\theoremstyle{plain}
\newtheorem{lemma}{Lemma}[section]
\newtheorem{prop}[lemma]{Proposition}
\newtheorem{theo}[lemma]{Theorem}
\newtheorem{coro}[lemma]{Corollary}
\theoremstyle{remark}
\newtheorem{rem}[lemma]{Remark}
\newtheorem*{notat}{Notation}
\theoremstyle{definition}
\newtheorem{definition}[lemma]{Definition}
\newtheorem{ex}[lemma]{Example}
\def\id{\mathrm{Id}}
\def\inc{\mathrm{in}}
\def\incl{\mathrm{inc}}
\def\K{\mathcal{K}}
\def\Ima{\mathrm{Im}\:}
\def\sgn{\mathrm{sgn}}
\def\N{\mathbb{N}}
\def\Z{\mathbb{Z}}
\def\rg{\mathrm{rg}}
\def\tq{\;/\;}
\def\ch{\mathrm{Ch}}
\begin{document}

\title{On homology of finite topological spaces}

\author[N. Cianci]{Nicol\'as Cianci}
\email{nicocian@gmail.com}

\author[M. Ottina]{Miguel Ottina}
\email{emottina@uncu.edu.ar}
\thanks{Research partially supported by grant M015 of SeCTyP, UNCuyo.}

\address{Facultad de Ciencias Exactas y Naturales \\
Universidad Nacional de Cuyo \\ Mendoza, Argentina.}

\begin{abstract}
We develop a new method to compute the homology groups of finite topological spaces (or equivalently of finite partially ordered sets) by means of spectral sequences giving a complete and simple description of the corresponding differentials. Our method proves to be powerful and involves far fewer computations than the standard one. We derive many applications of our technique which include a generalization of Hurewicz theorem for regular CW-complexes, results in homological Morse theory and formulas to compute the M\"obius function of posets.
\end{abstract}

\subjclass[2010]{55T05 (Primary) 55U10, 55U15 (Secondary)}
\keywords{Homology groups, Finite Topological Spaces, Posets, Spectral Sequences, Hurewicz Theorem, Homological Morse Theory, M\"obius function.}

\maketitle

\section{Introduction}

The interaction between topology and combinatorics has proved to be very fruitful. Examples of this interaction are simplicial homology, discrete Morse theory \cite{For, Min}, the celebrated proof of Kneser's conjecture given by Lov\'asz \cite{Lov}, the subsequent developments in the study of graph properties by means of topological methods \cite{Koz} and the theory of finite topological spaces, which has grown considerably in the last years from works by Barmak and Minian \cite{BarLN, BM1, BM2, BM3, BM4}.

The theory of finite topological spaces is based in the well-known correspondence between finite posets and finite $T_0$--spaces given by Alexandroff \cite{Alex} and in the works of Stong \cite{Sto} and McCord \cite{McC} who study finite spaces from totally different perspectives. Stong studies the homotopy types of finite topological spaces by means of an elementary move which consists of removing a single point of a finite space. Surprisingly, a sequence of these simple moves is enough to determine whether two given finite spaces have the same homotopy type. On the other hand, McCord establishes a correspondence that assigns to each finite $T_0$--space $X$ a simplicial complex $\mathcal{K}(X)$ together with a weak homotopy equivalence $\mathcal{K}(X)\to X$ and proves that the weak homotopy types of compact polyhedra are in one to one correspondence with the weak homotopy types of finite topological spaces.

Barmak and Minian delve deeply into this theory and obtain many interesting results, among which we mention the introduction of an elementary move in finite $T_0$--spaces which corresponds exactly with the elementary collapses of simple homotopy theory of compact polyhedra \cite{BM1} and a generalization of McCord's result on the weak equivalence between a compact polyhedron and its order complex \cite{BM2}. Moreover, they use the theory of finite spaces to study Quillen's conjecture on the poset of non-trivial $p$--subgroups of a group and Andrews--Curtis' conjecture (see \cite{BarLN}).

As it is shown by the recent works of Barmak and Minian, finite topological spaces can be used in several different situations to develop new tools and techniques to study topological and combinatorial problems. Moreover, in many of them the finite space approach is simpler, more adequate or more tractable than the one given by simplicial complexes and polyhedra. In a similar way, problems regarding homotopy invariants of finite topological spaces, which can be tackled by the simplicial complex approach, can be dealt with in a more direct and natural way in their own context.

It is this idea which is exploited in this article, where we develop a new method to compute the homology groups of finite topological spaces by means of spectral sequences. Not only do we give spectral sequences which converge to the homology groups of a given finite space but also we describe completely the differentials of all the pages of those spectral sequences. Our method proves to be powerful and involves far fewer computations than the standard one of computing the simplicial homology groups of the order complex of the finite space. Moreover, it can be applied to Alexandroff spaces provided that a suitable filtration exists.

As an application of this result, we give a spectral sequence which converges to the homology groups of the universal cover of a locally finite T$_0$--space which gives a method to compute the second homotopy group of such spaces. With this result we obtain a generalization of Hurewicz's theorem for a class of regular CW-complexes which are not necessarily simply-connected.

Moreover, we apply our results to generalize a result of Minian on homological Morse theory for posets \cite{Min}, largely extending the class of posets for which it is valid, with a very nice and conceptual proof. And we also apply our techniques to obtain different formulas to compute the M\"obius function of posets which include an alternative proof to a result of Bj\"orner and Walker \cite{BjoWal}.

\section{Preliminaries} \label{sect_prelim}

If $X$ is a finite $T_0$ topological space and $x\in X$, $U_x$ denotes the minimal open set which contains $x$, that is, the intersection of all the open sets of $X$ which contain $x$. In a similar way, $F_x$ denotes the minimal closed set which contains $x$, that is, $F_x=\overline{\{x\}}$.

An order relation can be defined in a finite $T_0$--space $X$ as follows: $x\leq y$ if and only if $U_x\subseteq U_y$. Conversely, if $P$ is a finite poset then the subsets $\{x\in P \tq x\leq a\}$, $a\in P$, form a basis for a topology on $P$. These applications are mutually inverse and give a one-to-one correspondence between finite $T_0$--spaces and finite posets \cite{Alex} (see also \cite{BarLN}). This correspondence extends to a one-to-one correspondence between Alexandroff $T_0$--spaces and posets \cite{McC}. 

Hence, from now on, we will see any Alexandroff $T_0$--space as a poset and any poset as an Alexandroff $T_0$--space without further notice. Note that locally finite topological spaces are Alexandroff spaces.

If $X$ is an Alexandroff $T_0$--space then $U_x=\{a\in X \tq a\leq x\}$ and $F_x=\{a\in X \tq a\geq x\}$. It is standard to define
$\hat{U}_x=\{a\in X \tq a< x\}$, $\hat{F}_x=\{a\in X \tq a> x\}$, $C_x=U_x\cup F_x$ and $\hat{C}_x=C_x-\{x\}$. In case several topological spaces are considered at the same time, we will denote $U_x$ by $U_x^X$ to indicate the space in which the minimal open set is considered. We will use similar notations for $F_x$, $C_x$, $\hat{U}_x$, $\hat{F}_x$ and $\hat{C}_x$.

Let $X$ be a finite $T_0$--space and let $x\in X$. The point $x$ is an \emph{up beat point} of $X$ if the subposet $\hat{F}_x$ has a minimum. The point $x$ is a \emph{down beat point} of $X$ if the subposet $\hat{U}_x$ has a maximum. The point $x$ is a \emph{beat point} of $X$ if it is either an up beat point or a down beat point. Stong proves in \cite{Sto} that if $x$ is a beat point of $X$ then $X-\{x\}$ is a strong deformation retract of $X$. Moreover, he gives a simple criterion to decide whether two given finite topological spaces are homotopy equivalent. Using the results of Stong it is easy to prove that if $X$ is a finite $T_0$--space and $x\in X$ then $C_x$ is contractible.

The \emph{order complex} of an Alexandroff $T_0$--space $X$ is the simplicial complex $\mathcal{K}(X)$ of the finite non-empty chains of $X$. McCord proves in \cite{McC} that there exists a weak homotopy equivalence $|\mathcal{K}(X)| \to X$ ($|\mathcal{K}(X)|$ denotes the geometric realization of $\mathcal{K}(X)$). He also proves in \cite{McC} that there exists a correspondence that assigns to each Alexandroff space $Z$ an Alexandroff $T_0$--space $\hat{Z}$ which is a quotient of $Z$ and which satisfies that the quotient map $Z\to\hat{Z}$ is a homotopy equivalence.

Also, the \emph{face poset} of a simplicial complex $K$ is the poset $\mathcal{X}(K)$ of simplices of $K$ ordered by inclusion. Clearly, $\mathcal{K}(\mathcal{X}(K))$ is the barycentric subdivision of $K$. This leads to the definition of the \emph{barycentric subdivision} of a poset $X$ as $X'=\mathcal{X}(\mathcal{K}(X))$. There is a weak homotopy equivalence $X'\to X$ which takes each nonempty chain of $X$ to its maximum \cite{Wal}.

The \emph{non-Hausdorff suspension} of a topological space $X$ is the space $\mathbb{S}(X)$ whose underlying set is $X\cup \{+,-\}$ and whose open sets are those of $X$ together with $X\cup\{+\}$, $X\cup\{-\}$ and $X\cup\{+,-\}$. This definition was introduced by McCord in \cite{McC}, where he proves that for every space $X$ there exists a weak homotopy equivalence between the suspension of $X$ and $\mathbb{S}(X)$.

A \emph{finite model} of a topological space $Z$ is a finite space which is weak homotopy equivalent to $Z$. For example, if $D_2$ is the discrete space of two points and $n\in\N$ then $\mathbb{S}^n D_2$ is a finite model of the $n$--sphere $S^n$.

If $X$ is a poset $X^\textnormal{op}$ will denote the poset $X$ with the inverse order.

Recall that a poset is \emph{homogeneous} of dimension $n$ if all its maximal chains have cardinality $n+1$. 

A poset $X$ is \emph{graded} if $U_x$ is homogeneous for all $x\in X$. In this case, the degree of $x$ is the dimension of $U_x$ and is denoted by $\deg(x)$.

The following definitions were introduced by Minian in \cite{Min}.

\begin{itemize}
\item A finite poset $X$ is called \emph{$h$--regular} if for every $x\in X$, the order complex of $\hat U_x$ is homotopy equivalent to $S^{n-1}$ where $n$ is the maximum of the cardinality of the chains in $\hat U_x$.
\item A \emph{cellular} poset is a graded poset $X$ such that for every $x\in X$, $\hat U_x$ has the homology of a $(p-1)$--sphere, where $p=\deg(x)$.
\end{itemize}

In a similar way we will say that a finite $T_0$--space is \emph{cellular} if its associated poset is cellular. Note that the barycentric subdivision of a poset is a cellular poset.

If $n\in\N_0$, an $n$--chain of $P$ is a chain of $P$ of cardinality $n+1$. The empty chain will be regarded as a $(-1)$--chain. We will use the notation $[v_0,\dots,v_n]$ for an $n$--chain $\{v_0,\dots,v_n\}$ of $P$ with $v_{j-1}<v_{j}$ for all $j\in\{1,2,\ldots,n\}$. Also, if $n\in\N_0$ and $s=[v_0,\dots,v_n]$ is an $n$--chain and $k\in\{0,\ldots,n\}$ then $s_{\hat k}$ will denote the $(n-1)$--chain $[v_0,\dots,\hat{v_k},\dots,v_n]$.

\begin{notat}
If $X$ is a poset, $\ch(X)$ will denote the set of chains of $X$ and, for $n\in\N_0$, $\ch_n(X)$ will denote the set of $n$--chains of $X$.
\end{notat}

From McCord's theorem it is clear that $H_n(X)=H_n(|\mathcal{K}(X)|)$. Hence, the homology groups of a finite topological space can be computed from the simplicial chain complex associated to $\mathcal{K}(X)$. This fact can be expressed entirely in terms of the finite space $X$ since the simplices of $\mathcal{K}(X)$ are the chains of $X$. Thus, making the translation to the context of posets, we can introduce the following definition which will be useful for our work.

\begin{definition}
Let $X$ be a finite $T_0$--space. The \emph{$f$--chain complex associated to $X$} is the chain complex $C^f(X)=(C^f_n(X),d^f_n)_{n\in\Z}$ defined by
\begin{displaymath}
C^f_n(X)=\left\{ 
\begin{array}{cl} 
\bigoplus\limits_\textnormal{$\ch_n(X)$} \Z & \textnormal{if $n\geq 0$} \\
0 & \textnormal{if $n<0$}
\end{array}
\right.
\end{displaymath}
and where, for $n\in \N$, the morphisms $d^f_n:C^f_n(X)\longrightarrow C^f_{n-1}(X)$ are defined by
$$d^f_n([v_0,\dots,v_n])=\sum\limits_{i=0}^n (-1)^i [v_0,\dots,\hat{v_i},\dots,v_n]$$
for all $[v_0,\dots,v_n]\in \ch_n(X)$.
\end{definition}

It is clear that the chain complex $C^f(X)$ is isomorphic to the simplicial chain complex of $\K(X)$.

\begin{definition}
Let $X$ be a finite $T_0$--space. For $n\in \Z$ we define $H^f_n(X)$ as the $n$--th homology group of the chain complex $C^f(X)$.
The group $H^f_n(X)$ will be called the \emph{$n$--th $f$--homology group of $X$}.
\end{definition}

It follows that if $X$ is a finite $T_0$--space then $H_n(X)=H^f_n(X)$ for all $n\in \Z$.

A similar translation can be made for the relative case. We include below the notations and definitions for this case so as to set the terminology that we will use.

\begin{definition}
Let $X$ be a finite $T_0$--space and let $A\subseteq X$. An \emph{$n$--chain of $(X,A)$} is an $n$--chain of $X$ which is not included in $A$. The set of $n$--chains of $(X,A)$ will be denoted by $\ch_n(X,A)$.
\end{definition}

\begin{definition}
Let $X$ be a finite $T_0$--space and let $A\subseteq X$. We define the \emph{$f$--relative chain complex associated to $(X,A)$} as the chain complex $C^f(X,A)=C^f(X)/C^f(A)$. We also define, for $n\in\Z$, $H^f_n(X,A)$ as the $n$--th homology group of the chain complex $C^f(X,A)$.
\end{definition}

Note that, for $n\in \N_0$, $C^f_n(X,A)\cong \bigoplus\limits_\textnormal{$\ch_n(X,A)$} \Z$.

Clearly, the chain complex $C^f(X,A)$ is isomorphic to the relative simplicial chain complex of $(\K(X),\K(A))$ and hence $H^f_n(X,A)=H_n(X,A)$ for all $n\in\Z$.

In a similar way, one can define reduced $f$--homology groups. In this case, the group in degree $-1$ of the corresponding chain complex will be the free abelian group generated by the empty chain.

\section{First main theorem} \label{main_section}

In this section we will develop a spectral sequence which converges to the homology groups of a given finite space and we will provide an explicit description of the differentials of all the pages of this spectral sequence. Moreover, we will prove that our result generalizes a cellular-type method of Minian \cite{Min}.

\begin{notat}
Let $X$ be a finite $T_0$--space, let $A\subseteq X$ and let $n\in \N_0$. If $\sigma\in C^f_n(X)$ then we will write $\overline{\sigma}^A$ (or simply $\overline{\sigma}$) for the class of $\sigma$ in $C^f_n(X,A)$. 

If $x\in X$ and $\sigma\in C^f_n(C_x)$, we will write $\overline{\sigma}^x$ for the class of $\sigma$ in $C^f_n(C_x,\hat{C}_x)$.

The class of the $n$--cycle $\sigma$ in $H^f_n(X)$ will be denoted by $[\sigma]$, and the class of the $n$--relative cycle $\overline{\sigma}^A$ in $H^f_n(X,A)$ will be denoted by $[\overline{\sigma}^A]$.
\end{notat}

If $I$ is a set, $H$ is a group, $\{G_i\}_{i\in I}$ is a collection of groups and $\{f_i:G_i\longrightarrow H\}_{i\in I}$ is a collection of group homomorphisms, we define $\biguplus\limits_{i\in I}f_i:\bigoplus\limits_{i\in I}G_i\longrightarrow H$ by $$\left(\biguplus\limits_{i\in I}f_i\right)((x_i)_{i\in I})=\sum\limits_{i\in I}f_i(x_i).$$

The following lemma, which will be used to prove proposition \ref{prop:+ hom C_x}, contains a simple idea which will be very important in our method.

\begin{lemma}\label{lemma:isos phi}
Let $X$ be a finite $T_0$--space and let $D\subseteq X$ be an antichain. For $x\in D$ and $n\in \N_0$ we define:
\begin{itemize}
\item $i^x_n:C^f_n(C_x,\hat{C}_x)\longrightarrow C^f_n(X,X-D)$ by $i^x_n(\overline{\sigma}^x)=\overline{\sigma}^{X-D}$ for every $\sigma\in C^f_n(C_x)$.
\item $\rho^x_n:C^f_n(X,X-D)\longrightarrow C^f_n(C_x,\hat{C}_x)$ as the group homomorphism that satisfies
$$\rho^x_n(\overline{s}^{X-D})=
\left\{
\begin{array}{ll}
\overline{s}^x& \text{if $x\in s$}\\
0& \text{if $x\not \in s$}\\
\end{array}
\right.
$$
for every $n$--chain $s$ in $X$.
\item $\phi_n:C^f_n(X,X-D)\longrightarrow \bigoplus\limits_{x\in D}C^f_n(C_x,\hat{C}_x)$  as the group homomorphism that satisfies
$$\phi_n(\overline{s}^{X-D})=(\rho^x_n(\overline{s}^{X-D}))_{x\in D}$$
for every $n$--chain $s$ in $X$.
\end{itemize}

Then $\phi_n$ is a group isomorphism and $\phi^{-1}_n=\biguplus\limits_{x\in D}i^x_n$ for every $n\in \N_0$.
\end{lemma}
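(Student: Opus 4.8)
The plan is to exhibit $\phi_n$ and $\biguplus_{x\in D}i^x_n$ as mutually inverse by a direct computation on the natural free-abelian-group bases, using the fact that $D$ is an antichain. First I would fix the bases: $C^f_n(X,X-D)$ is free abelian on the set $\ch_n(X,X-D)$ of $n$-chains of $X$ that are \emph{not} contained in $X-D$, i.e. those $n$-chains $s$ with $s\cap D\neq\emptyset$; and since $D$ is an antichain, such a chain $s$ meets $D$ in \emph{exactly one} point, say $x(s)\in D$. Likewise, for each $x\in D$, $C^f_n(C_x,\hat C_x)$ is free abelian on the $n$-chains of $C_x$ containing $x$ (the $n$-chains of $C_x$ not contained in $\hat C_x$ are precisely those through $x$). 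Observe that an $n$-chain of $C_x$ through $x$ is the same thing as an $n$-chain $s$ of $X$ with $s\cap D=\{x\}$: indeed any chain through $x$ lies in $U_x\cup F_x=C_x$. So the basis of $C^f_n(X,X-D)$ is partitioned, according to which point of $D$ it contains, into the bases of the summands $C^f_n(C_x,\hat C_x)$.

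Next I would check the three maps are well defined. For $i^x_n$: the assignment $\overline\sigma^x\mapsto\overline\sigma^{X-D}$ sends $C^f_n(\hat C_x)$ into $C^f_n(X-D)$ because $\hat C_x\subseteq X-D$ (any point of $C_x$ other than $x$ is comparable to $x$, hence not in $D$ since $D$ is an antichain), so it descends to the quotient; it is a homomorphism between free abelian groups specified on generators, hence well defined. For $\rho^x_n$ and $\phi_n$: these are defined by their values on the basis $\{\overline s^{X-D}\}$, so they are automatically well-defined homomorphisms once one notes that the case split ``$x\in s$ / $x\notin s$'' makes sense on basis elements; and when $x\in s$, the chain $s$ lies in $C_x$ so $\overline s^x$ is meaningful. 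It is worth recording that $\rho^x_n\circ i^x_n=\id$ and $\rho^x_n\circ i^y_n=0$ for $x\neq y$ in $D$ (using that a chain through $x$ cannot also pass through $y$), which is the relation that makes the two composites collapse.

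Then the two verifications: First, $\phi_n\circ\bigl(\biguplus_{x\in D}i^x_n\bigr)=\id$. Evaluate on a tuple $(\tau_x)_{x\in D}$ with $\tau_x\in C^f_n(C_x,\hat C_x)$; by linearity reduce to a single basis element $\tau_y=\overline s^y$ in one slot and $0$ elsewhere. Then $\bigl(\biguplus i^x_n\bigr)$ sends it to $\overline s^{X-D}$, and applying $\phi_n$ gives the tuple $(\rho^x_n(\overline s^{X-D}))_{x\in D}$, which by the relations above equals $\overline s^y$ in the $y$-slot and $0$ elsewhere — the original tuple. Second, $\bigl(\biguplus_{x\in D}i^x_n\bigr)\circ\phi_n=\id$. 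Evaluate on a basis element $\overline s^{X-D}$ of $C^f_n(X,X-D)$; since $s$ meets $D$ in exactly one point $x_0=x(s)$, we get $\phi_n(\overline s^{X-D})=(\rho^x_n(\overline s^{X-D}))_{x\in D}$, which is $\overline s^{x_0}$ in the $x_0$-slot and $0$ elsewhere; applying $\biguplus i^x_n$ returns $i^{x_0}_n(\overline s^{x_0})=\overline s^{X-D}$. Hence $\phi_n$ is an isomorphism with the stated inverse.

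The only real subtlety — and the step I would be most careful about — is the bookkeeping around the antichain hypothesis: one must use that $D$ being an antichain forces every relative $n$-chain to hit $D$ in at most one point, and that $\hat C_x\subseteq X-D$, so that the bases really do match up without overlap and the maps land where claimed. Everything else is a routine check on generators of free abelian groups, so I would not belabor it.
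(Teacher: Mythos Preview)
Your proposal is correct and follows essentially the same approach as the paper: both arguments hinge on the observation that, because $D$ is an antichain, every $n$--chain of $(X,X-D)$ contains exactly one element of $D$, which makes the bases match up. The only cosmetic difference is that you verify both composites $\phi_n\circ\biguplus i^x_n$ and $\biguplus i^x_n\circ\phi_n$ directly on generators, whereas the paper checks only the first composite and pairs it with the separate observation that $\biguplus i^x_n$ is surjective to conclude it is an isomorphism.
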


\begin{proof}
It is easy to check that the morphisms above are well-defined.

For each $n\in \N_0$ and for each $x_0\in D$, let $\inc^{x_0}_n:C^f_n(C_{x_0},\hat{C}_{x_0})\longrightarrow \bigoplus\limits_{x\in D}C^f_n(C_x,\hat{C}_x)$ be the canonical inclusion.

Let $n\in \N_0$. It is clear that $\biguplus\limits_{x\in D}i^x_n$ is an epimorphism since every $n$--chain of $(X,X-D)$ is an $n$--chain of $(C_x,\hat{C}_x)$ for some $x\in D$. Besides, $\phi_n i^x_n(s)=\inc^x_n(s)$ for every $x\in D$ and for every $n$--chain $s$ in $(C_x,\hat{C}_x)$ since every $n$--chain of $(X,X-D)$ must have exactly one element of $D$, as $D$ is an antichain. Therefore, $\phi_n i^x_n=\inc^x_n$.

Then, $\phi_n\circ\biguplus\limits_{x\in D}i^x_n=\biguplus\limits_{x\in D}(\phi_n i^x_n)=\biguplus\limits_{x\in D}\inc^x_n=\id$. Hence, $\biguplus\limits_{x\in D}i^x_n$ is a monomorphism. Thus, $\biguplus\limits_{x\in D}i^x_n$ is an isomorphism with inverse $\phi_n$.
\end{proof}

\begin{prop}\label{prop:+ hom C_x}
Let $X$ be a finite $T_0$--space and let $D$ be an antichain in $X$. Then $H_n(X,X-D)\cong\bigoplus\limits_{x\in D}\tilde{H}_{n-1}(\hat{C}_x)$ for every $n\in \Z$.
\end{prop}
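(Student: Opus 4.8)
The plan is to promote the group isomorphisms $\phi_n$ of Lemma \ref{lemma:isos phi} to an isomorphism of chain complexes, and then to compute each summand $H_n(C_x,\hat{C}_x)$ by means of the long exact sequence of the pair $(C_x,\hat{C}_x)$ together with the contractibility of $C_x$ recalled in Section \ref{sect_prelim}.

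First I would check that $\phi_\bullet\colon C^f(X,X-D)\to\bigoplus_{x\in D}C^f(C_x,\hat{C}_x)$ commutes with the differentials, where the target carries the direct sum of the relative differentials. The crucial observation — and the place where the antichain hypothesis does the work — is that every $n$--chain $s$ of $(X,X-D)$ contains exactly one element of $D$, say $x$; since all vertices of $s$ are then comparable with $x$, we have $s\subseteq C_x$. Consequently, writing $s=[v_0,\dots,v_n]$, the class $\overline{s_{\hat k}}^{X-D}$ vanishes precisely when $v_k=x$ (because then $s_{\hat k}\subseteq X-D$), and likewise $\overline{s_{\hat k}}^{x}$ vanishes precisely when $v_k=x$ (because then $s_{\hat k}\subseteq\hat{C}_x$). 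Hence on both sides the boundary of the class of $s$ is represented by $\sum_{k:\,v_k\neq x}(-1)^k\,\overline{s_{\hat k}}$, read in the appropriate quotient, and one checks directly from the definition of $\phi_\bullet$ that it identifies these. By Lemma \ref{lemma:isos phi}, $\phi_\bullet$ is then an isomorphism of chain complexes, so passing to homology and using $H^f_\ast=H_\ast$ gives
\[
H_n(X,X-D)\ \cong\ \bigoplus_{x\in D}H_n(C_x,\hat{C}_x)\qquad\text{for all }n\in\Z .
\]

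It then remains to identify each summand with $\tilde{H}_{n-1}(\hat{C}_x)$. For this I would invoke the long exact sequence in reduced homology of the pair $(C_x,\hat{C}_x)$,
\[
\cdots\longrightarrow\tilde{H}_n(C_x)\longrightarrow H_n(C_x,\hat{C}_x)\longrightarrow\tilde{H}_{n-1}(\hat{C}_x)\longrightarrow\tilde{H}_{n-1}(C_x)\longrightarrow\cdots ,
\]
and recall that $C_x$ is contractible, so $\tilde{H}_\ast(C_x)=0$ and the connecting homomorphism gives an isomorphism $H_n(C_x,\hat{C}_x)\cong\tilde{H}_{n-1}(\hat{C}_x)$. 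The only degenerate case is $\hat{C}_x=\emptyset$, i.e. $x$ an isolated point, which is handled directly: $H_n(C_x,\hat{C}_x)=H_n(\{x\})$ and $\tilde{H}_{n-1}(\emptyset)$ agree, both being $\Z$ for $n=0$ and $0$ otherwise. Combining the two displayed facts yields the proposition. The hard part will be the chain-map verification of the first step: one must be careful that the ``delete $x$'' term of the boundary dies on both sides of $\phi_\bullet$ and that the remaining terms correspond, all of which rests on $D$ being an antichain; once $\phi_\bullet$ is known to be a chain isomorphism, the rest is formal.
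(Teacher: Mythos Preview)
Your proposal is correct and follows essentially the same approach as the paper: the paper also establishes that $\biguplus_{x\in D} i^x_\bullet$ (the inverse of your $\phi_\bullet$) is an isomorphism of chain complexes between $\bigoplus_{x\in D}C^f(C_x,\hat C_x)$ and $C^f(X,X-D)$, and then uses the contractibility of $C_x$ to identify $H_n(C_x,\hat C_x)\cong\tilde H_{n-1}(\hat C_x)$. Your verification of the chain-map property is more explicit than the paper's (which simply asserts the commutative square), but the argument is the same.
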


\begin{proof}
For each $n\in \Z$ we have a commutative diagram
\[
\xymatrix@C=80pt@R=60pt{C^f_n(X,X-D)\ar[r]^{\overline{d}^f_n}&C^f_{n-1}(X,X-D)\\
\bigoplus\limits_{x\in D}C^f_n(C_x,\hat{C}_x)\ar[u]^{\biguplus\limits_{x\in D}i^x_n}\ar[r]_{\bigoplus\limits_{x\in D}(\overline{d}^f_n)_x}&\bigoplus\limits_{x\in D}C^f_{n-1}(C_x,\hat{C}_x)\ar[u]^{\biguplus\limits_{x\in D}i^x_{n-1}}}
\]
where for every $x\in D$ the morphism $(\overline{d}^f_n)_x$ is the restriction of $\overline{d}^f_n$ to $C^f_n(C_x,\hat{C}_x)$.

Then, the chain complexes $C^f(X,X-D)$ and $\bigoplus\limits_{x\in D}C^f(C_x,\hat{C}_x)$ are isomorphic and therefore
$$H_n(X,X-D) \cong H^f_n(X,X-D) \cong \bigoplus\limits_{x\in D}H^f_n(C_x,\hat{C}_x) \cong \bigoplus\limits_{x\in D}H_n(C_x,\hat{C}_x)$$ for every $n\in \Z$.

Now, since $C_x$ is contractible, $H_n(C_x,\hat{C}_x) \cong \tilde{H}_{n-1}(\hat{C}_x)$. The result follows.
\end{proof}

\begin{rem}
In the previous lemma we allow $\hat{C}_x$ to be empty, in which case $\widetilde{H}_{-1}(\hat{C}_x)=\Z$ and $\widetilde{H}_n(\hat{C}_x)=0$ for $n\neq -1$.
\end{rem}

\begin{definition}
Let $X$ be a finite $T_0$--space. Let $n\in \N_0$ and let $x\in X$. Let $s=[y_0,\dots,y_n]$ be an $n$--chain in $(C_x,\hat{C}_x)$. Let $k^s_x$ denote the only integer $i\in \{0,\dots,n\}$ such that $y_i=x$. We define the \emph{sign} of $x$ in $s$ by $\sgn_s(x)=(-1)^{k^s_x}$.
\end{definition}

\begin{lemma}\label{lemma:partial}
Let $X$ be a finite $T_0$--space, let $x\in X$ and let $n\in \N_0$.

Let $\partial:H_n(C_x,\hat{C}_x)\longrightarrow \tilde{H}_{n-1}(\hat{C}_x)$ be the connection homomorphism of the long exact sequence associated to the finite chain complex of $(C_x,\hat{C}_x)$.

Let $\sigma=\sum\limits_{i=1}^l \alpha_i s_i+\sum\limits_{j=1}^m \beta_j t_j\in C^f_n(C_x)$, where $l,m\in \N$, $\alpha_i\in \Z$ for every $i=1,\dots,l$, $\beta_i\in \Z$ for every $j=1,\dots,m$ and where for every $i=1,\dots,l$, $s_i$ is an $n$--chain in $C_x$ such that $x\in s_i$, and for every $j=1,\dots,m$, $t_j$ is an $n$--chain of $C_x$ such that $x\not\in t_j$.

If $\overline{\sigma}\in \ker\overline{d}^f_n$, then $\partial([\overline{\sigma}])=\left[\sum\limits_{i=1}^l \alpha_i \sgn_{s_i}(x)(s_i-\{x\})\right]$.
\end{lemma}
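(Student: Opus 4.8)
The plan is to reduce the statement to the standard chain-level description of the connecting homomorphism and then to an explicit computation of $d^f_n(\sigma)$. Recall that $\partial\colon H_n(C_x,\hat C_x)\to\tilde H_{n-1}(\hat C_x)$ is obtained by applying the snake lemma to the short exact sequence of chain complexes
\[
0\longrightarrow C^f(\hat C_x)\longrightarrow C^f(C_x)\longrightarrow C^f(C_x,\hat C_x)\longrightarrow 0
\]
(with the augmented complexes of $C_x$ and $\hat C_x$ in the relevant degrees), so that whenever a relative $n$--cycle is represented by some $\sigma\in C^f_n(C_x)$ --- which forces $d^f_n(\sigma)\in C^f_{n-1}(\hat C_x)$ --- one has $\partial([\overline{\sigma}^x])=[d^f_n(\sigma)]$ in $\tilde H_{n-1}(\hat C_x)$. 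Thus the whole lemma amounts to computing $d^f_n(\sigma)$ and simplifying it modulo boundaries of $\hat C_x$.

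First I would expand $d^f_n(\sigma)=\sum_{i=1}^l\alpha_i\,d^f_n(s_i)+\sum_{j=1}^m\beta_j\,d^f_n(t_j)$ and, for each $s_i=[y_0,\dots,y_n]$ with $y_{k^{s_i}_x}=x$, separate in $d^f_n(s_i)$ the face obtained by deleting the vertex in position $k^{s_i}_x$ from the remaining faces. That distinguished face is precisely $s_i-\{x\}$ and it occurs with coefficient $(-1)^{k^{s_i}_x}=\sgn_{s_i}(x)$, while all other faces $(s_i)_{\hat p}$ with $p\neq k^{s_i}_x$ still contain $x$; the faces of every $t_j$ contain no vertex equal to $x$ either. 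Hence $d^f_n(\sigma)$ decomposes as an ``$x$--containing part'' $P:=\sum_i\alpha_i\sum_{p\neq k^{s_i}_x}(-1)^p(s_i)_{\hat p}\in C^f_{n-1}(C_x)$ plus a remainder $\sum_i\alpha_i\sgn_{s_i}(x)(s_i-\{x\})+d^f_n\!\big(\sum_j\beta_j t_j\big)$ that lies in $C^f_{n-1}(\hat C_x)$.

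Next I would feed in the hypothesis. The quotient map $C^f_{n-1}(C_x)\to C^f_{n-1}(C_x,\hat C_x)$ kills exactly the chains not containing $x$ and is injective on the subgroup generated by the chains that do contain $x$; since $\overline{d}^f_n(\overline{\sigma}^x)=0$ is the image of $d^f_n(\sigma)$, and only $P$ survives under this map, we conclude $P=0$ in $C^f_{n-1}(C_x)$. Therefore $d^f_n(\sigma)=\sum_i\alpha_i\sgn_{s_i}(x)(s_i-\{x\})+d^f_n\!\big(\sum_j\beta_j t_j\big)$; as $\sum_j\beta_j t_j\in C^f_n(\hat C_x)$ the last summand is a boundary of $\hat C_x$, so $\sum_i\alpha_i\sgn_{s_i}(x)(s_i-\{x\})$ is a cycle of $C^f(\hat C_x)$ whose class in $\tilde H_{n-1}(\hat C_x)$ equals $[d^f_n(\sigma)]=\partial([\overline{\sigma}^x])$, which is the assertion.

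The calculation is routine; the only point needing a little attention --- the ``obstacle'', such as it is --- is the low-degree bookkeeping ensuring that $\partial$ genuinely lands in the \emph{reduced} homology $\tilde H_{n-1}(\hat C_x)$: one must work with the augmented complexes and check the case $n=0$ together with the possibility $\hat C_x=\emptyset$ noted after Proposition \ref{prop:+ hom C_x} (there $n=0$, the only chain $s_i$ is $[x]$, $s_i-\{x\}$ is the empty chain, $\sgn_{s_i}(x)=1$, and $\tilde H_{-1}(\hat C_x)=\Z$), but in all cases the argument above goes through verbatim.
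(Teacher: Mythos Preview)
Your argument is correct and follows essentially the same route as the paper: both invoke the snake-lemma description of $\partial$, split $d^f_n$ of the lift into the face that drops $x$ and the remaining $x$-containing faces, and use that the latter must vanish because the boundary lies in $C^f_{n-1}(\hat C_x)$. The only organisational difference is that the paper first replaces $\sigma$ by $\tau=\sum_i\alpha_i s_i$ (since $\overline{\sigma}=\overline{\tau}$) and computes $d^f_n(\tau)$ exactly, whereas you carry the $t_j$'s along and dispose of $d^f_n(\sum_j\beta_j t_j)$ as a boundary in $\hat C_x$ at the end; the two are equivalent.
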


\begin{proof}
Let $\tau=\sum\limits_{i=1}^l \alpha_i s_i$. Note that
$$\overline{\sigma}=\overline{\sum\limits_{i=1}^l \alpha_i s_i}+\overline{\sum\limits_{j=1}^m \beta_j t_j}=\overline{\sum\limits_{i=1}^l \alpha_i s_i}=\overline{\tau}$$
in $C^f_n(C_x,\hat{C}_x)$ since $\sum\limits_{j=1}^m \beta_j t_j\in C^f_n(\hat{C}_x)$.

By the proof of the Snake Lemma, $d^f_n(\tau)\in C^f_{n-1}(\hat{C}_x)$ and $\partial([\overline{\sigma}])=\partial([\overline{\tau}])$ is the class of $d^f_n(\tau)$ in $H^f_{n-1}(\hat{C}_x)$. On the other hand
$$d^f_n(\tau)=\sum\limits_{i=1}^l\alpha_i \left(\sum\limits_{k=0}^n (-1)^k (s_i)_{\hat{k}}\right)=\sum\limits_{i=1}^l \alpha_i \sgn_{s_i}(x)(s_i-\{x\})+\sum\limits_{i=1}^l\alpha_i \left(\sum\limits_{k\neq k^{s_i}_x} (-1)^k (s_i)_{\hat{k}}\right).$$

Now, note that $\sum\limits_{i=1}^l \alpha_i\sgn_{s_i}(x)(s_i-\{x\})\in C^f_{n-1}(\hat{C}_x)$, since it is a sum of $(n-1)$--chains in $C_x$ that do not contain $x$. Since $d^f_n(\tau)\in C^f_{n-1}(\hat{C}_x)$, then $$\sum\limits_{i=1}^l\alpha_i \left(\sum\limits_{k\neq k^{s_i}_x} (-1)^k (s_i)_{\hat{k}}\right)\in C^f_{n-1}(\hat{C}_x)$$ 
But for every $i=1,\dots,l$, $k\neq k^{s_i}_x$ implies that $x\in(s_i)_{\hat{k}}$. Then, $\sum\limits_{k\neq k^{s_i}_x} (-1)^k (s_i)_{\hat{k}}$ is a sum of chains that contain $x$. Since $C^f_{n-1}(C_x)$ is a free abelian group, $\sum\limits_{i=1}^l \alpha_i\left(\sum\limits_{k\neq k^{s_i}_x} (-1)^k (s_i)_{\hat{k}}\right)=0$. Hence, 
$$d^f_n(\tau)=\sum\limits_{i=1}^l\alpha_i\sgn_{s_i}(x)(s_i-\{x\}),$$
and therefore,
$$\partial([\overline{\sigma}])=[d^f_n(\tau)]=\left[\sum\limits_{i=1}^l \alpha_i \sgn_{s_i}(x)(s_i-\{x\})\right].$$
\end{proof}

\begin{definition}
Let $X$ be a finite $T_0$--space and let $\mathscr{F}=\{X_p:p\in \Z\}$ be a filtration of $X$. We say that the filtration $\mathscr{F}$ is \emph{induced by antichains} if $X_{-1}=\varnothing$ and $X_n-X_{n-1}$ is an antichain for every $n\in \N$.

Note that the subposet $X_0$ needs not be an antichain.
\end{definition}

\bigskip

The following is one of the main theorems of this article.

\begin{theo} \label{theo:principal}
Let $X$ be a finite $T_0$--space and let $\{X_p:p\in \Z\}$ be a filtration of $X$ which is induced by antichains.
For each $p\in \N$, let $D_p=X_p-X_{p-1}$. 

Then there is a spectral sequence $\{(E^r_{p,q})_{p,q\in \Z},(d^r_{p,q})_{p,q\in \Z}\}_{r\in \N}$ that converges to $H_*(X)$ such that:
\begin{itemize}
\item $E^1_{p,q}=0$ for every $p\leq -1$.
\item $E^1_{0,q}=H_q(X_0)$.
\item $E^1_{p,q}=\bigoplus\limits_{x\in D_p}\tilde{H}_{p+q-1}(\hat{C}^{X_p}_x)$ for $p\geq 1$.
\item The morphisms $d^1_{p,q}:E^1_{p,q}\longrightarrow E^1_{p-1,q}$ are defined in the following way:
\begin{itemize}
\item If $p\leq 0$ and $q\in \Z$, then $d^1_{p,q}$ is the trivial homomorphism.
\item If $p=1$ and $q\in \N_0$, then $d^1_{p,q}:\bigoplus\limits_{x\in D_1}\tilde{H}_q(\hat{C}^{X_1}_x)\longrightarrow H_q(X_0)$ is defined by $$d^1_{1,q}(([\sigma_x])_{x\in D_1})=\sum\limits_{x\in D_1}[\sigma_x]$$.
\item If $p\geq 1$ and $q\leq -p$, then $d^1_{p,q}$ is the trivial homomorphism.
\item If $p\geq 2$ and $q\geq 1-p$, then $d^1_{p,q}:\bigoplus\limits_{x\in D_p}\tilde{H}_{p+q-1}(\hat{C}^{X_p}_x)\longrightarrow\bigoplus\limits_{y\in D_{p-1}}\tilde{H}_{p+q-2}(\hat{C}^{X_{p-1}}_y)$ is defined by
$$d^1_{p,q}\left(\left(\left[\sum\limits_{i=1}^{l_x}a^x_i s^x_i\right]\right)_{x\in D_p}\right)=\left(\left[\sum\limits_{x\in D_p}\sum\limits_{s^x_i\ni y}a^x_i\sgn_{s^x_i}(y)(s^x_i-\{y\})\right]\right)_{y\in D_{p-1}}$$ 
where for every $x\in D_p$, $l_x\in \N$, and for every $i=\{1,\dots,l_x\}$, $a^x_i\in \Z$ and $s^x_i\in \tilde{C}_{p+q-1}(\hat{C}^{X_p}_x)$.
\end{itemize}
\end{itemize}
\end{theo}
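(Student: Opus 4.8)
I would apply the standard construction of the spectral sequence of a filtered chain complex to the filtration $F_pC^f(X)=C^f(X_p)$ of $C^f(X)$, $p\in\Z$. Since $X$ is finite and $\{X_p\}$ is an exhausting filtration with $X_{-1}=\varnothing$, one has $F_pC^f(X)=0$ for $p\le-1$ and $F_pC^f(X)=C^f(X)$ for $p$ large, so the filtration is bounded and the associated homological spectral sequence converges to $H^f_*(X)=H_*(X)$. With the usual conventions $E^0_{p,q}=F_pC^f_{p+q}(X)/F_{p-1}C^f_{p+q}(X)=C^f_{p+q}(X_p,X_{p-1})$, hence $E^1_{p,q}=H^f_{p+q}(X_p,X_{p-1})=H_{p+q}(X_p,X_{p-1})$, and $d^1_{p,q}\colon E^1_{p,q}\to E^1_{p-1,q}$ is the connecting homomorphism of the short exact sequence of complexes
\[
0\to C^f(X_{p-1},X_{p-2})\to C^f(X_p,X_{p-2})\to C^f(X_p,X_{p-1})\to0
\]
of the triple $(X_p,X_{p-1},X_{p-2})$; as is well known this connecting homomorphism factors as $H_{p+q}(X_p,X_{p-1})\xrightarrow{\,\partial\,}H_{p+q-1}(X_{p-1})\xrightarrow{\,j_*\,}H_{p+q-1}(X_{p-1},X_{p-2})$, where $\partial$ is the connecting homomorphism of the pair $(X_p,X_{p-1})$ and $j_*$ is induced by the inclusion.

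\textbf{The $E^1$ page.} If $p\le-1$ then $X_p=\varnothing$ and $E^1_{p,q}=0$; if $p=0$ then $X_{-1}=\varnothing$ and $E^1_{0,q}=H_q(X_0)$; and if $p\ge1$ then $D_p=X_p-X_{p-1}$ is an antichain in $X_p$ with $X_{p-1}=X_p-D_p$, so Proposition~\ref{prop:+ hom C_x}, applied inside $X_p$, gives $E^1_{p,q}=H_{p+q}(X_p,X_{p-1})\cong\bigoplus_{x\in D_p}\tilde H_{p+q-1}(\hat C^{X_p}_x)$. I would keep track of the precise form of this isomorphism as it arises from the proof of Proposition~\ref{prop:+ hom C_x} together with Lemma~\ref{lemma:isos phi}: on the $x$-summand it equals $i^x_*\circ\partial_x^{-1}$, where $\partial_x\colon H_{p+q}(C^{X_p}_x,\hat C^{X_p}_x)\to\tilde H_{p+q-1}(\hat C^{X_p}_x)$ is the connecting homomorphism of the pair $(C^{X_p}_x,\hat C^{X_p}_x)$ (an isomorphism since $C^{X_p}_x$ is contractible) and $i^x_*$ is induced by the inclusion of pairs $(C^{X_p}_x,\hat C^{X_p}_x)\hookrightarrow(X_p,X_{p-1})$ --- the latter is legitimate because $\hat C^{X_p}_x\subseteq X_{p-1}$, every element of $\hat C^{X_p}_x$ being comparable with $x\in D_p$ in the antichain $D_p$.

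\textbf{The differentials $d^1$.} The degenerate cases are immediate: if $p\le0$ the source or the target of $d^1_{p,q}$ vanishes, and if $p\ge1$ and $q\le-p$ the target vanishes (for $p\ge2$ because $\tilde H_{p+q-2}=0$ in negative degrees, for $p=1$ because $H_q(X_0)=0$ for $q<0$). In the remaining cases I would substitute the factorization $d^1_{p,q}=j_*\circ\partial$ of the first paragraph into the identification of the second. By naturality of the connecting homomorphism applied to the inclusion of pairs $(C^{X_p}_x,\hat C^{X_p}_x)\hookrightarrow(X_p,X_{p-1})$ one gets $\partial\circ i^x_*=\iota_*\circ\partial_x$ with $\iota\colon\hat C^{X_p}_x\hookrightarrow X_{p-1}$, hence $\partial\circ i^x_*\circ\partial_x^{-1}=\iota_*$; that is, under the identification of $E^1_{p,q}$ with $\bigoplus_{x\in D_p}\tilde H_{p+q-1}(\hat C^{X_p}_x)$ the homomorphism $\partial$ is, summand by summand, simply the map induced by the inclusion of $\hat C^{X_p}_x$ into $X_{p-1}$, the two connecting homomorphisms cancelling (which is why no sign of $x$ occurs). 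For $p\ge2$ it then remains to postcompose with $j_*\colon H_{p+q-1}(X_{p-1})\to H_{p+q-1}(X_{p-1},X_{p-2})$ and with the isomorphism of Proposition~\ref{prop:+ hom C_x} for $(X_{p-1},X_{p-1}-D_{p-1})$, which by Lemma~\ref{lemma:isos phi} is $\phi$ followed, on the $y$-summand, by $\partial_y$ for $y\in D_{p-1}$. Since $z_x:=\sum_i a^x_i s^x_i$ is a cycle of $C^f(X_{p-1})$, one has $j_*\iota_*[z_x]=[\overline{z_x}^{X_{p-2}}]$ and $\rho^y(\overline{z_x}^{X_{p-2}})=\sum_{s^x_i\ni y}a^x_i\,\overline{s^x_i}^{\,y}$, and Lemma~\ref{lemma:partial}, applied to the point $y$ and the chain complex of $(C^{X_{p-1}}_y,\hat C^{X_{p-1}}_y)$, identifies $\partial_y$ of its class with $\bigl[\sum_{s^x_i\ni y}a^x_i\sgn_{s^x_i}(y)(s^x_i-\{y\})\bigr]$; adding up the contributions over $x\in D_p$ gives the formula in the statement. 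For $p=1$ the target $H_q(X_0)$ carries no further antichain decomposition, $j_*$ is the identity, and the same reasoning yields $d^1_{1,q}(([\sigma_x])_{x\in D_1})=\sum_{x\in D_1}[\sigma_x]$, each $\sigma_x$ being read as a cycle of $X_0$ via $\hat C^{X_1}_x\subseteq X_0$.

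\textbf{Main obstacle.} The hard part is this last step: invoking the standard identification of $d^1$ with the triple connecting homomorphism with the correct indexing conventions, and then checking rigorously --- via naturality of $\partial$ and via Lemma~\ref{lemma:partial} --- that the two connecting maps cancel, so that $d^1$ is on each summand induced merely by an inclusion of subspaces, and that the signs that survive are precisely $\sgn_{s^x_i}(y)$. The manipulations with the $f$-chains and with the homomorphisms $\phi$, $\rho^y$, $i^x$, $\partial_x$, $\partial_y$ of Lemmas~\ref{lemma:isos phi} and~\ref{lemma:partial} are routine but must be carried out with care.
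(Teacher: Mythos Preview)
Your proposal is correct and follows essentially the same route as the paper's own proof: both construct the spectral sequence of the filtration $\{X_p\}$, identify $E^1_{p,q}=H_{p+q}(X_p,X_{p-1})$ with $\bigoplus_{x\in D_p}\tilde H_{p+q-1}(\hat C^{X_p}_x)$ via Proposition~\ref{prop:+ hom C_x} and Lemma~\ref{lemma:isos phi} (tracking the isomorphism as $i^x_*\circ\partial_x^{-1}$), use naturality of the connecting homomorphism for the inclusion of pairs $(C^{X_p}_x,\hat C^{X_p}_x)\hookrightarrow(X_p,X_{p-1})$ to reduce $d^1$ on each summand to the map induced by $\hat C^{X_p}_x\hookrightarrow X_{p-1}$, and then for $p\ge2$ apply $\phi$, $\rho^y$ and Lemma~\ref{lemma:partial} to obtain the explicit formula with the signs $\sgn_{s^x_i}(y)$. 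The only small point the paper makes explicit that you gloss over is the case $n=1$ in the naturality step, where one must observe that the reduced connecting map $H_1(X_1,X_0)\to\tilde H_0(X_0)$ is the range restriction of the unreduced one, so naturality still holds.
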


\begin{proof}
Let $\{(\tilde{E}^r_{p,q})_{p,q\in \Z},(\tilde{d}^r_{p,q})_{p,q\in \Z}\}_{r\in \N}$ be the bigraded spectral sequence associated to the filtration $\{X_p\}_{p\in \Z}$ of $X$, that is,
\begin{itemize}
\item $\tilde{E}^1_{p,q}=H_{p+q}(X_p,X_{p-1})$ for every $p,q\in \Z$ 
\item $\tilde{d}^1_{p,q}=j_* \partial$
\end{itemize}
where $j_*$ is the homomorphism induced in the homology groups by the projection $j:C^f_{p+q-1}(X_{p-1})\longrightarrow C^f_{p+q-1}(X_{p-1},X_{p-2})$ and where $\partial:H_{p+q}(X_p,X_{p-1})\longrightarrow \tilde{H}_{p+q-1}(X_{p-1})$ is the connection homomorphism of the long exact sequence associated to the pair $(X_p,X_{p-1})$.

Since $X$ is finite and $X_p=\varnothing$ for every $p\leq -1$, it follows that the spectral sequence $\{(\tilde{E}^r_{p,q})_{p,q\in \Z},(\tilde{d}^r_{p,q})_{p,q\in \Z}\}_{r\in \N}$ converges to $H_*(X)$.

For $p,q\in \Z$ we define:
\begin{itemize}
\item $E^1_{p,q}=0$ if $p\leq -1$.
\item $E^1_{0,q}=H_q(X_0)$.
\item $E^1_{p,q}=\bigoplus\limits_{x\in D_p}\tilde{H}_{p+q-1}(\hat{C}^{X_{p}}_x)$ if $p\geq 1$.
\end{itemize}

Since the filtration $\{X_p:p\in \Z\}$ is induced by antichains, $D_p$ is an antichain for every $p\in \N$. Thus, by the proof of \ref{prop:+ hom C_x}, for each $p\geq 1$ and for all $q\in\Z$ we have isomorphisms
$$\left(\biguplus\limits_{x\in D_p}i^x_n\right)_\ast\circ\left(\bigoplus\limits_{x\in D_p}\partial^x\right)^{-1}:\bigoplus\limits_{x\in D_p}\tilde{H}_{p+q-1}(\hat{C}^{X_{p}}_x)\longrightarrow H_{p+q}(X_p,X_{p-1})$$
where $\partial^x:H_{p+q}(C_x^{X_p},\hat C_x^{X_p})\to \tilde{H}_{p+q-1}(\hat C_x^{X_p})$ is the connection homomorphism of the corresponding long exact sequence.

On the other hand, we have that $E^1_{p,q}=\tilde{E}^1_{p,q}$ for $p\leq 0$. So we have group isomorphisms $\theta_{p,q}:E^1_{p,q}\longrightarrow \tilde{E}^1_{p,q}$ for every $p,q\in \Z$. We define $d^1_{p,q}=\theta^{-1}_{p-1,q}\circ\tilde{d}^1_{p,q}\circ\theta_{p,q}$ for all $p,q\in \Z$.

Let $n\in \Z$. We have a diagram
\[
\xymatrix@C=80pt@R=60pt{H_n(X_1,X_0)\ar@/^2pc/[rr]^-{\tilde{d}^1_{1,n-1}}\ar[r]^-{\partial}&H_{n-1}(X_0)\ar[r]^-{j_*}&H_{n-1}(X_0,X_{-1})\ar[d]^-{(j_*)^{-1}} \\
\bigoplus\limits_{x\in D_1}H_n(C_x^{X_{1}},\hat{C}^{X_{1}}_x)\ar[u]^-{\left(\biguplus\limits_{x\in D_1}i^x_n\right)_*}&\bigoplus\limits_{x\in D_1}\tilde{H}_{n-1}(\hat{C}^{X_{1}}_x)\ar[l]^{\left(\bigoplus\limits_{x\in D_1}\partial^x\right)^{-1}}\ar[u]^-{\left(\biguplus\limits_{x\in D_1}\tau^x_{n-1}\right)_*}\ar[r]_-{d^1_{1,n-1}}&H_{n-1}(X_0)}
\]
where the maps $(i^x_n)_*$ are defined as in \ref{lemma:isos phi} and where $\tau^x_{n-1}:C^f_{n-1}(\hat C_x^{X_1})\to C^f_{n-1}(X_0)$ are the inclusion homomorphisms.

It is clear that $\partial(i^x_n)_*=(\tau^x_{n-1})_*\partial^x$ for every $x\in D_1$ whenever $n\neq 1$ by the naturality of the long exact sequence. Moreover, since $\partial:H_1(X_1,X_0)\longrightarrow \tilde{H}_0(X_0)$ is the (range) restriction of $\partial:H_1(X_1,X_0)\longrightarrow H_0(X_0)$, then we have that $\partial(i^x_1)_*=(\tau^x_0)_*\partial^x$ for every $x\in D_1$ as well. Thus the left square in the last diagram commutes and therefore $d^1_{1,n-1}=\left(\biguplus\limits_{x\in D_1}\tau^x_{n-1}\right)_*$.

Now, let $[\sigma]\in \bigoplus\limits_{x\in D_1}\tilde{H}_{n-1}(\hat{C}^{X_{1}}_x)$. Then $[\sigma]=([\sigma_x])_{x\in D_1}$ with $[\sigma_x]\in \tilde{H}_{n-1}(\hat{C}^{X_{1}}_x)$ for each $x\in D_1$, and therefore $d^1_{1,n-1}([\sigma])=\sum\limits_{x\in D_1}[\sigma_x]$.

Let $p\geq 2$ and let $n\in \N$. Consider the following diagram
\[
\xymatrix@C=50pt@R=60pt{H_n(X_p,X_{p-1})\ar@/^2pc/[rr]^-{\tilde{d}^1_{p,n-p}}\ar[r]^-{\partial}&H_{n-1}(X_{p-1})\ar[r]^-{j_*}&H_{n-1}(X_{p-1},X_{p-2})\ar[d]^{(\phi_{n-1})_*}\\
\bigoplus\limits_{x\in D_p}H_n(C^{X_{p}}_x,\hat{C}^{X_{p}}_x)\ar[u]^-{\left(\biguplus\limits_{x\in D_p}i^x_n\right)_*}&\bigoplus\limits_{x\in D_p}\tilde{H}_{n-1}(\hat{C}^{X_{p}}_x)\ar[l]^{\left(\bigoplus\limits_{x\in D_p}\partial^x\right)^{-1}}\ar[dr]^-{d^1_{p,n-p}}\ar[u]^-{\left(\biguplus\limits_{x\in D_p}\tau^x_{n-1}\right)_*}&\bigoplus\limits_{y\in D_{p-1}}H_{n-1}(C^{X_{p-1}}_y,\hat{C}^{X_{p-1}}_y)\ar[d]^{\bigoplus\limits_{y\in D_{p-1}}\partial^y}\\
&&\bigoplus\limits_{y\in D_{p-1}}\tilde{H}_{n-2}(\hat{C}^{X_{p-1}}_y)}
\]
where, as above, the maps $(i^x_n)_*$ are defined as in \ref{lemma:isos phi} and where $\tau^x_{n-1}:C^f_{n-1}(\hat C_x^{X_p})\to C^f_{n-1}(X_{p-1})$ are the inclusion homomorphisms.

We have that
$$d^1_{p,n-p}=\left(\bigoplus\limits_{y\in D_{p-1}}\partial^y\right)(\phi_{n-1})_*j_*\partial\left(\biguplus\limits_{x\in D_p}i^x_n\right)_*\left(\bigoplus\limits_{x\in D_p}\partial^x\right)^{-1}$$

As before, the left square of the diagram commutes for every $n\in \N$. Therefore,
$$d^1_{p,n-p}=\left(\bigoplus\limits_{y\in D_{p-1}}\partial^y\right)(\phi_{n-1})_*j_*\left(\biguplus\limits_{x\in D_p}\tau^x_{n-1}\right)_*=\left(\bigoplus\limits_{y\in D_{p-1}}\partial^y\right)\left(\biguplus\limits_{x\in D_p}\phi_{n-1}j \tau^x_{n-1}\right)_*$$

Now we will find an explicit formula for $d^1_{p,n-p}$. Let $\sigma=(\sigma_x)_{x\in D_p}$ with $\sigma_x\in C^f_{n-1}(\hat{C}^{X_{p}}_x)$ for each $x\in D_p$. Then
$$\left(\biguplus\limits_{x\in D_p} \phi_{n-1}j \tau^x_{n-1}\right)(\sigma)=\sum\limits_{x\in D_p}\phi_{n-1}j\sigma_x=\sum\limits_{x\in D_p}\phi_{n-1}(\overline{\sigma_x}^{X_{p-2}})=\left(\sum\limits_{x\in D_p}\rho^y_{n-1}(\overline{\sigma_x}^{X_{p-2}})\right)_{y\in D_{p-1}}$$

Now, for each $x\in D_p$ we write $\sigma_x=\sum\limits_{i=1}^{l_x}a_i^x s^x_i$ with $l_x\in \N$, $a_i^x\in \Z$ and $s^x_i$ an $(n-1)$--chain in $\hat{C}^{X_p}_x$ for every $i\in \{1,\dots,l_x\}$. Then we have that
$$\rho^y_{n-1}(\overline{\sigma}^{X_{p-2}}_x)=\sum\limits_{i=1}^{l_x} a^x_i \rho^y_{n-1}(\overline{s^x_i}^{X_{p-2}})=\sum\limits_{s^x_i\ni y}a^x_i \overline{s^x_i}^y$$ for every $x\in D_p$.

Then,
$$\left(\biguplus\limits_{x\in D_p} \phi_{n-1}j \tau^x_{n-1}\right)(\sigma)=\left(\sum\limits_{x\in D_p}\sum\limits_{s^x_i\ni y}a^x_i \overline{s^x_i}^y\right)_{y\in D_{p-1}}$$

Finally, by lemma \ref{lemma:partial}, we see that
\begin{align*}
d^1_{p,n-p}([\sigma])&=\left(\bigoplus\limits_{y\in D_{p-1}}\partial^y\right)\left(\left[\left(\sum\limits_{x\in D_p}\sum\limits_{s^x_i\ni y}a^x_i\overline{s^x_i}^y\right)_{y\in D_{p-1}}\right]\right)=\\
&=\left(\partial^y\left(\left[\sum\limits_{x\in D_p}\sum\limits_{s^x_i\ni y}a^x_i\overline{s^x_i}^y\right]\right)\right)_{y\in D_{p-1}}=\\
&=\left(\left[\sum\limits_{x\in D_p}\sum\limits_{s^x_i\ni y}a^x_i\sgn_{s^x_i}(y)(s^x_i-\{y\})\right]\right)_{y\in D_{p-1}}
\end{align*}
\end{proof}

\begin{rem}
It is not difficult to prove that the morphisms of all the pages of the spectral sequence of the previous theorem can be computed in the same way as those of the first page, since for all $p,q\in\Z$ the groups $E^r_{p,q}$, $r\in\N$, are subquotients of the group $E^1_{p,q}$ and the morphisms of the $r$--th page of the spectral sequence are induced by the exact couple obtained from the long exact sequences in homology associated to the topological pairs $(X_p,X_{p-1})$.
\end{rem}

Note also that one can develop a similar spectral sequence to compute relative homology groups.

The spectral sequence of the previous theorem will get a much simpler form in the case that the homology of $\hat{U}_x$ is concentrated in some degree for all $x\in X$. This leads to the following definition.

\begin{definition}\label{def:quasi}
Let $X$ be a finite $T_0$--space. We say that $X$ is \emph{quasicellular} if there exists an order preserving map $\rho:X\longrightarrow \N_0$, which will be called \emph{quasicellular morphism for $X$}, such that
\begin{enumerate}[(1)]
\item The set $\{x\in X:\rho(x)=n\}$ is an antichain for every $n\in \N_0$. 
\item For every $x\in X$, the reduced homology of $\hat{U}_x$ is concentrated in degree $\rho(x)-1$.
\end{enumerate}
\end{definition}

Note that if $X$ is a quasicellular finite $T_0$--space, $\rho$ is a quasicellular morphism for $X$ and $x,y\in X$, then $x<y$ implies that $\rho(x)<\rho(y)$.

Note also that cellular spaces are quasicellular and that $h$--regular posets are quasicellular. But both inclusions are strict since
the non-Hausdorff suspension of the discrete space of three points is quasicellular but neither cellular nor $h$--regular.

Also, since an $h$--regular poset is not necessarily graded (see \cite[example 2.4]{Min}) we conclude that a poset might be quasicellular and non-graded.

\begin{coro} \label{coro_quasicel}
Let $X$ be a quasicellular finite space and let $\rho$ be a quasicellular morphism for $X$. Let $C(X)=(C_n(X),d_n)_{n\in\Z}$ be the chain complex defined by
\begin{itemize}
\item $C_n(X)=\bigoplus\limits_{\rho(x)=n}\tilde{H}_{n-1}(\hat{U}_x)$ for each $n\in \N_0$ and $C_n(X)=0$ for $n<0$.
\item For each $n\in \Z$, $d_n$ is the group homomorphism $d^1_{n+1,-1}$ of theorem \ref{theo:principal} (applied for the filtration defined by $X_p=\{x\in X:\rho(x)\leq p-1\}$ for all $p\in\Z$).
\end{itemize}
Then, $H_n(X)=H_n(C(X))$ for all $n\in\N_0$.
\end{coro}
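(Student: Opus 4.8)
The plan is to apply Theorem~\ref{theo:principal} to the filtration $\{X_p\}_{p\in\Z}$ given by $X_p=\{x\in X:\rho(x)\leq p-1\}$ and to show that the resulting spectral sequence collapses onto a single line. First I would check that this filtration is induced by antichains: since $\rho$ takes values in $\N_0$ we have $X_{-1}=X_0=\varnothing$, and for $n\geq 1$ the set $X_n-X_{n-1}=\{x\in X:\rho(x)=n-1\}$ is an antichain by condition~(1) of Definition~\ref{def:quasi}. As $X$ is finite, $X_p=X$ for $p$ large, so the filtration is exhausting and Theorem~\ref{theo:principal} applies, with $D_p=\{x\in X:\rho(x)=p-1\}$.

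The key observation is the identification $\hat{C}^{X_p}_x=\hat{U}_x$ for $p\geq 1$ and $x\in D_p$. Indeed, if $a\in X$ and $a>x$ then $\rho(a)>\rho(x)=p-1$, so $a\notin X_p$; hence $\hat{F}^{X_p}_x=\varnothing$. On the other hand, if $a<x$ then $\rho(a)<p-1$, so $a\in X_p$; hence $\hat{U}^{X_p}_x=\hat{U}_x$. Therefore $\hat{C}^{X_p}_x=\hat{U}^{X_p}_x\cup\hat{F}^{X_p}_x=\hat{U}_x$, and Theorem~\ref{theo:principal} yields $E^1_{p,q}=\bigoplus_{x\in D_p}\tilde{H}_{p+q-1}(\hat{U}_x)$ for $p\geq 1$, together with $E^1_{0,q}=H_q(\varnothing)=0$ and $E^1_{p,q}=0$ for $p\leq -1$.

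Next I would invoke condition~(2) of Definition~\ref{def:quasi}: the reduced homology of $\hat{U}_x$ is concentrated in degree $\rho(x)-1=p-2$, so $E^1_{p,q}=0$ unless $p+q-1=p-2$, i.e.\ unless $q=-1$. Thus the $E^1$-page is concentrated on the line $q=-1$, whence every differential $d^r$ with $r\geq 2$ vanishes, since for $r\geq 2$ either the source or the target of $d^r$ lies off that line. Hence $E^2=E^\infty$, and since $E^\infty$ is concentrated on $q=-1$ the convergence of the spectral sequence gives, with no extension problem, $H_n(X)\cong E^\infty_{n+1,-1}=E^2_{n+1,-1}$ for every $n$. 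Finally, unwinding the definitions, the complex $(E^1_{p,-1},d^1_{p,-1})_{p\in\Z}$ is precisely $C(X)$ after the reindexing $p=n+1$: one has $E^1_{n+1,-1}=\bigoplus_{\rho(x)=n}\tilde{H}_{n-1}(\hat{U}_x)=C_n(X)$ (using $\hat{C}^{X_{n+1}}_x=\hat{U}_x$, and $E^1_{0,-1}=0=C_{-1}(X)$) and $d_n=d^1_{n+1,-1}$ by the very definition of $C(X)$, so $E^2_{n+1,-1}=H_n(C(X))$, which combined with the above yields $H_n(X)=H_n(C(X))$.

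I do not expect a genuine obstacle here; the argument is essentially bookkeeping on top of Theorem~\ref{theo:principal}. The only point demanding care is keeping the three gradings straight — the filtration index $p$, the total degree $p+q$, and the degree shift $\rho(x)-1$ in condition~(2) — so that the $E^1$-page really does collapse onto the row $q=-1$ and the indices of $C(X)$ match those of the surviving row.
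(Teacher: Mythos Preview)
Your proof is correct and follows essentially the same approach as the paper's own argument: apply Theorem~\ref{theo:principal} to the filtration $X_p=\{x:\rho(x)\le p-1\}$, observe that $\hat C^{X_p}_x=\hat U_x$ for $x\in D_p$, and use the quasicellular condition to conclude that the $E^1$-page is concentrated on the row $q=-1$. The paper's proof is terser, but your expanded verification of the antichain condition, of $\hat F^{X_p}_x=\varnothing$, and of the index bookkeeping fills in exactly the details the paper leaves implicit.
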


\begin{proof}
Consider the filtration $\{X_p\}_{p\in \Z}$ given by $$X_p=\{x\in X:\rho(x)\leq p-1\}$$ for each $p\in \Z$. Clearly, the filtration $\{X_p\}_{p\in \Z}$ is induced by antichains, and thus theorem \ref{theo:principal} applies. Since $\rho$ is a quasicellular morphism for $X$, $\hat{C}_x^{X_p}=\hat{U}_x$  and hence it follows that $q=-1$ is the only non-trivial row of the first page of the spectral sequence, and therefore, the homology groups of $X$ are the homology groups of the chain complex determined by the groups and group homomorphisms on this row. By theorem \ref{theo:principal}, this chain complex is precisely $C(X)$.
\end{proof}

As a corollary of \ref{coro_quasicel}, we obtain theorem 3.7 of \cite{Min}:

\begin{theo}[Minian]
Let $X$ be a cellular space and let $C(X)$ be its cellular chain complex \cite[definition 3.6]{Min}. Then $H_p(X)=H_p(C(X))$ for each $p\in \Z$.
\end{theo}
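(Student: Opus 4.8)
The plan is to exhibit the cellular space $X$ as a quasicellular space whose quasicellular morphism is the degree function, to apply Corollary \ref{coro_quasicel}, and then to identify the resulting chain complex with the one of \cite[definition 3.6]{Min}.

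First I would check that $\rho=\deg\colon X\to\N_0$ is a quasicellular morphism for $X$ in the sense of Definition \ref{def:quasi}. Since $X$ is cellular it is in particular graded, so $\deg(x)=\dim U_x$ is defined for every $x\in X$ and $x<y$ forces $\deg(x)<\deg(y)$; hence $\deg$ is order preserving and each fibre $\{x\in X:\deg(x)=n\}$ is an antichain, which is condition (1). For condition (2), the definition of cellular poset says that $\hat U_x$ has the homology of a $(\deg(x)-1)$--sphere, and therefore its reduced homology is concentrated in degree $\deg(x)-1$, where for $\deg(x)=0$ one uses that $\hat U_x=\varnothing$ has $\widetilde H_{-1}=\Z$.

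Next I would invoke Corollary \ref{coro_quasicel} for this $\rho$. It produces the chain complex $C(X)$ with $C_n(X)=\bigoplus_{\deg(x)=n}\widetilde H_{n-1}(\hat U_x)$ for $n\ge 0$ and $C_n(X)=0$ for $n<0$, with differentials $d_n=d^1_{n+1,-1}$ given explicitly by Theorem \ref{theo:principal}, and it asserts $H_n(X)=H_n(C(X))$ for all $n\in\N_0$. As both groups vanish for $n<0$, this yields $H_p(X)=H_p(C(X))$ for every $p\in\Z$.

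The remaining step is to match $C(X)$ with Minian's cellular chain complex. The chain groups agree, up to the canonical isomorphism $\widetilde H_{n-1}(\hat U_x)\cong H_n(U_x,\hat U_x)$ coming from the contractibility of $U_x$ (it has a maximum). The actual content is that the boundary $d^1_{n+1,-1}$, whose formula in Theorem \ref{theo:principal} is written in terms of the signs $\sgn_s(y)$ and the chains $s-\{y\}$, coincides with the incidence-number description of the differential in \cite[definition 3.6]{Min}; this is a direct comparison of the two definitions. I expect this identification of the two differentials to be the only real obstacle, the rest being routine bookkeeping.
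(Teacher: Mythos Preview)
Your proposal is correct and matches the paper's approach exactly: the paper presents this theorem as an immediate corollary of Corollary~\ref{coro_quasicel}, having already remarked (right after Definition~\ref{def:quasi}) that cellular spaces are quasicellular, and gives no further argument. Your write-up simply makes explicit the two things the paper leaves implicit, namely the verification that $\rho=\deg$ is a quasicellular morphism and the identification of the resulting differentials with Minian's incidence-number definition; both are routine, and the paper does not address the second point at all.
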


\section{Applications}

\subsection*{Computation of homology groups of posets}

\ 

\ 

Just as a first and simple example consider the following:

\begin{ex} [Non-Hausdorff suspension]
Let $X$ be a finite $T_0$--space. 

Consider the filtration $\{X_p\}_{p\in \Z}$ of $\mathbb{S}X$ given by $X_0=U_+$ and $X_1=\mathbb{S}X$. Note that this filtration is induced by antichains. Since $U_+$ is contractible and $\hat{U}_+=X$, by theorem \ref{theo:principal} we have a bigraded spectral sequence $(E,d)$ that converges to $H_*(\mathbb{S}X)$ whose first page is
\[
\xymatrix@R=3pt{&&&&\\
&&\vdots&\vdots&\vdots\\
&&0&\tilde{H}_2(X)&0\\
&&0&\tilde{H}_1(X)&0\\
&&\Z\ar[l]+<-10pt,-10pt>;[rrr]+<0pt,-10pt>_>{p} \ar[d]+<-15pt,-10pt>;[uuuu]+<-15pt,0pt>^>{q} &\tilde{H}_0(X)&0&\\
&&\vdots&\vdots&\vdots\\
}
\] 

It is not hard to see that $d^1_{1,0}:\tilde{H}_0(X)\longrightarrow \Z$ is trivial. Hence, $H_0(\mathbb{S}X)=\Z$ and $H_n(\mathbb{S}X)=\tilde{H}_{n-1}(X)$ for all $n\in \N$.
\end{ex}

In the next example we will apply theorem \ref{theo:principal} to compute the homology groups of a more complicated finite space, which turns out to be a finite model of the real projective plane. This finite space was constructed by Barmak and Minian in \cite{BM2}.

\begin{ex}[Finite model of the real projective plane] \label{ex_projective_plane}
Let $X$ be the finite $T_0$--space whose Hasse diagram is
\[
\xymatrix@R=60pt{
j\bullet\ar@{-}[d]\ar@{-}[drr]\ar@{-}[drrrrr]&&k\bullet\ar@{-}[dl]\ar@{-}[d]\ar@{-}[drrrr]&&l\bullet\ar@{-}[dlll]\ar@{-}[d]\ar@{-}[dr]&&m\bullet\ar@{-}[d]\ar@{-}[dll]\ar@{-}[dllllll]\\
d\bullet\ar@{-}[d]\ar@{-}[drrr]&e\bullet\ar@{-}[dl]\ar@{-}[drr]&f\bullet\ar@{-}[dll]\ar@{-}[drrrr]&&g\bullet\ar@{-}[dllll]\ar@{-}[drr]&h\bullet\ar@{-}[dll]\ar@{-}[dr]&i\bullet\ar@{-}[d]\ar@{-}[dlll]\\
a\bullet&&&b\bullet&&&c\bullet\\
}
\] 

Consider the filtration $\{X_p\}_{p\in \Z}$ of $X$ given by $X_0=F_a$, $X_1=F_a\cup\{h,i\}$ and $X_2=X$. Note that this filtration is induced by antichains.

By \ref{theo:principal} there exists a bigraded spectral sequence $\{E^r,d_r\}_{r\in \N}$ that converges to $H_*(X)$.

Let $Z_a=E^1_{0,0}=H_0(F_a)\cong \Z$. Note that $Z_a$ is generated by $[a]$.

Observe that $E^1_{1,0}=\tilde{H}_0(\hat{F}_h)\oplus\tilde{H}_0(\hat{F}_i)=Z_h\oplus Z_i$, where $Z_h=\tilde{H}_0(\hat{F}_h)\cong \Z$ and $Z_i=\tilde{H}_0(\hat{F}_i)\cong \Z$. Also note that $Z_h$ is generated by $[l]-[j]$ and $Z_i$ is generated by $[m]-[k]$

Similarly, since $\hat{F}_b$ and $\hat{F}_c$ are finite models for $S^1$, we have that $E^1_{2,0}=\tilde{H}_1(\hat{F}_b)\oplus\tilde{H}_1(\hat{F}_c)=Z_b\oplus Z_c$, where $Z_b=\tilde{H}_1(\hat{F}_b)\cong \Z$ and $Z_c=\tilde{H}_1(\hat{F}_c)\cong \Z$. In this case we see that $Z_b$ and $Z_c$ are generated by 
$$g_0=[d,j]+[j,h]+[h,l]+[l,e]+[e,k]+[k,i]+[i,m]+[m,d]$$ and $$g_1=[f,j]+[j,h]+[h,l]+[l,g]+[g,m]+[m,i]+[i,k]+[k,f]$$
respectively. Now, it is easy to see that the first page of our spectral sequence is, in fact, a chain complex:
\[
\xymatrix@C=40pt{
\cdots&0\ar[l]& Z_a\ar[l]& Z_h\oplus Z_i\ar[l]_-{d_{1,0}^1=\alpha}& Z_b\oplus Z_c\ar[l]_-{d_{2,0}^1=\beta}&0\ar[l]&\cdots\ar[l]}
\]

Using theorem \ref{theo:principal} it is clear that $\alpha=0$. On the other hand, a quick calculation shows that $\beta(g_0)=([l]-[j],[m]-[k])$ and $\beta(g_1)=([l]-[j],[k]-[m])$. It follows that $$E^2_{1,0}= Z_h\oplus Z_i/\Ima \beta\cong\Z_2.$$

Thus, $H_0(X)=\Z$, $H_1(X)=\Z_2$ and $H_n(X)=0$ for $n\geq 2$.

It is interesting to observe that the chain complex above is much simpler than the simplicial chain complex of $\mathcal{K}(X)$.
\end{ex}

\subsection*{Homological morse theory of posets}

\ 

\ 

G. Minian introduced in \cite{Min} a discrete version of Morse theory for posets and developed a homological variant of this theory showing that given a homologically admissible Morse matching on a cellular poset $X$, the homology groups of $X$ can be computed from a chain complex which in degree $p$ consists of the free abelian group generated by the critical points of $X$ of degree $p$. Using our techniques we will give here a generalization of his result with a completely different and more conceptual proof.

We begin by recalling some definitions from \cite{Min}.

Let $X$ be a finite poset and let $\mathcal{H}(X)$ be its Hasse diagram. 

Let $M$ be a matching on $\mathcal{H}(X)$ and let $\mathcal{H}_M(X)$ be the graph obtained from $\mathcal{H}(X)$ by reversing the orientations of the edges which are not in $M$. We say that $M$ is a \emph{Morse matching} if the graph $\mathcal{H}_M(X)$ is acyclic.

Given a Morse matching $M$ on $\mathcal{H}(X)$ we define the \emph{critical points} of $X$ as the points of $X$ which are not incident to any edge of $M$.

We say that an edge $(a,b)$ of the Hasse diagram of $X$ is \emph{homologically admissible} if the space $\hat{U}_b-\{a\}$ is acyclic.

We say that a matching $M$ on $\mathcal{H}(X)$ is \emph{homologically admissible} if every edge of $M$ is homologically admissible.

For our proof we need some lemmas. The first of them is one of the keys of our proof.

\begin{lemma} \label{lemma_Morse_1}
Let $X$ be a poset. Let $b$ be a maximal point of $X$. Suppose there exists $a\in X$ such that $\hat{F}_a=\{x\in X \tq a<x\}=\{b\}$ and such that the edge $(a,b)$ of the Hasse diagram of $X$ is homologically admissible. Then $H_n(X,X-\{a,b\})=0$ for all $n\in\Z$.
\end{lemma}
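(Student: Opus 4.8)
The plan is to apply Proposition \ref{prop:+ hom C_x} with the antichain $D=\{a,b\}$. First I would check that $\{a,b\}$ really is an antichain in $X$: this is the only delicate point, since $a<b$ would make it a chain. But the hypothesis says $\hat{F}_a=\{b\}$, so in particular $a$ and $b$ are not equal and... wait, actually $a<b$, so $\{a,b\}$ is \emph{not} an antichain. So a direct application of \ref{prop:+ hom C_x} is not available, and the argument must instead filter $\{a,b\}$ into two successive antichains.

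So the real plan is to use a two-step filtration. Since $b$ is maximal in $X$, set $Y=X-\{b\}$; then $\{b\}$ is an antichain in $X$ and, because $b$ is maximal, $\hat{C}^X_b=\hat{U}^X_b=U^X_b-\{b\}$. I would first compute $H_n(X,Y)\cong\tilde H_{n-1}(\hat U^X_b)$ by Proposition \ref{prop:+ hom C_x} applied to the antichain $D=\{b\}$ in $X$. Next, inside $Y=X-\{b\}$, the point $a$ satisfies $\hat F^Y_a=\hat F^X_a-\{b\}=\varnothing$, so $a$ is maximal in $Y$; hence $\{a\}$ is an antichain in $Y$ and $\hat C^Y_a=\hat U^Y_a=\hat U^X_a$. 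Proposition \ref{prop:+ hom C_x} gives $H_n(Y,Y-\{a\})=H_n(Y,X-\{a,b\})\cong\tilde H_{n-1}(\hat U^X_a)$. The long exact sequence of the triple $(X,Y,X-\{a,b\})$ then reads
\[
\cdots\to H_n(Y,X-\{a,b\})\to H_n(X,X-\{a,b\})\to H_n(X,Y)\xrightarrow{\ \partial\ }H_{n-1}(Y,X-\{a,b\})\to\cdots,
\]
i.e.
\[
\cdots\to \tilde H_{n-1}(\hat U^X_a)\to H_n(X,X-\{a,b\})\to \tilde H_{n-1}(\hat U^X_b)\xrightarrow{\ \partial\ }\tilde H_{n-2}(\hat U^X_a)\to\cdots.
\]
So it suffices to show that the connecting map $\partial$ is an isomorphism in every degree.

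To identify $\partial$ I would use Lemma \ref{lemma:partial} (or rather its relative analogue, exactly as in the proof of Theorem \ref{theo:principal}): a relative cycle in $C^f(X,Y)$ supported on chains through $b$ has boundary obtained by deleting $b$ with the appropriate sign, and since $b$ is maximal every such chain is of the form $s\cup\{b\}$ with $s$ a chain in $\hat U^X_b$. This shows $\partial$ is, up to the canonical identifications, induced by the inclusion of chain complexes $C^f(\hat U^X_b)\hookrightarrow C^f(\hat U^X_b - \{a\}) $ composed with the quotient — more precisely $\partial$ is identified with the map $\tilde H_{*-1}(\hat U^X_b)\to \tilde H_{*-1}(\hat U^X_b)/\tilde H_{*-1}(\hat U^X_b\cap\hat U^X_a)\cdots$; the cleanest formulation is that $\partial$ fits in the long exact sequence of the pair $(\hat U^X_b,\hat U^X_b-\{a\})$ together with the excision isomorphism $\tilde H_{*}(\hat U^X_b,\hat U^X_b-\{a\})\cong \tilde H_*(\hat U^X_a \text{ relative data})$. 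Here is where the homological admissibility of $(a,b)$ enters: $\hat U^X_b-\{a\}$ is acyclic, so the long exact sequence of $(\hat U^X_b,\hat U^X_b-\{a\})$ collapses to give $\tilde H_n(\hat U^X_b)\cong \tilde H_n(\hat U^X_b,\hat U^X_b-\{a\})$ for all $n$; and $a$ is maximal in $\hat U^X_b$ (since $\hat F^X_a=\{b\}$ means nothing of $X$ other than $b$ lies strictly above $a$), so $C^X_a\cap \hat U^X_b = U^X_a\cup\hat F^{\hat U^X_b}_a = U^X_a = C^{\hat U^X_b}_a$ is contractible and excision/\ref{prop:+ hom C_x} gives $\tilde H_n(\hat U^X_b,\hat U^X_b-\{a\})\cong \tilde H_{n-1}(\hat U^X_a)$. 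Chaining these identifications with the description of $\partial$ from Lemma \ref{lemma:partial} shows $\partial$ is an isomorphism in every degree, and the triple sequence then forces $H_n(X,X-\{a,b\})=0$ for all $n$.

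The main obstacle I anticipate is purely bookkeeping: pinning down the various posets $\hat U^X_b$, $\hat U^X_b-\{a\}$, $\hat U^X_a$ and $C^{\hat U^X_b}_a$ and checking the claimed containments (in particular that $a$ is maximal in $\hat U^X_b$ and that $C^{\hat U^X_b}_a = U^X_a$ is contractible), and then verifying that the connecting homomorphism of the triple really is the excision-plus-admissibility isomorphism rather than merely abstractly an isomorphism. Once the identification of $\partial$ via Lemma \ref{lemma:partial} is in hand, the homological algebra is routine.
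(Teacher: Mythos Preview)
Your approach is correct, but you have made life considerably harder than necessary by removing $b$ first and $a$ second. The paper does it the other way around, and the gain is dramatic.

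In the paper's proof the triple is $(X,\,X-\{a\},\,X-\{a,b\})$. Since $\hat F_a=\{b\}$, the point $a$ is an up beat point of $X$, so $X-\{a\}\hookrightarrow X$ is a strong deformation retract and $H_n(X,X-\{a\})=0$ for all $n$ with no computation at all. For the other piece, $b$ is maximal in $X-\{a\}$, so $\hat C^{X-\{a\}}_b=\hat U_b-\{a\}$, which is acyclic by homological admissibility; Proposition~\ref{prop:+ hom C_x} then gives $H_n(X-\{a\},X-\{a,b\})=0$. Both outer terms of the triple long exact sequence vanish, so the middle term is forced to vanish too---no connecting map ever needs to be identified.

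Your ordering, by contrast, produces two nonzero groups $\tilde H_{n-1}(\hat U_b)$ and $\tilde H_{n-2}(\hat U_a)$ and then requires you to prove that the connecting map between them is an isomorphism. That verification is doable (it is essentially the naturality of the long exact sequence under the inclusion of triples $(U_b,\hat U_b,\hat U_b-\{a\})\hookrightarrow(X,X-\{b\},X-\{a,b\})$, together with admissibility), and your sketch of it is on the right track despite some muddled intermediate formulations (for instance, there is no inclusion $C^f(\hat U_b)\hookrightarrow C^f(\hat U_b-\{a\})$; the arrow goes the other way). But it is all avoidable. The moral: when one of the two points is a beat point, remove \emph{that} one first.
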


\begin{proof}
Clearly, $a$ is upbeat point of $X$ an hence $X-\{a\}$ is a strong deformation retract of $X$. Thus, $H_n(X,X-\{a\})=0$ for all $n\in\Z$.

On the other hand, applying \ref{prop:+ hom C_x} we obtain that
$$H_n(X-\{a\},X-\{a,b\})=H_{n-1}(\hat{C}^{X-\{a\}}_b)=\tilde H_{n-1}(\hat{U}_b-\{a\})=0$$
for all $n\in \Z$ since the edge $(a,b)$ is homologically admissible.

Then the result follows from the long exact sequence in homology of the triple $(X,X-\{a\},X-\{a,b\})$.
\end{proof}

\begin{lemma} \label{lemma_Morse_2}
Let $X$ be a quasicellular poset and let $\rho:X\to \N_0$ be its quasicellular morphism. Let $(a,b)$ be an homologically admissible edge of the Hasse diagram of $X$. Then $\rho(b)=\rho(a)+1$.
\end{lemma}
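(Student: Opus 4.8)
The plan is to exploit the fact that homological admissibility of $(a,b)$ forces $\hat U_b-\{a\}$ to be acyclic, and to combine this with the quasicellularity hypothesis to pin down $\rho(b)$. First I would record the two pieces of data we have about the interval below $b$: on the one hand, since $\rho$ is a quasicellular morphism, the reduced homology of $\hat U_b$ is concentrated in degree $\rho(b)-1$, and on the other hand, since $(a,b)$ is an edge of the Hasse diagram, $a$ is a maximal element of $\hat U_b$, so that $\hat U_b = (\hat U_b-\{a\}) \cup \{a\}$ with $a$ contributing as a ``top point'' (indeed $\hat F_a^{\hat U_b}=\varnothing$, i.e. $a$ is maximal in $\hat U_b$). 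The strategy is to compare the homology of $\hat U_b$ with that of $\hat U_b-\{a\}$ via a relative homology argument, just as in the proof of Lemma~\ref{lemma_Morse_1}.

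Concretely, I would apply Proposition~\ref{prop:+ hom C_x} (or rather Lemma~\ref{lemma_Morse_1}'s second displayed computation, specialized to the poset $\hat U_b$ and its maximal point $a$) to get
$$H_n(\hat U_b, \hat U_b-\{a\}) \cong \tilde H_{n-1}\big(\hat C_a^{\hat U_b}\big) = \tilde H_{n-1}\big(\hat U_a\big)$$
for all $n$, using that in $\hat U_b$ we have $\hat F_a=\varnothing$ so $\hat C_a^{\hat U_b}=\hat U_a$. Since $\rho$ is quasicellular, the right-hand side is concentrated in degree $n-1=\rho(a)-1$, i.e. $n=\rho(a)$. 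Now feed this into the long exact sequence of the pair $(\hat U_b,\hat U_b-\{a\})$: because $\hat U_b-\{a\}$ is acyclic (homological admissibility of $(a,b)$), the connecting maps give $\tilde H_n(\hat U_b) \cong H_n(\hat U_b,\hat U_b-\{a\})$ for all $n$. Hence $\tilde H_*(\hat U_b)$ is concentrated in degree $\rho(a)$. But by quasicellularity it is also concentrated in degree $\rho(b)-1$, and it is nontrivial there (a reduced homology group ``concentrated in degree $d$'' in the sense used for quasicellular spaces is nonzero in that degree — this is the standard convention, as the remark after Proposition~\ref{prop:+ hom C_x} makes explicit with $\tilde H_{-1}$ of the empty set). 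Comparing the two, $\rho(b)-1=\rho(a)$, which is the claim.

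The main obstacle, and the one place I would be careful, is the degenerate case $\hat U_b-\{a\}=\varnothing$, equivalently $\hat U_b=\{a\}$, where ``acyclic'' must be read as ``has the reduced homology of a point'', so that $\tilde H_{-1}(\varnothing)=\Z$ would be the wrong interpretation — but here homological admissibility is stated as ``$\hat U_b-\{a\}$ is acyclic'', and the convention in the paper is that acyclic means $\tilde H_n=0$ for all $n$, so $\varnothing$ is \emph{not} acyclic and this case simply forces $\hat U_b=\{a\}$, $\rho(a)=0$, and $\hat U_a=\varnothing$, giving $\tilde H_{-1}(\hat U_b)=\tilde H_{-1}(\{a\})=0$... so one must instead argue directly that $\hat U_b=\{a\}$ yields $\rho(b)-1 = \rho(a)=0$. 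A cleaner route avoiding this subtlety is: regardless of whether $\hat U_b-\{a\}$ is empty, run the long exact sequence to conclude $\tilde H_n(\hat U_b)\cong H_n(\hat U_b,\hat U_b-\{a\})\cong \tilde H_{n-1}(\hat U_a)$, note the left side is nonzero exactly in degree $\rho(b)-1$ and the right side is nonzero exactly in degree $\rho(a)+1$ (where I use that $\tilde H_{-1}(\varnothing)=\Z$ sits in degree $-1 = \rho(a)-1$, consistent with $\rho(a)=0$ when $\hat U_a=\varnothing$), and conclude $\rho(b)-1=\rho(a)+1-1=\rho(a)+\ldots$ — here I would double-check the indexing once more, since the shift by one between $H_n(\hat U_b,\hat U_b-\{a\})$ and $\tilde H_{n-1}(\hat U_a)$ is exactly what produces the ``$+1$'' in $\rho(b)=\rho(a)+1$, and getting that bookkeeping right is the whole content of the lemma.
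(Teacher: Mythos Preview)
Your proposal is correct and follows essentially the same route as the paper: apply Proposition~\ref{prop:+ hom C_x} with $D=\{a\}$ inside $\hat U_b$ to get $H_n(\hat U_b,\hat U_b-\{a\})\cong \tilde H_{n-1}(\hat U_a)$, use homological admissibility to kill $\tilde H_*(\hat U_b-\{a\})$, and read off the result from the long exact sequence of the pair $(\hat U_b,\hat U_b-\{a\})$. The paper's proof is a three-line version of your first two paragraphs; your subsequent worrying about the degenerate case and the muddled index-chasing at the end (where $\rho(a)+1$ slips in erroneously) are unnecessary---the isomorphism $\tilde H_n(\hat U_b)\cong \tilde H_{n-1}(\hat U_a)$ from the long exact sequence, together with both sides being concentrated in the degrees dictated by quasicellularity, gives $\rho(b)-1=\rho(a)$ directly.
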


\begin{proof}
We have that $\tilde H_{n}(\hat{U}_b-\{a\})=0$ for all $n\in \Z$ since the edge $(a,b)$ is homologically admissible and $H_n(\hat U_b,\hat U_b-\{a\})=\tilde H_{n-1}(\hat{U}_a)$ by \ref{prop:+ hom C_x}. Then the result follows from the long exact sequence in homology of the pair $(\hat{U}_b,\hat{U}_b-\{a\})$.
\end{proof}

Now, we will prove that the homology groups of a quasicellular poset $X$ can be computed from a chain complex constructed from a homologically admissible Morse matching, generalizing theorem 3.14 of \cite{Min}. The idea of the proof is that, by lemma \ref{lemma_Morse_1}, under certain hypotheses a homologically admissible edge of a poset can be removed without altering the homology groups of the poset. Thus, we will prove that the edges of a homologically admissible Morse matching can be arranged in such a way that the hypotheses of lemma \ref{lemma_Morse_1} are satisfied and hence we will be able to remove all the edges of the matching to obtain that the homology groups of $X$ can be computed from the set of critical points.

To put this idea into work, we consider a suitable filtration of the poset $X$ and construct a spectral sequence which converges to the homology groups of $X$.

\begin{theo}
Let $X$ be a quasicellular poset and let $p:X\to \N_0$ be its quasicellular morphism. Let $M$ be a homologically admissible Morse matching on $\mathcal{H}(X)$. For $n\in\N_0$ let $A_n=\{x\in X \tq \textnormal{$x$ is a critical point of $X$ and $\rho(x)=n$}\}$. Let $(C_n,d_n)_{n\in\N_0}$ be the chain complex defined by $\displaystyle C_n=\bigoplus_{x\in A_n} \tilde H_{n-1}(\hat U_x)$ and where the differentials $d_n$ are defined as in theorem \ref{theo:principal}. Then the homology of $X$ coincides with the homology of $(C_n,d_n)_{n\in\N_0}$.
\end{theo}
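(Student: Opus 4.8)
The plan is to construct a filtration of $X$ whose associated homology spectral sequence has, on its first page, exactly one nonzero group for each critical point of $X$ --- a copy of $\tilde{H}_{\rho(x)-1}(\hat{U}_x)$ in total degree $\rho(x)$ --- every other group on the first page being zero; the zero groups are exactly the ones into which the matched pairs of $M$ would contribute, and they vanish by Lemma~\ref{lemma_Morse_1}. Since $X$ is finite the filtration is bounded, so this spectral sequence converges to $H_*(X)$, and computing the differentials between the surviving groups by the formula of Theorem~\ref{theo:principal} (using the remark after it, which says that the differentials of every page are computed in the same way as those of the first) identifies $H_*(X)$ with the homology of the chain complex $(C_n,d_n)_{n\in\N_0}$ of the statement. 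This is entirely parallel to the way Corollary~\ref{coro_quasicel} is deduced from Theorem~\ref{theo:principal}.

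To build the filtration I would order the elements of $X$, inserting them one at a time, each \emph{step} being either a single critical point or a whole matched pair $\{a,b\}$ with $a\lessdot b$ (the endpoints of an edge of $M$), so that: (i) an element is inserted only after every element lying strictly below it in $X$ has already been inserted; and (ii) when a matched pair $\{a,b\}$ is inserted, no coface of $a$ other than $b$ has been inserted yet. Writing $\varnothing=X_{-1}\subseteq X_0\subseteq\cdots\subseteq X_N=X$ for the resulting filtration, each difference $X_p-X_{p-1}$ is either $\{x\}$ with $x$ a critical point or a matched pair $\{a,b\}$. In the first case condition (i) forces $\hat{C}^{X_p}_x=\hat{U}_x$ (everything below $x$ is in $X_p$, nothing above $x$ is), so Proposition~\ref{prop:+ hom C_x} gives $H_n(X_p,X_{p-1})\cong\tilde{H}_{n-1}(\hat{U}_x)$, which by quasicellularity is concentrated in total degree $\rho(x)$, exactly as wanted. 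In the second case condition (ii) gives $\hat{F}^{X_p}_a=\{b\}$, so $b$ is maximal in $X_p$, while condition (i) gives $\hat{U}^{X_p}_b=\hat{U}_b$, so the edge $(a,b)$ is still homologically admissible inside $X_p$; Lemma~\ref{lemma_Morse_1} then yields $H_n(X_p,X_{p-1})=0$ for every $n$. Hence the first page is as described. (If one prefers to remain literally within the hypotheses of Theorem~\ref{theo:principal}, each matched pair can instead be split into two consecutive antichain steps; then ``the pair contributes $0$'' is replaced by ``the two contributions of the pair cancel under an isomorphism of type $d^1$'', which follows from Proposition~\ref{prop:+ hom C_x} and homological admissibility in the same way as Lemma~\ref{lemma_Morse_1}.)

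The heart of the argument --- and the step where the \emph{Morse} hypothesis on $M$, i.e. the acyclicity of the modified Hasse diagram $\mathcal{H}_M(X)$, is indispensable --- is the existence of an insertion order satisfying (i) and (ii) simultaneously: requirement (ii) forces each matched pair to be processed before certain other elements which, by (i), must themselves be processed after still others, and it is not obvious that these demands are jointly consistent. I would establish consistency from the acyclicity of $\mathcal{H}_M(X)$ --- this is in essence the statement that an acyclic matching can be realized by a sequence of elementary reductions --- using Lemma~\ref{lemma_Morse_2} (homologically admissible, hence all matched, edges join consecutive $\rho$-levels) to keep control of how the matched pairs interact with the grading. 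I expect this combinatorial lemma to be the main obstacle; once it is in hand, the rest is routine: apply Proposition~\ref{prop:+ hom C_x} and Lemma~\ref{lemma_Morse_1} as above, invoke convergence of the spectral sequence, and read off the differentials from Theorem~\ref{theo:principal} together with the remark following it.
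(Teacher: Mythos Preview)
Your approach is essentially the paper's: both hinge on Lemma~\ref{lemma_Morse_1} together with the acyclicity of $\mathcal{H}_M(X)$ to order the matched pairs so that each one can be inserted and then ``cancelled,'' leaving only the critical points. The combinatorial heart --- producing an insertion order satisfying your (i) and (ii) --- is exactly what the paper does: it defines a partial order $\preceq$ on the matched pairs based at a given $\rho$-level by $(a,b)\preceq(a',b')$ iff there is a chain $a=a_0<b_1>a_1<b_2>\cdots<b_l=b'$ through matched edges, and proves antisymmetry directly from the Morse condition (a violation of antisymmetry yields a directed cycle in $\mathcal{H}_M(X)$).

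The one organisational difference is worth noting. The paper keeps a \emph{coarse} outer filtration $X_n=\{x:\rho(x)\le n\}\cup\{z:(y,z)\in M,\ \rho(y)=n\}$ indexed by $\rho$-level, and only uses your fine ``one-step-at-a-time'' filtration \emph{locally}, as a refinement $X_{n-1}\cup A_n=B_0\subseteq B_1\subseteq\cdots\subseteq B_N=X_n$, to prove $H_*(X_n,X_{n-1})\cong H_*(B_0,X_{n-1})\cong\bigoplus_{x\in A_n}\tilde H_{*-1}(\hat U_x)$ via repeated applications of Lemma~\ref{lemma_Morse_1}. The payoff is that the resulting spectral sequence has a single nonzero row, so it \emph{is} the chain complex $(C_n,d_n)$ and all differentials are $d^1$'s computable as in Theorem~\ref{theo:principal}. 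With your global fine filtration the critical-point entries land at scattered bidegrees $(p,\rho(x)-p)$, so the relevant maps between them are higher differentials $d^r$ for various $r$; identifying the resulting spectral sequence with the stated chain complex $(C_n,d_n)$ then needs an extra argument that your appeal to ``the remark after Theorem~\ref{theo:principal}'' does not quite supply on its own. The paper's two-level organisation sidesteps this cleanly.
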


\begin{proof}
For $n\in\N_0$ let $$X_n=\{x\in X \tq \rho(x)\leq n\} \cup \{z \tq (y,z)\in M \textnormal{ and } \rho(y)=n\}.$$
By \ref{lemma_Morse_2}, $X_{n-1}\subseteq X_n$ for all $n\in\N$ and since $X$ is a finite space it follows that $(X_n)_{n\in\N_0}$ is a filtration of $X$.

Let $X_{-1}=\varnothing$. We will compute now $H_i(X_n,X_{n-1})$ for all $i\in\Z$ and for all $n\in\N_0$. Fix $n\in\N_0$. Let $\mathcal{S}=\{(y,z)\in M \tq \rho(y)=n\}$. Suppose $\mathcal{S}\neq\varnothing$. We define a relation $\preceq$ in $\mathcal{S}$ as follows: $(a,b)\preceq (a',b')$ if and only if there exist $l\in\N$ and $(a_0,b_0),\ldots,(a_l,b_l)\in \mathcal{S}$ such that $(a_0,b_0)=(a,b)$, $(a_l,b_l)=(a',b')$ and $a_j\in \hat U_{b_{j+1}}$ for all $j\in\{0,\ldots,l-1\}$.

This relation is clearly reflexive and transitive. We will prove now that it is also antisymmetric. Suppose that $(a,b)$ and $(a',b')$ are distinct elements of $\mathcal{S}$ such that $(a,b)\preceq (a',b')$ and $(a',b')\preceq (a,b)$. Then there exist $l,m\in\N$, $(a_0,b_0),\ldots,(a_l,b_l)\in \mathcal{S}$ and $(a_0',b_0'),\ldots,(a_m',b_m')\in \mathcal{S}$ such that $(a_0,b_0)=(a,b)=(a'_m,b'_m)$, $(a_l,b_l)=(a',b')=(a'_0,b'_0)$, $a_j\in \hat U_{b_{j+1}}$ for all $j\in\{0,\ldots,l-1\}$ and $a'_k\in \hat U_{b'_{k+1}}$ for all $k\in\{0,\ldots,m-1\}$. In addition, we may suppose that $(a_j,b_j)\neq (a_{j+1},b_{j+1})$ for all $j\in\{0,\ldots,l-1\}$ and that $(a'_k,b'_k)\neq (a'_{k+1},b'_{k+1})$ for all $k\in\{0,\ldots,m-1\}$. And since $M$ is a matching, we obtain that $a_j\neq a_{j+1}$ for all $j\in\{0,\ldots,l-1\}$ and $a'_k\neq a'_{k+1}$ for all $k\in\{0,\ldots,m-1\}$. Now, note that $(a_j,b_{j+1})\in\mathcal{H}(X)-M$ for all $j\in\{0,\ldots,l-1\}$ since $a_j<b_{j+1}$, $X$ is quasicellular, $\rho(a_j)=n=\rho(b_{j+1})-1$ (by the previous lemma) and $M$ is a matching. In a similar way $(a'_k,b'_{k+1})\in\mathcal{H}(X)-M$ for all $k\in\{0,\ldots,m-1\}$. Thus, 
\begin{displaymath}
(a'_m,b'_m),(b'_m,a'_{m-1}),(a'_{m-1},b'_{m-1}),\ldots,(a'_0,b'_0),(b_l,a_{l-1}),(a_{l-1},b_{l-1}),\ldots,(a_1,b_1),(b_1,a_0)
\end{displaymath}
is a cycle in $\mathcal{H}_M(X)$ which entails a contradiction since $M$ is a Morse matching.

Then the relation $\preceq$ is antisymmetric and hence it is a partial order.

Let $N=\#\mathcal{S}$. Extending the partial order in $\mathcal{S}$ to a linear order we obtain that we may label the elements of  $\mathcal{S}$ as $(y_1,z_1),\ldots,(y_N,z_N)$ in such a way that if $(y_j,z_j)\preceq (y_k,z_k)$ then $j\leq k$.

Note that 
$$X_n=X_{n-1}\cup A_n \cup \bigcup_{j=1}^N \{y_j,z_j\}$$
(recall that $A_n$ was defined as $A_n=\{x\in X \tq \textnormal{$x$ is a critical point of $X$ and $\rho(x)=n$ }\}$). 

For $k\in\{0,1,\ldots,N\}$ let 
$$B_k=X_{n-1}\cup A_n \cup \bigcup_{j=1}^k \{y_j,z_j\}\ .$$
Hence, $B_0=X_{n-1}\cup A_n$ and $B_N=X_n$.

Let $r\in\{1,\ldots,N\}$. We claim that $\hat U^X_{z_r}=\hat U^{B_r}_{z_r}$. Indeed, let $x\in \hat U^X_{z_r}$. Then $x<z_r$ and thus $\rho(x)<\rho(z_r)=n+1$ by the previous lemma. If $x\in X_{n-1}\cup A_n$ then $x\in B_r$. If $x\notin X_{n-1}\cup A_n$ then there exists $s\in\{1,\ldots,N\}$ such that $x=y_s$. Thus, $(y_s,z_s)\preceq (y_r,z_r)$ and hence $s\leq r$. Then, $x=y_s\in B_r$. Thus, $\hat U^X_{z_r}\subseteq \hat U^{B_r}_{z_r}$. The other inclusion is trivial.

Hence, the edge $(y_r,z_r)$ is homologically admissible in $B_r$.

On the other hand, we claim that $\hat F^{B_r}_{y_r}=\{z_r\}$. Indeed, suppose that $x\in B_r$ satisfies that $x>y_r$. Hence, $\rho(x)>\rho(y_r)=n$ and thus $x=z_s$ for some $s\in\{1,\ldots,r\}$. But this implies that $y_r\in \hat U_{z_s}$ and hence $(y_r,z_r)\preceq (y_s,z_s)$. Then $r\leq s$ and thus $s=r$ and $x=z_r$. Hence, $\hat F^{B_r}_{y_r}\subseteq \{z_r\}$ while the other inclusion is trivial.

Now, since $\rho(z_r)=n+1$ and $B_r\subseteq X_n \subseteq \{x\in X \tq \rho(x)\leq n+1\}$ (by \ref{lemma_Morse_2}) it follows that $z_r$ is a maximal point of $B_r$. Hence, we are under the hypotheses of lemma \ref{lemma_Morse_1} and thus we obtain that $H_i(B_r,B_{r-1})=0$ for all $i\in\Z$.

Hence, we have proved that $H_i(B_r,B_{r-1})=0$ for all $i\in\Z$ and for all $r\in\{1,\ldots,N\}$. It follows that $H_i(B_N,B_0)=0$ for all $i\in\Z$. Note that this was done under the hypotesis $\mathcal{S}\neq\varnothing$, but holds trivially if $\mathcal{S}=\varnothing$.

Thus, by \ref{prop:+ hom C_x},
$$H_i(X_n,X_{n-1})=H_i(B_N,X_{n-1})\cong H_i(B_0,X_{n-1})\cong \left\{
\begin{array}{cl}
\displaystyle \bigoplus_{x\in A_n} \tilde H_{n-1}(\hat U_x) & \textnormal{if $i=n$} \\
0 & \textnormal{if $i\neq n$}
\end{array}
\right.$$
since $X$ is quasicellular.

From the filtration $(X_n)_{n\in\N_0}$ of $X$ we can contruct a spectral sequence in a similar way to the one in the proof of theorem \ref{theo:principal}, which will have in its first page a single nontrivial row and whose differentials can be computed in the same way as in \ref{theo:principal} by naturality of the long exact sequences since the isomorphisms $H_i(X_n,X_{n-1})\cong H_i(B_0,X_{n-1})$ are given by the inclusion maps. Thus, the result follows.
\end{proof}

\begin{rem}
The previous theorem might not hold if the space $X$ is not quasicellular even if for all $x\in X$ the homology of $\hat U_x$ is concentrated in some degree. For example, let $X$ be defined by the following Hasse diagram
\[
\xymatrix{ & & h\bullet\phantom{h} \\ & g\bullet\phantom{g} \ar@{-}[ru] & \\ e\bullet\phantom{e} & f\bullet\phantom{f} \ar@{-}[u] & \\  c\bullet\phantom{c} \ar@{-}[u] \ar@{-}[ru] & d\bullet\phantom{d} \ar@{-}[lu] \ar@{-}[ruuu] & \\ a\bullet\phantom{a} \ar@{-}[u] \ar@{-}[ru] & b\bullet\phantom{b} \ar@{-}[lu] \ar@{-}[u] 
}
\] 
and let $M=\{(c,e),(d,h),(f,g)\}$. It is easy to verify that $M$ is a homologically admissible Morse matching. On the other hand $f$ and $g$ are beat points of $X$ and hence $X$ is homotopy equivalent to $X-\{f,g\}$ which is a finite model for $S^2$. But the set of critical points is $\{a,b\}$ an thus if $(C_n,d_n)_{n\in\N_0}$ is the chain complex of the previous theorem we obtain that $C_0=\Z\oplus\Z$ and $C_n=0$ for all $n\in\N$. Clearly, the homology groups of $(C_n,d_n)_{n\in\N_0}$ do not coincide with those of $X$.
\end{rem}

\subsection*{M\"obius function}

\ 

\

Now, we will apply our methods to obtain different formulas to compute the M\"obius function of a poset. Recall that the M\"obius function of a poset $P$ equals the number of chains of $P$ of odd cardinality minus the number of chains of $P$ of even cardinality (where the empty chain counts as a chain of even cardinality) and is denoted by $\mu(P)$. Clearly, $\mu(P)$ coincides with the reduced Euler characteristic of the order complex of $P$ and thus with the reduced Euler characteristic of $P$ (viewed as a topological space).

Recall also that the Euler characteristic of a finitely generated graded abelian group $G=(G_n)_{n\in\Z}$ is defined as $$\chi(G)=\sum_{n\in\Z} (-1)^{n}\rg(G_n)$$
and that the Euler characteristic of a finitely generated bigraded abelian group $E=(E_{p,q})_{p,q\in\Z}$ is defined as $$\chi(E)=\sum_{p,q} (-1)^{p+q}\rg(E_{p,q}).$$

Clearly, if $\{(E^r_{p,q})_{p,q\in \Z}\}_{r\in \N}$ is a bigraded spectral sequence such that $E^1$ is finitely generated then $\chi(E^r)=\chi(E^{r+1})$ for all $r\in\N$. Therefore, it follows that if $\{(E^r_{p,q})_{p,q\in \Z}\}_{r\in \N}$ converges to a graded abelian group $(G_n)_{n\in\Z}$ then $\chi(E^1)=\chi(G)$.

This simple idea suggests that our methods can be effectively applied to compute the M\"obius function of a finite poset in several different ways.

If $P$ is a finite poset, $h(P)$ will denote the height of $P$, that is the maximum of the cardinalities of the chains of $P$ minus one.

\begin{prop} \label{prop_Mobius_function}
Let $X$ be a finite $T_0$--space (or equivalently a finite poset) and let $V\subseteq X$ be an open subset. Then
$$\mu(X)=\mu(V)-\sum_{x\in X-V}\mu(\hat U_x).$$
\end{prop}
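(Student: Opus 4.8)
The statement to prove is
$$\mu(X)=\mu(V)-\sum_{x\in X-V}\mu(\hat U_x),$$
where $V$ is open in $X$. My strategy is to recognize $\mu$ as the reduced Euler characteristic and to exploit the filtration-by-antichains machinery already developed, taking Euler characteristics of the spectral sequence of Theorem \ref{theo:principal} (or, more directly, of the long exact sequence of a pair). Since $V$ is open, $D=X-V$ is closed; but more importantly, the key observation should be that for $x\in X-V$ we have $\hat C_x^X=\hat U_x$ is \emph{not} quite right in general, so instead I would argue by induction on $\#(X-V)$, peeling off one maximal element of $X-V$ at a time.

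First I would set up the inductive step. Let $x_0$ be a maximal element of $X-V$; then $D=\{x_0\}$ is an antichain in $X$, and $X-D=X-\{x_0\}$ is open in $X$ and contains $V$ (it is actually $V$ together with the rest of $X-V$). By Proposition \ref{prop:+ hom C_x} applied to the antichain $D=\{x_0\}$, we get $H_n(X,X-\{x_0\})\cong \tilde H_{n-1}(\hat C_{x_0}^X)$ for all $n$. Now since $x_0$ is maximal in $X-V$ \emph{and} the complement of $x_0$ contains $V$ which is open, the set $F_{x_0}\cap(X-V)=\{x_0\}$ except... let me be careful: what I actually want is that $\hat C_{x_0}^X=\hat U_{x_0}$, i.e. that $x_0$ has no elements strictly above it in $X$. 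This holds if $x_0$ is maximal in $X$, not merely in $X-V$. So the cleaner inductive ordering is: pick $x_0$ a maximal element of $X-V$ among all of $X$ — but such need not exist. The honest fix is to note $\hat F_{x_0}\subseteq V$ (since $x_0$ is maximal in $X-V$), and $V$ open means $\hat F_{x_0}^X$ could be nonempty. So instead I would use that $\hat C_{x_0}^{X}=\hat U_{x_0}\cup\hat F_{x_0}$, but $\hat F_{x_0}\subseteq V$ which is open, hence a cone-like retraction argument shows $\hat C_{x_0}$ is homotopy equivalent to... this is the subtle point.

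The cleaner route, which I expect to be the intended one: take $x_0$ a \emph{minimal} element of $V$ is wrong too; rather, observe that the Euler characteristic is additive over the pair, so
$$\tilde\chi(X)=\tilde\chi(X-D)+\sum_{n}(-1)^n\,\mathrm{rg}\,\tilde H_n(X,X-D)=\tilde\chi(X-D)+\sum_{x\in D}\sum_n(-1)^n\,\mathrm{rg}\,\tilde H_{n-1}(\hat C_x^X)$$
for \emph{any} antichain $D$; I would take $D=X-V$ if it happens to be an antichain, and in general induct. The final simplification needs $\hat C_x^X=\hat U_x$ (up to homotopy, so up to $\tilde\chi$): since $V$ is open and $x\in X-V$, every element $>x$ lies in $F_x$, and $F_x\setminus\{x\}$... no. The actual key fact is: $\hat C_x^X$ deformation retracts onto $\hat U_x$ when everything above $x$ is "new". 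I believe the right inductive order is by taking $x_0$ maximal in $X-V$; then in $X$, $\hat F_{x_0}\subseteq V$, and one shows $\hat C_{x_0}^X\simeq \hat U_{x_0}$ because $\hat F_{x_0}$, being open in $\hat C_{x_0}$ and... Let me just commit: the induction removes a maximal element $x_0$ of $X-V$; one verifies directly that $\hat C_{x_0}^X$ is homotopy equivalent to $\hat U_{x_0}$ (the points of $\hat F_{x_0}$ can be stripped as beat points, or one uses that $C_{x_0}$ is contractible and $\hat F_{x_0}\subseteq V$ forces the relevant retraction), so that $\tilde\chi$ of the pair equals $-\mu(\hat U_{x_0})$ with the correct sign $(-1)^{?}$; tracking that sign via $H_n(X,X-\{x_0\})\cong\tilde H_{n-1}(\hat U_{x_0})$ gives precisely $-\mu(\hat U_{x_0})$ contributed to $\mu(X)-\mu(X-\{x_0\})$. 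Summing the telescoping identity $\mu(X)-\mu(X-\{x_0\})=-\mu(\hat U_{x_0})$ over all of $X-V$ and invoking the inductive hypothesis on $X-\{x_0\}\supseteq V$ finishes the proof.

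\textbf{Main obstacle.} The delicate point is justifying $\hat C_x^X\simeq\hat U_x$ (or at least $\tilde\chi(\hat C_x^X)=\tilde\chi(\hat U_x)$) for $x$ maximal in $X-V$, given only that $V$ is open; this is where openness of $V$ is genuinely used, via the fact that $\hat F_x\subseteq V$ together with a Stong-type beat-point or contractibility-of-$C_x$ argument. Everything else — additivity of Euler characteristic over pairs, Proposition \ref{prop:+ hom C_x}, and the telescoping induction — is routine.
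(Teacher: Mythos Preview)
Your inductive strategy is sound and in fact works, but you are overcomplicating the ``main obstacle'': it dissolves immediately. If $x_0$ is maximal in $X-V$ and $V$ is open, then $x_0$ is maximal in \emph{all of $X$}. Indeed, suppose $y>x_0$; by maximality in $X-V$ we have $y\in V$, and since $V$ is open $U_y\subseteq V$, whence $x_0\in U_y\subseteq V$, contradicting $x_0\in X-V$. Thus $\hat F_{x_0}^X=\varnothing$ and $\hat C_{x_0}^X=\hat U_{x_0}$ exactly, with no homotopy or beat-point argument needed. With this in hand your telescoping induction goes through: Proposition~\ref{prop:+ hom C_x} gives $H_n(X,X-\{x_0\})\cong\tilde H_{n-1}(\hat U_{x_0})$, additivity of Euler characteristic on the long exact sequence of the pair yields $\mu(X)=\mu(X-\{x_0\})-\mu(\hat U_{x_0})$, and since $x_0$ is maximal in $X$ one has $\hat U_x^{X-\{x_0\}}=\hat U_x^X$ for every $x\in(X-V)-\{x_0\}$, so the inductive hypothesis applies cleanly.

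The paper's proof is essentially the same idea executed all at once rather than one point at a time: it sets $X_0=V$ and $X_p=V\cup\{x\in X-V:h(U_x)<p\}$, checks that for $x\in D_p$ one has $\hat C_x^{X_p}=\hat U_x$ (same openness argument, now inside $X_p$), and then reads off the formula from the Euler characteristic of the $E^1$ page of the spectral sequence of Theorem~\ref{theo:principal}. Your approach is more elementary in that it only uses the long exact sequence of a pair and avoids the spectral sequence machinery; the paper's route has the advantage of being a direct illustration of Theorem~\ref{theo:principal} and of making the structure of the computation visible in one shot. Both proofs hinge on exactly the same use of openness of $V$, which you correctly singled out as the crux but did not quite pin down.
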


\begin{proof}
Let $X_0=V$. For $p\in \N$, let $X_p=\{x\in X-X_0 \tq h(U_x)<p \}$ and let $D_p=X_p-X_{p-1}$. Clearly $(X_p)_{p\in\N_0}$ is a filtration of $X$ which is induced by antichains. Hence, theorem \ref{theo:principal} applies and we obtain a spectral sequence $\{(E^r_{p,q})_{p,q\in \Z}\}_{r\in \N}$  that converges to $H_*(X)$ such that
\begin{itemize}
\item $E^1_{p,q}=0$ for every $p\leq -1$.
\item $E^1_{0,q}=H_q(X_0)$.
\item $E^1_{p,q}=\bigoplus\limits_{x\in D_p}\tilde{H}_{p+q-1}(\hat{C}^{X_p}_x)$ for $p\geq 1$.
\end{itemize}
Note that, if $p\in\N$ and $x\in D_p$ then $\hat{C}_x^{X_p}=\hat{U}_x$ since $V$ is an open set. Thus,
\begin{displaymath}
\begin{array}{rcl}
\mu(X) & = & \displaystyle \widetilde\chi(X)=\chi(X)-1=-1+\chi(E^1)=-1+\sum_{p\in\Z}\sum_{q\in\Z}(-1)^{p+q}\rg(E^1_{p,q})= \\
& = & \displaystyle -1+\sum_{q\in\Z}(-1)^{q}\rg(H_q(X_0))+\sum_{p\in\N}\sum_{q\in\Z}(-1)^{p+q}\rg\left(\bigoplus\limits_{x\in D_p}\tilde{H}_{p+q-1}(\hat{C}^{X_p}_x)\right) = \\ 
& = & \displaystyle -1 + \chi(X_0) + \sum_{p\in\N}\sum_{q\in\Z}(-1)^{p+q}\sum_{x\in D_p}\rg(\tilde{H}_{p+q-1}(\hat{U}_x)) = \\ 
& = & \displaystyle -1 + \chi(X_0) + \sum_{p\in\N}\sum_{x\in D_p}\sum_{q\in\Z}(-1)^{p+q}\rg(\tilde{H}_{p+q-1}(\hat{U}_x)) = \\
& = & \displaystyle \widetilde\chi(X_0) - \sum_{p\in\N}\sum_{x\in D_p}\widetilde\chi(\hat{U}_x) = \mu(V)-\sum_{p\in\N}\sum_{x\in D_p}\mu(\hat U_x) = \\
& = & \displaystyle \mu(V)-\sum_{x\in X-V}\mu(\hat U_x).
\end{array}
\end{displaymath}
\end{proof}

\begin{coro}
Let $X$ be a finite $T_0$--space (or equivalently a finite poset).
\begin{enumerate}
\item Let $A\subseteq X$ be a contractible open subspace. Then
$$\mu(X)=-\sum_{x\in X-A}\mu(\hat U_x).$$
\item Let $X_0$ be the set of minimal points of $X$. Then
$$\mu(X)=\#X_0-1-\sum_{x\in X-X_0}\mu(\hat U_x).$$
\end{enumerate}
\end{coro}

Note that the second formula of this corollary can also be deduced by partitioning the set of chains of $X$ according to the maximum element of each chain and noting that the chains of cardinality $n$ of $\hat U_x$ are in bijection with the chains of cardinality $n+1$ of $U_x$.

Now we will apply the previous results to give a different proof of theorem 6.1 of \cite{BjoWal}. To this end we need a couple of lemmas.

Recall that a subset $C$ of a poset $P$ is convex if $x<y<z$ and $x,z\in C$ imply $y\in C$.

\begin{lemma} \label{lemma_Fa_P-C}
Let $P$ be a finite poset and let $C$ be a convex subset of $P$. Let $a\in C$. Then 
$$\mu(\hat F_a -C)=\sum_{\substack{y\in C \\ y\geq a}} \mu (\hat F_y).$$
\end{lemma}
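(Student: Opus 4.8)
The plan is to obtain the identity from the ``closed-set'' form of Proposition~\ref{prop_Mobius_function}. Since a poset and its opposite have the same order complex, $\mu(Q)=\mu(Q^{\textnormal{op}})$ for every finite poset $Q$; applying Proposition~\ref{prop_Mobius_function} to $X^{\textnormal{op}}$ (whose open subsets are precisely the up-sets of $X$, and in which the role of $\hat{U}_x$ is played by $\hat{F}_x$) therefore gives, for every finite poset $X$ and every up-set $W\subseteq X$,
$$\mu(X)=\mu(W)-\sum_{x\in X-W}\mu(\hat{F}_x).$$
I would apply this with $X=\hat{F}_a$ and $W=\hat{F}_a-C$, noting that then $X-W=\hat{F}_a\cap C=\{y\in C\tq y>a\}$.

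First I would verify the two facts needed to read the formula off. The point that is not pure bookkeeping is that $W=\hat{F}_a-C$ is an up-set of $\hat{F}_a$: if $w\in\hat{F}_a-C$ and $z\in\hat{F}_a$ with $w<z$, then $z\notin C$, since otherwise $a<w<z$ with $a,z\in C$ would force $w\in C$ by convexity of $C$; this is the only place where convexity and the hypothesis $a\in C$ are used. The second, routine, fact is that for $x\in\hat{F}_a$ one has $\hat{F}^{\hat{F}_a}_x=\hat{F}_x$, because $z>x>a$ forces $z\in\hat{F}_a$, so the summands produced by the formula are indeed the quantities $\mu(\hat{F}_y)$ appearing in the statement.

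Granting these, the displayed formula applied to $X=\hat{F}_a$ and $W=\hat{F}_a-C$ reads
$$\mu(\hat{F}_a)=\mu(\hat{F}_a-C)-\sum_{\substack{y\in C\\ y>a}}\mu(\hat{F}_y),$$
and rearranging, while absorbing $\mu(\hat{F}_a)$ as the $y=a$ summand (legitimate since $a\in C$), yields $\mu(\hat{F}_a-C)=\sum_{y\in C,\,y\geq a}\mu(\hat{F}_y)$, which is the claim. The hard part is really just the convexity check in the second paragraph; once $W$ is known to be an up-set of $\hat{F}_a$, everything else is a direct application of Proposition~\ref{prop_Mobius_function}.
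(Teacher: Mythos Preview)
Your proof is correct and is essentially the same as the paper's: the paper applies Proposition~\ref{prop_Mobius_function} directly to $X=(\hat F_a)^{\textnormal{op}}$ with $V=(\hat F_a-C)^{\textnormal{op}}$, while you first record the general dual form for up-sets and then specialize to $X=\hat F_a$, $W=\hat F_a-C$; the convexity check and the identification $\hat F_x^{\hat F_a}=\hat F_x$ are identical in both arguments.
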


\begin{proof}
Applying \ref{prop_Mobius_function} with $X=(\hat F_a)^\textnormal{op}$ and $V=(\hat F_a -C)^\textnormal{op}$ yields
$$\mu((\hat F_a)^\textnormal{op})=\mu((\hat F_a -C)^\textnormal{op})-\sum_{x\in (\hat F_a)^\textnormal{op} \cap C}\mu (\hat U_x^X).$$
(note that $V$ is an open subset of $X$ since given $z\in V$ and $w\leq z$ we get that $w\leq z < a$, $a\in C$ and $z\notin C$, and thus $w\notin C$ since $C$ is convex).
Hence,
$$\mu(\hat F_a -C)=\mu(\hat F_a)+\sum_{y\in \hat F_a \cap C}\mu (\hat F_y^{\hat F_a}) = \mu(\hat F_a)+ \sum_{\substack{y\in C \\ y> a}} \mu (\hat F_y) = \sum_{\substack{y\in C \\ y\geq a}} \mu (\hat F_y).$$
\end{proof}

If $X$ and $Y$ are posets, the \emph{non-Hausdorff join} $X\oplus Y$ is the poset whose underlying set is the disjoint union $X\sqcup Y$ and whose ordering is defined keeping the ordering within $X$ and $Y$ and setting $x\leq y$ for every $x\in X$ and $y\in Y$ (cf. \cite{BarLN}).

\begin{lemma} \label{lemma_mu_join}
Let $X$ and $Y$ be posets. Then $\mu(X\oplus Y) = -\mu(X)\mu(Y)$.
\end{lemma}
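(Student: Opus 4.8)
The plan is to recognize the non-Hausdorff join $X \oplus Y$ as a space whose order complex is the simplicial join $\mathcal{K}(X) * \mathcal{K}(Y)$, and then use the standard relation between the reduced Euler characteristic of a join and the reduced Euler characteristics of the factors. Concretely, a chain of $X \oplus Y$ is the disjoint union of a (possibly empty) chain of $X$ and a (possibly empty) chain of $Y$, with the $X$-part sitting below the $Y$-part; so $\ch(X \oplus Y) \cong \ch(X) \times \ch(Y)$ where the empty chain is allowed in each factor. Counting a chain of cardinality $k$ in $X$ together with a chain of cardinality $\ell$ in $Y$ as a chain of cardinality $k + \ell$ in $X \oplus Y$, one sees immediately that the generating function which records (number of chains of odd cardinality) minus (number of chains of even cardinality), i.e. $-\mu$, is multiplicative: $-\mu(X \oplus Y) = (-\mu(X))(-\mu(Y))$, which rearranges to $\mu(X \oplus Y) = -\mu(X)\mu(Y)$.

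First I would set up the bijection $\ch(X \oplus Y) \to \ch(X) \times \ch(Y)$ explicitly, sending a chain $s$ to $(s \cap X,\, s \cap Y)$; this is well-defined and bijective precisely because every element of $X$ is below every element of $Y$ in $X \oplus Y$, so any subset of the form (chain of $X$) $\sqcup$ (chain of $Y$) is automatically a chain, and conversely any chain splits this way. Next, recalling from the preliminaries that $\mu(P)$ equals the number of chains of $P$ of odd cardinality minus the number of chains of even cardinality (empty chain counting as even), I would write $\mu(P) = -\sum_{s \in \ch(P)} (-1)^{\#s}$, where the sum ranges over all chains including the empty one. Under the bijection, $\#s = \#(s \cap X) + \#(s \cap Y)$, so
\[
\sum_{s \in \ch(X \oplus Y)} (-1)^{\#s} = \Big(\sum_{t \in \ch(X)} (-1)^{\#t}\Big)\Big(\sum_{u \in \ch(Y)} (-1)^{\#u}\Big) = (-\mu(X))(-\mu(Y)) = \mu(X)\mu(Y),
\]
and hence $\mu(X \oplus Y) = -\mu(X)\mu(Y)$.

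I do not expect any serious obstacle here; the only point requiring a little care is the bookkeeping of the empty chain, which must be included in each factor's sum for the product formula to come out right — this is exactly why the empty chain is, by the paper's convention, a $(-1)$-chain and counts with sign $(-1)^0 = +1$ in $\sum (-1)^{\#s}$. Alternatively, and perhaps more in the spirit of this section, one could deduce the result topologically: $\widetilde\chi(\mathcal{K}(X) * \mathcal{K}(Y)) = -\widetilde\chi(\mathcal{K}(X))\cdot\widetilde\chi(\mathcal{K}(Y))$ is the classical Euler-characteristic formula for joins, and since $\mu(P) = \widetilde\chi(P) = \widetilde\chi(\mathcal{K}(P))$ and $\mathcal{K}(X \oplus Y) = \mathcal{K}(X) * \mathcal{K}(Y)$, the claim follows at once. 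Either route is short; I would present the chain-counting argument as the primary proof since it is self-contained and uses only the definition of $\mu$ recalled earlier in the paper.
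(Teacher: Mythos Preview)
Your proposal is correct and takes essentially the same approach as the paper: both arguments use the bijection $\ch(X\oplus Y)\cong \ch(X)\times\ch(Y)$ together with additivity of cardinality to deduce the multiplicativity of $-\mu$. The paper phrases this by splitting into counts $\mu_o$ and $\mu_e$ of chains of odd and even cardinality and expanding $-(\mu_o(X)-\mu_e(X))(\mu_o(Y)-\mu_e(Y))$, which is exactly your sum $\sum_s(-1)^{\#s}$ written out componentwise.
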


\begin{proof}
Let $\mu_o(X)$ and $\mu_o(Y)$ denote the number of chains of odd cardinality of $X$ and $Y$ respectively and let $\mu_e(X)$ and $\mu_e(Y)$ denote the number of chains of even cardinality of $X$ and $Y$ respectively. Then
\begin{displaymath}
\begin{array}{rcl}
-\mu(X)\mu(Y) & = & -(\mu_o(X)-\mu_e(X))(\mu_o(Y)-\mu_e(Y)) = \\ 
& = & (\mu_o(X)\mu_e(Y)+\mu_e(X)\mu_o(Y))-(\mu_e(X)\mu_e(Y)+\mu_o(X)\mu_o(Y)) = \\
& = & \mu_o(X\oplus Y)-\mu_e(X\oplus Y)=\mu(X\oplus Y).
\end{array}
\end{displaymath}
\end{proof}

Now we state theorem 6.1 of \cite{BjoWal} and apply our methods to give an alternative proof of it.

\begin{theo}[Bj\"orner--Walker]
Let $P$ be a finite poset and let $C$ be a convex subset of $P$. Then
$$\mu(P)=\mu(P-C)+\sum_{\substack{x,y\in C \\ x\leq y}} \mu(\hat U_x) \mu(\hat F_y).$$
\end{theo}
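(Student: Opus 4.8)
The plan is to derive the Björner–Walker formula from Proposition \ref{prop_Mobius_function} and Lemma \ref{lemma_Fa_P-C} by a double counting argument, using the key multiplicativity Lemma \ref{lemma_mu_join}. The first step is to apply Proposition \ref{prop_Mobius_function} to an appropriate open subset of $P$. Since $P-C$ is not necessarily open, one instead considers a filtration-type argument: the natural candidate is to filter $P$ so that the convex set $C$ is added "on top" in a suitable order-theoretic sense. Concretely, I would try to peel off the points of $C$ one at a time, or rather split the computation of $\mu(P)$ into a contribution from $P-C$ and a contribution from each point of $C$.

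**First I would** set $V$ to be an open subset of $P$ related to $P-C$. The cleanest route is: let $U$ be the smallest open set containing $P-C$, or better, directly apply Proposition \ref{prop_Mobius_function} in a form that handles the non-open complement. Actually, the more promising approach is to observe that for $x \in C$, the minimal open set $\hat U_x^P$ decomposes relative to $C$. I would write $\hat U_x^P$ using the convexity of $C$: the elements below $x$ split into those in $C$ and those outside. Then I would invoke Lemma \ref{lemma_Fa_P-C} (and its order-dual, obtained by passing to $P^{\textnormal{op}}$) to rewrite $\mu(\hat U_x^P - C)$ as a sum $\sum_{y \in C,\ y \leq x} \mu(\hat U_y)$. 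Combining with a decomposition of $\hat U_x$ itself via the non-Hausdorff join — namely $\hat U_x = (\hat U_x - C) \oplus (\text{something})$ or a sum over the "interface" — and applying Lemma \ref{lemma_mu_join} to turn joins into products $-\mu(\hat U_x)\mu(\hat F_y)$, the cross term $\sum_{x \leq y \in C} \mu(\hat U_x)\mu(\hat F_y)$ should emerge.

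**The key steps in order** are: (1) choose the open set $V$ and the filtration so that Proposition \ref{prop_Mobius_function} gives $\mu(P)$ as $\mu(P-C)$ plus correction terms indexed by $x \in C$; (2) for each $x \in C$, express the correction term $\mu(\hat U_x^{(\cdot)})$ in the relevant subposet using convexity of $C$, splitting $\hat U_x$ at $C$; (3) apply Lemma \ref{lemma_Fa_P-C} and its dual to reduce sums over $\hat F_a - C$ and $\hat U_a - C$ to sums over $C$; (4) use Lemma \ref{lemma_mu_join} to identify products $\mu(\hat U_x)\mu(\hat F_y)$; (5) collect terms and check the double sum reindexes to $\sum_{x \leq y,\ x,y \in C} \mu(\hat U_x)\mu(\hat F_y)$.

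**The hard part will be** getting the filtration and the bookkeeping exactly right so that the complement $P-C$, which is not open, is correctly accounted for — one must be careful that Proposition \ref{prop_Mobius_function} is applied to a genuinely open $V$, and then telescope over the points of $C$ in a way compatible with convexity (so that at each stage the relevant "$\hat U_x$" is the one computed inside the current subposet and equals the ambient one, which is exactly where convexity of $C$ is used, as in the parenthetical remark in the proof of Lemma \ref{lemma_Fa_P-C}). The combinatorial identity at the end, matching the sum produced by iterating Proposition \ref{prop_Mobius_function} against the target double sum $\sum_{x \leq y} \mu(\hat U_x)\mu(\hat F_y)$, will require the join-multiplicativity of $\mu$ to convert the nested $\hat U$/$\hat F$ contributions into products; verifying the signs and that no term is double-counted is the main technical obstacle, but it is routine bookkeeping once the filtration is fixed.
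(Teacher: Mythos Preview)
Your proposal has the right supporting lemmas in hand (Lemma~\ref{lemma_Fa_P-C} and Lemma~\ref{lemma_mu_join}) but misidentifies the object to which they should be applied, and this is a genuine gap. You try to work with $\hat U_x$ and suggest a decomposition of the form $\hat U_x = (\hat U_x - C)\oplus(\text{something})$; this is false in general. For $x\in C$ convexity only guarantees that $\hat U_x - C$ is open in $\hat U_x$, not that every element of $\hat U_x - C$ lies below every element of $\hat U_x\cap C$. (Take $P=\{a,b,c,x\}$ with $a<x$, $b<c<x$ and $C=\{c,x\}$: then $a\in \hat U_x - C$, $c\in \hat U_x\cap C$, but $a\not< c$.) Consequently Lemma~\ref{lemma_mu_join} does not apply to $\hat U_x$, and the product $\mu(\hat U_x)\mu(\hat F_y)$ cannot be produced the way you describe. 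Relatedly, your step~(1) asks Proposition~\ref{prop_Mobius_function} to produce $\mu(P-C)$ as the leading term, but that proposition requires $V$ open and there is no open $V$ that does this in general; the correction terms it produces are always $\mu(\hat U_x)$, which, as just noted, do not factor.

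The paper's proof bypasses both obstacles by returning to Theorem~\ref{theo:principal} itself rather than its corollary Proposition~\ref{prop_Mobius_function}. One takes $X_0 = P-C$ (which need not be open) and filters $C$ by the height of $U_x$; the differences are antichains, so the spectral sequence applies. The point is that the Euler-characteristic computation then involves $\mu(\hat C_x^{X_p})$ rather than $\mu(\hat U_x)$, and for $x\in D_p\subseteq C$ one has \emph{exactly} a non-Hausdorff join
\[
\hat C_x^{X_p} \;=\; \hat U_x \,\oplus\, (\hat F_x \cap (P-C)) \;=\; \hat U_x \oplus \hat F_x^{P-C},
\]
because every element below $x$ is already in $X_p$ (smaller height) while the only elements above $x$ present in $X_p$ are those outside $C$, and anything in $\hat U_x$ is automatically below anything in $\hat F_x$. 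Now Lemma~\ref{lemma_mu_join} gives $\mu(\hat C_x^{X_p}) = -\mu(\hat U_x)\,\mu(\hat F_x^{P-C})$, and Lemma~\ref{lemma_Fa_P-C} expands the second factor as $\sum_{y\in C,\,y\ge x}\mu(\hat F_y)$, yielding the desired double sum immediately. The missing idea in your plan is precisely this: the join structure lives on $\hat C_x^{X_p}$, not on $\hat U_x$, and to see $\hat C_x^{X_p}$ you must invoke the spectral sequence with $X_0=P-C$ directly.
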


\begin{proof}
We consider $P$ as a finite $T_0$--space. Let $X_0=P-C$. For $p\in \N$, let $X_p=X_0\cup\{x\in C \tq h(U_x)< p\}$ and let $D_p=X_p-X_{p-1}$.
Clearly $(X_p)_{p\in\N_0}$ is a filtration of $P$ which is induced by antichains. Hence, theorem \ref{theo:principal} applies and we obtain a spectral sequence $\{(E^r_{p,q})_{p,q\in \Z}\}_{r\in \N}$  that converges to $H_*(P)$ such that
\begin{itemize}
\item $E^1_{p,q}=0$ for every $p\leq -1$.
\item $E^1_{0,q}=H_q(X_0)$.
\item $E^1_{p,q}=\bigoplus\limits_{x\in D_p}\tilde{H}_{p+q-1}(\hat{C}^{X_p}_x)$ for $p\geq 1$.
\end{itemize}
Then, proceeding as in the proof of \ref{prop_Mobius_function} we obtain that
\begin{displaymath}
\mu(P) = \mu(P-C)-\sum_{p\in\N}\sum_{x\in D_p}\mu(\hat{C}^{X_p}_x).
\end{displaymath}

Note that $\hat{C}^{X_p}_x=\hat U_x \oplus \hat F_x^{P-C}$ for $x\in D_p$. Hence, from \ref{lemma_Fa_P-C} and \ref{lemma_mu_join} we obtain that
$$\mu(\hat{C}^{X_p}_x)=-\mu (\hat U_x)\mu (\hat F_x^{P-C})=-\mu (\hat U_x) \sum_{\substack{y\in C \\ y\geq x}} \mu (\hat F_y).$$
Thus,
\begin{displaymath}
\begin{array}{rcl}
\mu(P) & = & \displaystyle \mu(P-C)-\sum_{p\in\N}\sum_{x\in D_p}\mu(\hat{C}^{X_p}_x) = \mu(P-C)+\sum_{p\in\N}\sum_{x\in D_p} \left( \mu (\hat U_x) \sum_{\substack{y\in C \\ y\geq x}} \mu (\hat F_y) \right) = \\ 
& = & \displaystyle \mu(P-C)+\sum_{x\in C} \left( \mu (\hat U_x) \sum_{\substack{y\in C \\ y\geq x}} \mu (\hat F_y) \right) = 
\mu(P-C)+\sum_{\substack{x,y\in C \\ x\leq y}} \mu(\hat U_x) \mu(\hat F_y).
\end{array}
\end{displaymath}
\end{proof}

Clearly, this technique can be applied to obtain and prove many different formulas of this type for computing the M\"obius function of posets.

\section{Further generalizations and applications}

In this section we will analyse some generalizations of the results of section \ref{main_section} and we will show how these generalizations are useful in applications.

First of all, it is easy to see that the definitions of $f$--chain complex and $f$--homology groups given in section \ref{sect_prelim} and their relative and reduced versions can be generalized to Alexandroff T$_0$--spaces. Clearly, the $f$--homology groups will be isomorphic to the singular homology groups in this more general setting as well \cite{McC}.

Secondly, lemmas \ref{lemma:isos phi} and \ref{lemma:partial} and proposition \ref{prop:+ hom C_x} remain true for Alexandroff $T_0$--spaces. Moreover, theorem \ref{theo:principal} also holds for Alexandroff $T_0$--spaces provided that the filtration $\mathcal{F}=\{X_p:p\in \Z\}$ of $X$ is induced by antichains and satisfies the following condition
\begin{displaymath}
\begin{array}{cl}
\textnormal{($\ast$)} & \textnormal{For all $n\in\N$ there exists $m\in\N$ such that $\tilde{H}_{n}(\hat{C}^{X_p}_x)=0$ for all $p\geq m$ and } \\ & \textnormal{for all $x\in D_p=X_p-X_{p-1}$.}
\end{array}
\end{displaymath}
This condition guarantees that the spectral sequence will converge and is satisfied, for example, if the filtration $\mathcal{F}$ consists of only a finite number of distinct spaces (this is applied in \ref{theo:principal_cover_version}), but is also valid in a broad range of situations (such as that of \ref{coro_quasicel_relativo}).

Also, the following relative version of \ref{theo:principal} holds.

\begin{theo} \label{theo_principal_generalizado}
Let $X$ be an Alexandroff $T_0$--space and let $A\subseteq X$ be a subspace. Let $\mathcal{F}=\{X_p:p\in \Z\}$ be a filtration of $X$ with $X_0=A$ such that $\mathcal{F}$ is induced by antichains. For each $p\in \N$, let $D_p=X_p-X_{p-1}$. Suppose that the filtration $\mathcal{F}$ satisfies condition $(\ast)$ above.

Then there is a spectral sequence $\{(E^r_{p,q})_{p,q\in \Z},(d^r_{p,q})_{p,q\in \Z}\}_{r\in \N}$ that converges to $H_*(X,A)$ such that:
\begin{itemize}
\item $E^1_{p,q}=0$ for every $p\leq 0$.
\item $E^1_{p,q}=\bigoplus\limits_{x\in D_p}\tilde{H}_{p+q-1}(\hat{C}^{X_p}_x)$ for $p\geq 1$.
\item The morphisms $d^1_{p,q}:E^1_{p,q}\longrightarrow E^1_{p-1,q}$ are defined in the following way:
\begin{itemize}
\item If $p\leq 1$ and $q\in \Z$, then $d^1_{p,q}$ is the trivial homomorphism.
\item If $p\geq 2$ and $q\leq -p$, then $d^1_{p,q}$ is the trivial homomorphism.
\item If $p\geq 2$ and $q\geq 1-p$, then $d^1_{p,q}:\bigoplus\limits_{x\in D_p}\tilde{H}_{p+q-1}(\hat{C}^{X_p}_x)\longrightarrow\bigoplus\limits_{y\in D_{p-1}}\!\!\tilde{H}_{p+q-2}(\hat{C}^{X_{p-1}}_y)$ is defined by
$$d^1_{p,q}\left(\left(\left[\sum\limits_{i=1}^{l_x}a^x_i s^x_i\right]\right)_{x\in D_p}\right)=\left(\left[\sum\limits_{x\in D_p}\sum\limits_{s^x_i\ni y}a^x_i\sgn_{s^x_i}(y)(s^x_i-\{y\})\right]\right)_{y\in D_{p-1}}$$ 
where for every $x\in D_p$, $l_x\in \N$, and for every $i=\{1,\dots,l_x\}$, $a^x_i\in \Z$ and $s^x_i\in \tilde{C}_{p+q-1}(\hat{C}^{X_p}_x)$.
\end{itemize}
\end{itemize}
\end{theo}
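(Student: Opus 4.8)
The plan is to follow the proof of Theorem \ref{theo:principal} almost verbatim, the only new ingredients being the passage to relative homology and the use of hypothesis $(\ast)$ to ensure convergence of a filtration that need not be finite. First I would filter the $f$--relative chain complex $C^f(X,A)=C^f(X)/C^f(A)$ by the subcomplexes $\mathscr{F}_p:=C^f(X_p)/C^f(A)=C^f(X_p,A)$ for $p\in\Z$. Since $X_0=A$ we get $\mathscr{F}_p=0$ for every $p\leq 0$, and since $\{X_p\}_{p\in\Z}$ exhausts $X$, every chain of $(X,A)$ lies in some $\mathscr{F}_p$; thus the filtration is bounded below and exhaustive. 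Let $\{(\tilde{E}^r_{p,q}),(\tilde{d}^r_{p,q})\}_{r\in\N}$ be the associated spectral sequence, so that $\tilde{E}^1_{p,q}=H_{p+q}(\mathscr{F}_p/\mathscr{F}_{p-1})$ and $\tilde{d}^1_{p,q}=j_*\partial$ with $j$ and $\partial$ as in the proof of \ref{theo:principal}. For $p\leq 0$ one has $\mathscr{F}_p/\mathscr{F}_{p-1}=0$, while for $p\geq 1$ there is a natural isomorphism $\mathscr{F}_p/\mathscr{F}_{p-1}\cong C^f(X_p,X_{p-1})$ (using $A\subseteq X_{p-1}\subseteq X_p$), so that $\tilde{E}^1_{p,q}=H_{p+q}(X_p,X_{p-1})$.

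Next I would check convergence. Since $D_p$ is an antichain, Proposition \ref{prop:+ hom C_x} (which remains valid for Alexandroff spaces as noted above) gives $\tilde{E}^1_{p,q}\cong\bigoplus_{x\in D_p}\tilde{H}_{p+q-1}(\hat{C}^{X_p}_x)$; in particular $\tilde{E}^1_{p,q}=0$ whenever $p\leq 0$, and also whenever $p+q\leq -1$ since reduced homology in degree $p+q-1\leq -2$ is trivial. The outgoing differential $\tilde{d}^r\colon\tilde{E}^r_{p,q}\to\tilde{E}^r_{p-r,q+r-1}$ vanishes for $r>p$ because $\tilde{E}^1_{p',q'}=0$ whenever $p'\leq 0$; the incoming differential has source $\tilde{E}^r_{p+r,q-r+1}$, a subquotient of $\bigoplus_{x\in D_{p+r}}\tilde{H}_{p+q}(\hat{C}^{X_{p+r}}_x)$, which by hypothesis $(\ast)$ applied to the integer $p+q$ vanishes once $r$ is large. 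Hence each $\tilde{E}^r_{p,q}$ stabilizes, and since the filtration is bounded below and exhaustive the spectral sequence converges to $H_*(C^f(X,A))=H_*(X,A)$.

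I would then transport the spectral sequence to the $E^1$--page of the statement exactly as in \ref{theo:principal}: set $E^1_{p,q}=0$ for $p\leq 0$ and $E^1_{p,q}=\bigoplus_{x\in D_p}\tilde{H}_{p+q-1}(\hat{C}^{X_p}_x)$ for $p\geq 1$, together with isomorphisms $\theta_{p,q}\colon E^1_{p,q}\to\tilde{E}^1_{p,q}$ which for $p\geq 1$ are the composites $\bigl(\biguplus_{x\in D_p}i^x_{p+q}\bigr)_*\circ\bigl(\bigoplus_{x\in D_p}\partial^x\bigr)^{-1}$ furnished by Lemma \ref{lemma:isos phi} and Proposition \ref{prop:+ hom C_x}, and which are the identity for $p\leq 0$; then define $d^1_{p,q}=\theta_{p-1,q}^{-1}\,\tilde{d}^1_{p,q}\,\theta_{p,q}$. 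When $p\leq 1$ the target $E^1_{p-1,q}$ is $0$, so $d^1_{p,q}=0$; in particular the row $p=1$ is now trivial, which is the one genuine simplification over \ref{theo:principal} and is forced by the equality $X_0=A$. When $p\geq 2$ and $q\leq -p$ the target $E^1_{p-1,q}=\bigoplus_{y\in D_{p-1}}\tilde{H}_{p+q-2}(\hat{C}^{X_{p-1}}_y)$ vanishes because $p+q-2\leq -2$, so again $d^1_{p,q}=0$. Finally, for $p\geq 2$ and $q\geq 1-p$ I would reproduce the diagram chase of \ref{theo:principal}, using the commuting square involving $j_*$ and the isomorphism $(\phi_{p+q-1})_*$ from Lemma \ref{lemma:isos phi}, the inclusion maps $\tau^x_{p+q-1}$, naturality of the long exact sequences, and Lemma \ref{lemma:partial}, which together produce the asserted formula for $d^1_{p,q}$.

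The main obstacle is the convergence step: for a filtration built from infinitely many distinct subspaces one cannot appeal to the finiteness of $X$ as in \ref{theo:principal}, and it is precisely hypothesis $(\ast)$ that supplies the needed vanishing of the $\tilde{E}^1$--groups far out along the filtration in each fixed total degree. Everything else---the decomposition of $\tilde{E}^1_{p,q}$ via Proposition \ref{prop:+ hom C_x} and the explicit computation of $d^1_{p,q}$ via Lemma \ref{lemma:partial}---carries over word for word, the bookkeeping only becoming lighter since the column $p=0$ has disappeared.
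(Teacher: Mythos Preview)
Your proposal is correct and follows essentially the same approach as the paper, which in fact omits the proof entirely with the remark ``The proof of this theorem is similar to that of \ref{theo:principal} and will be omitted.'' You have supplied precisely the adaptation the authors had in mind---filtering the relative complex, invoking Proposition~\ref{prop:+ hom C_x} and Lemma~\ref{lemma:partial} as before, and using condition $(\ast)$ to handle convergence in the possibly infinite case---and you have in fact been more explicit about the convergence argument than the paper itself.
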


The proof of this theorem is similar to that of \ref{theo:principal} and will be omitted.

Also, the definition of quasicellular space (\ref{def:quasi}) can be generalized as follows.

\begin{definition}\label{def:quasi_relativo}
Let $X$ be a locally finite $T_0$--space and let $A\subseteq X$ be a subspace. We say that $(X,A)$ is a \emph{relative quasicellular pair} if $A$ is open in $X$ and there exists an order preserving map $\rho:X-A\longrightarrow \N_0$, which will be called \emph{quasicellular morphism for $(X,A)$}, such that
\begin{enumerate}[(1)]
\item The set $\{x\in X-A:\rho(x)=n\}$ is an antichain for every $n\in \N_0$. 
\item For every $x\in X-A$, the reduced homology of $\hat{U}_x^X$ is concentrated in degree $\rho(x)-1$.
\end{enumerate}
\end{definition}

And corollary \ref{coro_quasicel} can be generalized accordingly.

\begin{coro} \label{coro_quasicel_relativo}
Let $(X,A)$ be a relative quasicellular pair and let $\rho$ be a quasicellular morphism for $(X,A)$. For each $n\in \N_0$, let $J_n=\{x\in X-A:\rho(x)=n\}$.

Let $C(X,A)=(C_n(X,A),d_n)_{n\in\Z}$ be the chain complex defined by
\begin{itemize}
\item $C_n(X,A)=\bigoplus\limits_{x\in J_n}\tilde{H}_{n-1}(\hat{U}_x)$ for each $n\in \N_0$ and $C_n(X,A)=0$ for $n<0$.
\item For each $n\in \Z$, $d_n$ is the group homomorphism $d^1_{n+1,-1}$ of theorem \ref{theo_principal_generalizado} (applied for the filtration defined by $X_0=A$ and $X_n=X_{n-1}\cup J_{n-1}$ for all $n\in\N$).
\end{itemize}
Then, $H_n(X,A)=H_n(C(X,A))$ for all $n\in\N_0$.

If, in addition, $A$ is contractible, then $\widetilde H_n(X)=H_n(C(X,A))$ for all $n\in\N_0$.
\end{coro}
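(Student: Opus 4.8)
The plan is to imitate the proof of Corollary \ref{coro_quasicel}, using the relative version \ref{theo_principal_generalizado} in place of \ref{theo:principal}. First I would verify that the filtration $\mathscr{F}=\{X_p\}_{p\in\Z}$ given by $X_0=A$ and $X_n=X_{n-1}\cup J_{n-1}$ for $n\in\N$ (and $X_p=\varnothing$ for $p<0$) is a well-defined filtration of $X$ which is induced by antichains: each $X_n\setminus X_{n-1}=J_{n-1}=\{x\in X-A:\rho(x)=n-1\}$ is an antichain by condition (1) of \ref{def:quasi_relativo}, and these sets exhaust $X-A$ since $\rho$ is defined on all of $X-A$. I would also need to check condition $(\ast)$ of the previous section so that \ref{theo_principal_generalizado} genuinely applies: the point is that for $p\geq 1$ and $x\in D_p$ one has $\hat C_x^{X_p}=\hat U_x^X$, because $A=X_0$ is open in $X$ and at each later stage we only add points of degree $\geq$ that of $x$, so no point above $x$ (which would have strictly larger $\rho$-value, by the remark after \ref{def:quasi}) ever enters the filtration below the stage of $x$; hence $\hat C_x^{X_p}$ never acquires points of $\hat F_x$, and it contains all of $\hat U_x^X$ because $\hat U_x^X\subseteq X_p$. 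Then condition (2) of \ref{def:quasi_relativo} says $\tilde H_{p+q-1}(\hat C_x^{X_p})=\tilde H_{p+q-1}(\hat U_x^X)$ vanishes unless $p+q-1=\rho(x)-1=p-1$, i.e.\ unless $q=0$; in particular for each fixed $n$ the group $\tilde H_n(\hat C_x^{X_p})$ is nonzero only when $p=n+1$, which gives $(\ast)$ immediately.

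With $(\ast)$ in hand, \ref{theo_principal_generalizado} produces a spectral sequence converging to $H_*(X,A)$ with $E^1_{p,q}=\bigoplus_{x\in D_p}\tilde H_{p+q-1}(\hat C_x^{X_p})=\bigoplus_{x\in J_{p-1}}\tilde H_{p+q-1}(\hat U_x^X)$ for $p\geq 1$. By the computation just made, $E^1_{p,q}=0$ unless $q=0$, so the first page is concentrated in the single row $q=0$, where it reads $E^1_{p,0}=\bigoplus_{x\in J_{p-1}}\tilde H_{p-2}(\hat U_x^X)=C_{p-1}(X,A)$, and the differentials $d^1_{p,0}\colon E^1_{p,0}\to E^1_{p-1,0}$ are exactly the maps $d^1_{p,-1}$ of \ref{theo_principal_generalizado} after reindexing $q=0\leftrightarrow q=-1$ — which up to the shift in indexing is the definition of $d_{p-1}$ in $C(X,A)$. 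Since there is only one nonzero row, the spectral sequence degenerates at $E^2$ and $H_n(X,A)\cong E^2_{n+1,0}=H_n(C(X,A))$ for all $n\in\N_0$ (and both sides vanish for $n<0$). Finally, if $A$ is contractible then the long exact sequence of the pair $(X,A)$ gives $\tilde H_n(X)\cong H_n(X,A)=H_n(C(X,A))$ for all $n$.

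The only genuinely delicate point is the identification $\hat C_x^{X_p}=\hat U_x^X$ together with the verification of $(\ast)$: one must be careful that openness of $A$ in $X$ is what prevents elements of $\hat F_x$ from sneaking into $X_p$ (a point $y>x$ cannot lie in $A$ if $x\notin A$, since $A$ open means $A$ is downward closed in the Alexandroff order is false — rather $U_y\supseteq U_x\ni x$ forces $x\in A$ once $y\in A$, because $A$ open means $U_y\subseteq A$), and that the $\rho$-monotonicity guarantees points of $\hat F_x$ enter only at strictly later stages. Everything else is a routine transcription of the argument for \ref{coro_quasicel} and the standard fact that a first-page-one-row spectral sequence collapses to the homology of that row, plus the long exact sequence of a pair for the reduced statement.
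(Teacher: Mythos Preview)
Your approach is correct and is essentially the paper's intended proof (the paper does not spell out this corollary, relying on the analogy with Corollary \ref{coro_quasicel}). There is, however, an indexing slip: for $x\in D_p=J_{p-1}$ one has $\rho(x)=p-1$, so the reduced homology of $\hat U_x^X$ is concentrated in degree $\rho(x)-1=p-2$, which forces $p+q-1=p-2$, i.e.\ $q=-1$ (not $q=0$). Thus the single nontrivial row of $E^1$ is $q=-1$, with $E^1_{p,-1}=\bigoplus_{x\in J_{p-1}}\tilde H_{p-2}(\hat U_x^X)=C_{p-1}(X,A)$ and differentials $d^1_{p,-1}=d_{p-1}$; the total degree at position $(n+1,-1)$ is $n$, giving $H_n(X,A)\cong E^2_{n+1,-1}=H_n(C(X,A))$ directly. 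Your ad hoc ``reindexing $q=0\leftrightarrow q=-1$'' is a symptom of this slip rather than a genuine step: once the index is corrected, no reindexing is needed and the identification with the differential $d^1_{n+1,-1}$ named in the statement is immediate. Everything else---the verification that $\hat C_x^{X_p}=\hat U_x^X$ via openness of $A$ and strict $\rho$--monotonicity, the check of condition $(\ast)$, and the appeal to the long exact sequence of the pair for the contractible-$A$ statement---is correct and, in fact, more detailed than the paper's own treatment.
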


\begin{ex} \label{ex_S2_v_S1}
Consider the poset $Z$ defined by the following Hasse diagram.
\[
\xymatrix@R=40pt{
b\bullet\phantom{b}\ar@{-}[d]\ar@{-}[dr] & a\bullet\phantom{a}\ar@{-}[dl]\ar@{-}[d]\ar@{-}[dr]\ar@{-}[drr] & c\bullet\phantom{c}\ar@{-}[d]\ar@{-}[dr] \\
\bullet\ar@{-}[d]\ar@{-}[dr] & \bullet \ar@{-}[dl]\ar@{-}[d] & \bullet & \bullet \\
\bullet & \bullet \\
}
\]
Note that $Z$ is not a cellular poset since $\hat U_a$ does not have the homology of any sphere. Hence, theorem 3.7 of \cite{Min} does not apply. Indeed, $Z$ is not even a quasicellular poset since $\hat U_a$ does not have homology concentrated in any degree.

However, $(Z,U_a)$ is a relative quasicellular pair with quasicellular morphism $\rho:Z-U_a \to \N_0$ defined by $\rho(b)=2$ and $\rho(c)=1$.

Hence, by \ref{coro_quasicel_relativo}, the homology groups of $(Z,U_a)$ can be computed with the following chain complex
\[
\xymatrix@C=40pt{
\cdots \ar[r] & 0 \ar[r] & \Z \ar[r]^{d_2} & \Z \ar[r] & 0 \ar[r] & \cdots}
\]
%where the groups $\Z$ above correspond to degrees $1$ and $2$.

Applying the formulas for the differentials stated in \ref{theo:principal}, it is easy to check that $d_2=0$. Thus, $H_2(Z,U_a)\cong H_1(Z,U_a)\cong \Z$ and $H_n(Z,U_a)=0$ for all $n\in\Z-\{1,2\}$. And since $U_a$ is contractible we obtain that $\widetilde H_2(Z)\cong \widetilde H_1(Z)\cong\Z$ and $\widetilde H_n(Z)=0$ for all $n\in\Z-\{1,2\}$.

Indeed, $\mathcal{K}(Z)$ is homeomorphic to $S^2\vee S^1$.
\end{ex}

We turn now to a different generalization of our results which concerns group actions on locally finite T$_0$--spaces. From now on let $G$ be a group and let $Y$ be a locally finite and T$_0$ $G$--space. Clearly, for all $n\in\Z$ the group $G$ acts on $C_n^f(Y)$ since for every chain $C$ of $Y$ the set $gC$ is a chain in $Y$. Thus, the action of $G$ on $Y$ induces a $\Z[G]$--module structure in $C_n^f(Y)$ for all $n\in\Z$. Moreover, with this structure the differentials $d_n^f$ turn out to be morphisms of $\Z[G]$--modules.

We are particularly interested in coverings of locally finite T$_0$--spaces. Barmak and Minian gave in \cite{BM3} a simple description of the regular coverings of locally finite T$_0$--spaces in terms of colorings. We will recall briefly the basic points of this description from a categorical perspective. For the original and comprehensive exposition we refer the reader to \cite{BM3}.

A locally finite T$_0$--space (or equivalently a locally finite poset) $X$ can be regarded as a category in the standard way, that is, the objects of this category are the elements of $X$ and every relation $x\leq y$ in $X$ gives a unique arrow from $x$ to $y$. Also, if $G$ is a group, we can think of $G$ as a category with a unique object in the usual way. With these interpretations, if $G$ is a group and $X$ is a locally finite T$_0$--space, an \emph{admissible $G$--coloring of $X$} is simply a functor from $X$ to $G$. We say that an admissible $G$--coloring $c$ of $X$ is \emph{connected} if the induced functor $\overline{c}:X[\textnormal{Mor}(X)^{-1}]\to G$ is a full functor (here $X[\textnormal{Mor}(X)^{-1}]$ denotes the localization of the category $X$ with respect to the set of all the arrows of $X$). Note that an admissible $G$--coloring of a poset $X$ induces a labelling of the edges of the Hasse diagram of $X$ with elements of $G$.

If $c$ is an admissible connected $G$--coloring of $X$ the poset $E(c)$ is defined in \cite{BM3} as $E(c)=\{(x,g)\tq x\in X \textnormal{ and }g\in G\}$ with the relations $(x,g)\preceq (y,g\ldotp c(x\leq y))$ whenever $x\leq y$ in $X$. In this case the projection $p:E(c)\to X$ is a regular covering of $X$ with Deck transformation group isomorphic to $G$. Also, the universal cover of a locally finite T$_0$--space $X$ can be constructed as $E(c)$ where $c$ is a suitable admissible and connected $\pi_1(X)$--coloring of $X$ \cite{BM3}.

Now, we will combine these results of Barmak and Minian with the techniques developed in this article to obtain more applications together with generalizations and improvements on some results of \cite{BM4}.

Let $X$ be a locally finite T$_0$--space, let $c$ be an admissible connected $\pi_1(X)$--coloring of $X$ corresponding to the universal cover and let $E=E(c)$ be the universal cover of $X$. Let $G=\pi_1(X)$. Note that $E$ is a locally finite T$_0$ $G$--space with action given by $h\ldotp(x,g)=(x,hg)$ for $h\in G$ and $(x,g)\in E$. Let $\mathcal{F}=\{X_p:p\in \Z\}$ be a filtration of $X$ which is induced by antichains. Clearly, the filtration $\mathcal{F}$ induces a filtration $\mathcal{F}_E=\{X_p\times G\tq p\in\Z\}$ of $E$ which is also induced by antichains.

Note that if $A\subseteq X$ and we give to $A\times G$ the subspace topology with respect to $E$ then $U_{(x,g)}^{A\times G}$ is isomorphic to $U_x^A$ for all $x\in A$ and $g\in G$. Also, for all $x\in A$ and $g,h\in G$, if $g\neq h$ then $U_{(x,g)}^{A\times G}\cap U_{(x,h)}^{A\times G}=\varnothing$. Similar results hold for $\hat U_{(x,g)}^{A\times G}$, $F_{(x,g)}^{A\times G}$, $\hat F_{(x,g)}^{A\times G}$, $C_{(x,g)}^{A\times G}$ and $\hat C_{(x,g)}^{A\times G}$.

Thus, if the filtration $\mathcal{F}$ satisfies condition $(\ast)$ above then the filtration $\mathcal{F}_E$ also satisfies that condition. Hence, we can construct a spectral sequence which converges to the homology groups of $E$. Not surprisingly, the groups and morphisms of this spectral sequence will be determined by those of the spectral sequence corresponding to the filtration $\mathcal{F}$ of $X$.

To sum up, applying the locally finite version of theorem \ref{theo:principal} to the filtration $\mathcal{F}_E$ of $E$ one obtains the following result which gives a spectral sequence that converges to the homology groups of the universal cover of a finite T$_0$--space.

\begin{theo} \label{theo:principal_cover_version}
Let $X$ be a locally finite $T_0$--space and let $\{X_p:p\in \Z\}$ be a filtration of $X$ which is induced by antichains and which satisfies condition $(\ast)$. For each $p\in \N$, let $D_p=X_p-X_{p-1}$.

Let $c:X\to \pi_1(X)$ be an admissible and connected $\pi_1(X)$--coloring of $X$ corresponding to the universal cover and let $\widetilde X$ be the universal cover of $X$.

Then there is a spectral sequence $\{(E^r_{p,q})_{p,q\in \Z},(d^r_{p,q})_{p,q\in \Z}\}_{r\in \N}$ that converges to $H_*(\widetilde X)$ such that:
\begin{itemize}
\item $E^1_{p,q}=0$ for every $p\leq -1$.
\item $E^1_{0,q}=H_q(X_0\times \pi_1(X))$ \ \textnormal{(}$X_0\times \pi_1(X)$ is given the subspace topology with respect to $E(c)$\textnormal{)}.
\item $E^1_{p,q}=\bigoplus\limits_{x\in D_p}\bigoplus\limits_{g\in \pi_1(X)}\tilde{H}_{p+q-1}(\hat{C}^{X_p}_x)$ for $p\geq 1$.
\item The morphisms $d^1_{p,q}:E^1_{p,q}\longrightarrow E^1_{p-1,q}$ are defined in the following way:
\begin{itemize}
\item If $p\leq 0$ and $q\in \Z$, then $d^1_{p,q}$ is the trivial homomorphism.
\item If $p=1$ and $q\in\N_0$, then $d^1_{p,q}:\bigoplus\limits_{x\in D_1}\bigoplus\limits_{g\in \pi_1(X)}\tilde{H}_q(\hat{C}^{X_1}_x)\longrightarrow H_q(X_0\times\pi_1(X))$ is induced by the inclusion maps
$\hat{C}^{X_1}_{x}\times \pi_1(X) \longrightarrow X_0 \times \pi_1(X)$, with $x\in D_1$.
\item If $p\geq 1$ and $q\leq -p$, then $d^1_{p,q}$ is the trivial homomorphism.
\item If $p\geq 2$ and $q\geq 1-p$, then the differential 
$$d^1_{p,q}:\bigoplus\limits_{x\in D_p}\bigoplus\limits_{g\in \pi_1(X)}\tilde{H}_{p+q-1}(\hat{C}^{X_p}_x)\longrightarrow\bigoplus\limits_{y\in D_{p-1}}\bigoplus\limits_{g\in \pi_1(X)}\tilde{H}_{p+q-2}(\hat{C}^{X_{p-1}}_y)$$ 
is defined as follows. For each $x'\in D_p$ and $g'\in\pi_1(X)$ let 
$$\incl_{x',g'}:\tilde{H}_{p+q-1}(\hat{C}^{X_p}_{x'}) \longrightarrow \bigoplus\limits_{x\in D_p}\bigoplus\limits_{g\in \pi_1(X)}\tilde{H}_{p+q-1}(\hat{C}^{X_p}_x)$$
denote the canonical inclusion in the corresponding coordinate of the direct sum. In a similar way, for each $y'\in D_{p-1}$ and $h'\in\pi_1(X)$ let 
$$\incl'_{y',h'}:\tilde{H}_{p+q-2}(\hat{C}^{X_{p-1}}_{y'}) \longrightarrow \bigoplus\limits_{y\in D_{p-1}}\bigoplus\limits_{g\in \pi_1(X)}\tilde{H}_{p+q-2}(\hat{C}^{X_{p-1}}_y)$$
denote the canonical inclusion. Also, if $a\leq b$ we will denote $c(a\leq b)^{-1}$ by $c(b\leq a)$.

With this notations, the differential $d^1_{p,q}$ is the group homomorphism defined by
\begin{displaymath}
d^1_{p,q}\left(\incl_{x,g}\left(\left[\sum\limits_{i=1}^{l}a_i s_i\right]\right)\right) = \displaystyle \sum_{y\in D_{p-1}\cap C_x}\!\!\incl'_{y,g\ldotp c(x\leq y)} \left(\left[\sum\limits_{s_i\ni y}a_i\sgn_{s_i}(y)(s_i-\{y\})\right]\right)
\end{displaymath}
for all $x\in D_p$ and $g\in\pi_1(X)$, where $l\in\N$ and where for every  $i=\{1,\dots,l\}$, $a_i\in \Z$ and $s_i\in \tilde{C}_{p+q-1}(\hat{C}^{X_p}_x)$.
\end{itemize}
\end{itemize}
\end{theo}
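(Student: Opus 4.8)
The plan is to apply the locally finite version of Theorem \ref{theo:principal} --- whose validity was discussed at the beginning of this section --- to the universal cover $E=E(c)=\widetilde X$ equipped with the lifted filtration $\mathcal F_E=\{X_p\times\pi_1(X):p\in\Z\}$, and then to translate the resulting spectral sequence into the stated form by means of the poset isomorphisms between the pieces of $E$ and those of $X$ recorded before the statement.

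First I would record the formal set-up. Writing $G=\pi_1(X)$, the space $E$ is a locally finite $T_0$ $G$--space, $\mathcal F_E$ is a filtration of $E$ induced by antichains, and, since $\hat C^{X_p\times G}_{(x,g)}$ is isomorphic as a poset to $\hat C^{X_p}_x$ for all $p\in\Z$, $x\in X_p$, $g\in G$, the filtration $\mathcal F_E$ satisfies condition $(\ast)$ because $\mathcal F$ does. Hence Theorem \ref{theo:principal} in its locally finite form applies and yields a spectral sequence converging to $H_*(E)=H_*(\widetilde X)$. Next I would identify its first page. The $p$--th layer of $\mathcal F_E$ is $D_p\times G$, which is an antichain, so for $p\geq 1$ the isomorphism $\hat C^{X_p\times G}_{(x,g)}\cong\hat C^{X_p}_x$ gives
$$E^1_{p,q}=\bigoplus_{(x,g)\in D_p\times G}\tilde H_{p+q-1}(\hat C^{X_p\times G}_{(x,g)})\cong\bigoplus_{x\in D_p}\bigoplus_{g\in G}\tilde H_{p+q-1}(\hat C^{X_p}_x),$$
while $E^1_{0,q}=H_q(X_0\times G)$ and $E^1_{p,q}=0$ for $p\leq-1$. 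The stated forms of $d^1_{p,q}$ for $p\leq0$, for $p\geq1$ and $q\leq-p$, and for $p=1$ follow immediately from the corresponding clauses of Theorem \ref{theo:principal} applied to $(E,\mathcal F_E)$ together with the identification $\hat C^{X_1\times G}_{(x,g)}\cong\hat C^{X_1}_x$; in the case $p=1$, the map $([\sigma_{(x,g)}])_{(x,g)}\mapsto\sum_{(x,g)}[\sigma_{(x,g)}]$ is exactly the one induced on homology by the inclusions of the pieces $\hat C^{X_1\times G}_{(x,g)}$ into $X_0\times G$.

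The substantial step is the differential $d^1_{p,q}$ for $p\geq2$ and $q\geq1-p$. Here I would first establish the needed description of the chains of $E$: if $\hat s$ is a chain contained in a set of the form $C^{X_p\times G}_{(x,g)}$, then all of its vertices are comparable to $(x,g)$, and using that $c$ is a functor together with the convention $c(b\leq a)=c(a\leq b)^{-1}$ one checks --- examining the cases $z\leq x$ and $z\geq x$ separately --- that any vertex $(z,k)$ of $\hat s$ satisfies $k=g\cdot c(x\leq z)$. Consequently, under the poset isomorphism $\hat C^{X_p\times G}_{(x,g)}\cong\hat C^{X_p}_x$ obtained by restricting the covering projection $E\to X$ (which is also an isomorphism of the associated $f$--chain complexes), a representative $\sum_i a_i s_i$ of a class in $\tilde H_{p+q-1}(\hat C^{X_p}_x)$ corresponds to the representative $\sum_i a_i\hat s_i$ in $\hat C^{X_p\times G}_{(x,g)}$ whose chains $\hat s_i$ are the lifts of the $s_i$ through $(x,g)$; for $y\in D_{p-1}\cap C_x$ the unique vertex over $y$ that can occur in some $\hat s_i$ is $(y,g\cdot c(x\leq y))$, it occurs in $\hat s_i$ exactly when $y\in s_i$, its position in $\hat s_i$ equals the position of $y$ in $s_i$ so that $\sgn_{\hat s_i}((y,g\cdot c(x\leq y)))=\sgn_{s_i}(y)$, and $\hat s_i-\{(y,g\cdot c(x\leq y))\}$ corresponds to $s_i-\{y\}$ (the fact that this last chain indeed lies in $\hat C^{X_{p-1}}_y$, hence that $\hat s_i-\{(y,g\cdot c(x\leq y))\}$ lies in $\hat C^{X_{p-1}\times G}_{(y,g\cdot c(x\leq y))}$, is inherited from Theorem \ref{theo:principal}). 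Substituting these facts into the formula for $d^1_{p,q}$ furnished by Theorem \ref{theo:principal} for $(E,\mathcal F_E)$, and collecting the terms landing in each coordinate of the target, would give exactly the stated formula.

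The main obstacle will be the bookkeeping in this last step: keeping the inverse and sign conventions for $c$ consistent so as to be sure the $G$--coordinate of the vertex over $y$ in a chain lifted through $(x,g)$ is $g\cdot c(x\leq y)$ in every case, and checking that the poset isomorphisms between the pieces of $E$ and of $X$ are compatible with the differentials of the two spectral sequences --- in particular that they respect the signs $\sgn_{s_i}(y)$ and send $s_i-\{y\}$ to the correct summand $\tilde H_{p+q-2}(\hat C^{X_{p-1}}_y)$ of the target. Everything else is a routine specialization of Theorem \ref{theo:principal}; in particular, no new convergence argument is required once $(\ast)$ is known for $\mathcal F_E$.
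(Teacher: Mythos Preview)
Your proposal is correct and follows exactly the approach the paper takes: apply the locally finite version of Theorem~\ref{theo:principal} to the lifted filtration $\mathcal F_E=\{X_p\times\pi_1(X)\}$ of $E(c)=\widetilde X$, then translate the first page and differentials back to $X$ via the poset isomorphisms $\hat C^{X_p\times G}_{(x,g)}\cong\hat C^{X_p}_x$ recorded just before the statement. In fact the paper gives no further argument beyond this one sentence, so your write-up --- in particular the verification that the vertex of $\hat s_i$ lying over $y$ has $G$--coordinate $g\cdot c(x\leq y)$ in both the cases $y<x$ and $y>x$, and that signs and deleted chains transport correctly under the projection --- supplies precisely the bookkeeping the paper leaves to the reader.
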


\begin{ex}
Consider the poset $X$ of example \ref{ex_projective_plane} which is a finite model of the projective plane. In examples 3.7 and 4.5 of \cite{BM3}, Barmak and Minian computed its fundamental group by means of the following $\Z_2$--coloring
\[
\xymatrix@R=60pt{
j\bullet\ar@{-}[d]\ar@{-}[drr]\ar@{-}[drrrrr]&&k\bullet\ar@{-}[dl]\ar@{-}[d]\ar@{-}[drrrr]&&l\bullet\ar@{-}[dlll]\ar@{-}[d]\ar@{--}[dr]&&m\bullet\ar@{--}[d]\ar@{-}[dll]\ar@{-}[dllllll]\\
d\bullet\ar@{-}[d]\ar@{--}[drrr]&e\bullet\ar@{-}[dl]\ar@{-}[drr]&f\bullet\ar@{-}[dll]\ar@{-}[drrrr]&&g\bullet\ar@{-}[dllll]\ar@{--}[drr]&h\bullet\ar@{--}[dll]\ar@{-}[dr]&i\bullet\ar@{-}[d]\ar@{-}[dlll]\\
a\bullet&&&b\bullet&&&c\bullet\\
}
\] 
where the dashed edges are labelled with the generator of $\Z_2$ and where the remaining edges are labelled with the identity element of $\Z_2$.

We will apply the techniques we developed in this article to compute the second homotopy group of this space (and hence that of the real projective plane). Let $\{X_p\}_{p\in \Z}$ be the filtration of $X$ defined in example \ref{ex_projective_plane} and let $\widetilde X$ be the universal cover of $X$.

By theorem \ref{theo:principal_cover_version} and by the computations of \ref{ex_projective_plane} we obtain a spectral sequence which converges to the homology groups of $\widetilde X$. The first page of this spectral sequence is, in fact, a chain complex which, using the same notations as in example \ref{ex_projective_plane}, turns out to be the following
\[
\xymatrix@C=22pt{
\cdots&0\ar[l]& Z_a\oplus Z_a \ar[l] & Z_h\oplus Z_h\oplus Z_i\oplus Z_i\ar[l]_-{d_{1,0}^1=\overline\alpha} & Z_b\oplus Z_b\oplus Z_c\oplus Z_c\ar[l]_-{d_{2,0}^1=\overline\beta} & 0 \ar[l]&\cdots\ar[l]}
\]
By the previous theorem we obtain that 
\begin{displaymath}
\begin{array}{lcl}
\overline\alpha([l]-[j],0,0,0) & = & (-[a],[a]) \\
\overline\alpha(0,[l]-[j],0,0) & = & ([a],-[a]) \\
\overline\alpha(0,0,[m]-[k],0) & = & (-[a],[a]) \\
\overline\alpha(0,0,0,[m]-[k]) & = & ([a],-[a])
\end{array}
\end{displaymath}
and that
\begin{displaymath}
\begin{array}{lcl}
\overline\beta(g_0,0,0,0) & = & (0,[l]-[j],[m]-[k],0) \\
\overline\beta(0,g_0,0,0) & = & ([l]-[j],0,0,[m]-[k]) \\
\overline\beta(0,0,g_1,0) & = & ([l]-[j],0,[k]-[m],0) \\
\overline\beta(0,0,0,g_1) & = & (0,[l]-[j],0,[k]-[m]) 
\end{array}
\end{displaymath}
A simple computation shows that $E^2_{2,0}=\ker \overline{\beta} \cong \Z$. Thus, $\pi_2(X)\cong\pi_2(\widetilde X)\cong H_2(\widetilde X)\cong\Z$.
\end{ex}

% \begin{theo}
% Let $X$ be a finite T$_0$--space. Suppose there exists a contractible subspace $A\subseteq X$ such that $(X,A)$ is a relative quasicellular pair. Let $\rho$ be a quasicellular morphism for $(X,A)$.
% 
% Suppose, in addition, that there exists and a $\pi_1(X)$-coloring $c$ of $X$ such that $c(x,y)$ is the identity element of $\pi_1(X)$ for all edges $(x,y)$ of the Hasse diagram of $X$ such that $x,y\in X-A$ and $(\rho(x),\rho(y))\in\{(1,2),(2,3)\}$. Then, $\pi_2(X)=H_2(X)\otimes \Z[\pi_1(X)]$.
% \end{theo}

With the techniques developed above we will give a generalization of Hurewicz's theorem for which we need some previous results.

\begin{lemma} \label{lemma_subdiv_X_A}
Let $X$ be a finite T$_0$--space and let $A$ be a subspace of $X$. Then
\begin{enumerate}
\item $A'$ is an open subset of $X'$ and $(X-A)'\subseteq X'-A'$.
\item The inclusion $i:(X-A)'\to X'-A'$ is a weak homotopy equivalence.
\end{enumerate}
\end{lemma}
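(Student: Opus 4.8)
The plan is to use the concrete description of the barycentric subdivision: $X'$ is the poset of nonempty chains of $X$ ordered by inclusion, and in a finite $T_0$--space the open sets are exactly the down-sets of the associated order. For part (1), $A'$ is the set of nonempty chains contained in $A$, and this set is downward closed in $X'$ — if $\varnothing\neq D\subseteq C\subseteq A$ then $D\in A'$ — so $A'$ is open. The inclusion $(X-A)'\subseteq X'-A'$ is immediate, since a nonempty chain contained in $X-A$ is not a chain of $A$; note it is usually proper, as a chain meeting both $A$ and $X-A$ lies in $X'-A'$ but not in $(X-A)'$.

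For part (2), I would produce an explicit poset retraction and invoke the comparison homotopy lemma for finite spaces. Define $r\colon X'-A'\to (X-A)'$ by $r(C)=C-A$. This is well defined because for a nonempty chain $C$ of $X$ not contained in $A$, the set $C-A$ is a nonempty chain contained in $X-A$; it is order preserving since $C\subseteq D$ forces $C-A\subseteq D-A$, hence continuous; and $r\circ i=\mathrm{id}_{(X-A)'}$, because a chain already contained in $X-A$ is unaffected by deleting the points of $A$. On the other hand, for every $C\in X'-A'$ we have $(i\circ r)(C)=C-A\subseteq C$, that is $i\circ r\le \mathrm{id}_{X'-A'}$ as self-maps of the poset $X'-A'$. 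By the standard fact that comparable continuous maps between finite $T_0$--spaces are homotopic (see \cite{Sto, BarLN}), $i\circ r\simeq \mathrm{id}_{X'-A'}$, so $i$ and $r$ are mutually inverse homotopy equivalences; in particular $i$ is a weak homotopy equivalence. Since $i\circ r$ moreover agrees with the identity on $(X-A)'$, one even gets that $(X-A)'$ is a strong deformation retract of $X'-A'$.

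I do not expect a genuine obstacle: the argument is essentially formal once the subdivision is written as the poset of chains and one recalls the comparison-of-maps homotopy lemma. The only points needing a moment of care are the identification of the subspace $X'-A'$ with the corresponding subposet of $X'$, so that this lemma applies there, and the verification that $r$ is well defined with image in $(X-A)'$ — both routine from the definitions.
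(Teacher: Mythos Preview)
Your proof is correct and in fact proves more than the paper claims: you obtain a genuine strong deformation retract, hence a homotopy equivalence, not merely a weak one. The key map $C\mapsto C-A=C\cap(X-A)$ is the same one the paper uses, but the route is different. The paper checks, for each $\sigma\in X'-A'$, that $i^{-1}(U_\sigma)=U_{\sigma\cap(X-A)}$ is contractible and then invokes McCord's theorem \cite[Theorem~6]{McC} to conclude that $i$ is a weak homotopy equivalence. You instead promote this same assignment to a continuous retraction $r$ and use Stong's comparison lemma ($i\circ r\le\id$ implies $i\circ r\simeq\id$). Your argument is more elementary---it avoids McCord's local-to-global machinery---and yields the stronger conclusion; the paper's argument is slightly quicker once McCord is available. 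It is worth noting that the authors themselves use exactly your retraction argument later in the article (the map $r(\sigma)=\sigma\cap V_L$ in the proof of the final theorem), so your approach is very much in the spirit of the paper.
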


\begin{proof}
(1) Follows easily from the definition of barycentric subdivision of a poset.

(2) Let $\sigma\in X'-A'$ and let $\eta=\sigma\cap(X-A)$. Then $\eta\in (X-A)'$ and
$$i^{-1}(U_\sigma)=\{\tau\tq\textnormal{$\tau$ is a chain of $X-A$ and $\tau\subseteq \sigma$}\}=U_\eta$$
which is contractible. Hence, by McCord's theorem (\cite[theorem 6]{McC}), $i$ is a weak homotopy equivalence.
\end{proof}

The following result is a generalization of Hurewicz's theorem for locally finite T$_0$--spaces which will be applied to obtain a similar generalization for regular CW-complexes.

\begin{theo} \label{theo_pi_2}
Let $X$ be a locally finite and connected T$_0$--space. Suppose that there exist a nonempty subset $A\subsetneq X$ and $n\in\N$ with $n\geq 2$ such that all the connected components of $A$ are $n$--connected and such that the inclusion of each connected component of $X-A$ in $X$ induces the trivial morphism between the fundamental groups.

If $l\in\N$ is such that $2\leq l\leq n$ and $H_j(X)=0$ for all $2\leq j\leq l-1$ then $\pi_j(X)=0$ for all $2\leq j\leq l-1$ and 
$\pi_l(X)=H_l(X)\otimes\Z[\pi_1(X)]$. In particular, $\pi_2(X)=H_2(X)\otimes\Z[\pi_1(X)]$.
\end{theo}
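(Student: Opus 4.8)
The strategy is to pass to the universal cover $\widetilde X=E(c)$ of $X$, compute its homology in the range $j\leq l$ in terms of that of $X$, and then invoke the classical Hurewicz theorem, which applies since $\widetilde X$ is simply connected and (being weakly equivalent to $|\mathcal{K}(\widetilde X)|$) has the weak homotopy type of a CW-complex.

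First I would reduce to the case in which $A$ is open in $X$. By Lemma \ref{lemma_subdiv_X_A}, replacing $(X,A)$ by $(X',A')$ changes neither the weak homotopy type of $X$ nor those of $A$ and of $X-A$ componentwise, so the $n$-connectivity of the components of $A$ and the triviality of the maps $\pi_1(C)\to\pi_1(X)$ for $C$ a component of $X-A$ are preserved. Henceforth $A$ is open, so each chain of $(X,A)$ splits uniquely as a chain $\sigma$ contained in $A$ (possibly empty) followed by a nonempty chain $\tau$ contained in $X-A$, with $\max\sigma<\min\tau$ when $\sigma\neq\varnothing$, and $\tau$ lies in a single connected component of $X-A$.

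The heart of the argument is a chain isomorphism $C^f_\ast(\widetilde X,\widetilde A)\cong C^f_\ast(X,A)\otimes_\Z\Z[G]$, where $G=\pi_1(X)$ and $\widetilde A=p^{-1}(A)$. For each component $C$ of $X-A$, the hypothesis that $C\hookrightarrow X$ induces the trivial map on $\pi_1$ implies that the restriction to $C$ of the functor $c\colon X\to G$ has no holonomy, so there is a function $\delta_C\colon C\to G$ with $c(v\leq v')=\delta_C(v)^{-1}\delta_C(v')$ for every edge $v\leq v'$ in $C$. Given a chain $s=\sigma\ast\tau$ of $(X,A)$, its lifts to $\widetilde X$ are indexed by the $G$-coordinate $g$ of the lift of $\min\tau$; I would reindex them by $\bar g:=g\,\delta_C(\min\tau)^{-1}$ (with $C$ the component containing $\tau$) and set $\Phi(\tilde s)=s\otimes\bar g$. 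One checks directly that $\Phi$ carries the differential of $C^f_\ast(\widetilde X,\widetilde A)$ to $d^f\otimes\mathrm{id}$: deleting a vertex of $\sigma$, or a vertex of $\tau$ other than $\min\tau$, leaves $\bar g$ untouched; deleting $\min\tau$ either collapses $\tau$ into $A$ (and the corresponding term vanishes on both sides) or replaces $\min\tau$ by the next vertex $w'$ of $\tau$, and then the relation defining $\delta_C$ gives $g\,c(\min\tau\leq w')\,\delta_C(w')^{-1}=g\,\delta_C(\min\tau)^{-1}=\bar g$. Hence $\Phi$ is an isomorphism of chain complexes and $H_j(\widetilde X,\widetilde A)\cong H_j(X,A)\otimes\Z[G]$ for all $j$. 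I expect this to be the main obstacle: it is where the hypothesis on $X-A$ enters essentially, and it requires care with the combinatorics of lifts and with the twisting of $d^f$ by the coloring.

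To finish, observe that since the components of $A$ are $n$-connected (in particular simply connected, as $n\geq 2$), the covering restricted to $A$ is trivial over each component, so $\widetilde A\cong A\times G$ and thus $H_j(\widetilde A)\cong H_j(A)\otimes\Z[G]$; using $l\leq n$ we get $\widetilde H_j(A)=0$ and $H_j(\widetilde A)=0$ for $1\leq j\leq l$. The long exact sequences of the pairs $(X,A)$ and $(\widetilde X,\widetilde A)$ then yield $H_j(X)\cong H_j(X,A)$ and $H_j(\widetilde X)\cong H_j(\widetilde X,\widetilde A)$ for $2\leq j\leq l$, whence $H_j(\widetilde X)\cong H_j(X)\otimes\Z[G]$ in that range. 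In particular $\widetilde H_j(\widetilde X)=0$ for $1\leq j\leq l-1$ (in degree $1$ because $\widetilde X$ is simply connected, and in degrees $2\leq j\leq l-1$ because $H_j(X)=0$ by hypothesis), so Hurewicz's theorem gives $\pi_j(\widetilde X)=0$ for $j\leq l-1$ and $\pi_l(\widetilde X)\cong H_l(\widetilde X)$. Since $p\colon\widetilde X\to X$ induces isomorphisms on $\pi_j$ for $j\geq 2$, we conclude that $\pi_j(X)=0$ for $2\leq j\leq l-1$ and $\pi_l(X)\cong H_l(\widetilde X)\cong H_l(X)\otimes\Z[\pi_1(X)]$, as desired.
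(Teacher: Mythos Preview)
Your proof is correct, and the overall architecture---reduce to $A$ open via Lemma~\ref{lemma_subdiv_X_A}, exploit that the coloring trivializes on $X-A$, identify $H_j(\widetilde X)$ with $H_j(X)\otimes\Z[G]$ in the relevant range, then apply classical Hurewicz on $\widetilde X$---matches the paper's. The difference lies in how the middle step is carried out.

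The paper first subdivides so that $X$ becomes a cellular poset and $(X,A)$ a relative quasicellular pair, then invokes the spectral sequence of Theorem~\ref{theo:principal_cover_version} for the filtration $X_p=A\cup\{x\in X-A:\rho(x)\leq p\}$; the triviality of the coloring on $X-A$ (quoted from \cite{BM3}) makes the $E^1$ differentials split as $\bigoplus_{g\in G}d_j$, so the $E^2$ row computes $\bigoplus_{g\in G}H_\ast(X,A)$. You instead bypass the spectral sequence and the quasicellular structure entirely, producing an explicit chain isomorphism $C^f_\ast(\widetilde X,\widetilde A)\cong C^f_\ast(X,A)\otimes\Z[G]$ by gauge-fixing the coloring on each component of $X-A$ via the potentials $\delta_C$. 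Your verification that $\bar g$ is preserved under all face maps is exactly right, and the existence of $\delta_C$ is precisely the statement that a functor $C\to G$ inducing the trivial map on $\pi_1$ is naturally isomorphic to the constant functor; this is the same content as the paper's appeal to \cite[remark~4.2]{BM3}, just phrased intrinsically.

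What your route buys is a shorter, more self-contained argument that does not need Theorem~\ref{theo:principal_cover_version} or the cellular structure of $X'$. What the paper's route buys is a demonstration that Theorem~\ref{theo:principal_cover_version} does real work; indeed, the remark following the CW-complex version of the theorem in the paper explicitly acknowledges that a direct argument along the lines you give is available. Your proof and that remark are essentially the same observation.
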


\begin{proof}
By the previous lemma, taking the barycentric subdivisions of the posets $X$ and $A$ we may suppose that $A$ is an open subspace of $X$ and that $X$ is a cellular poset and hence that $(X,A)$ is a relative quasicellular pair. 

Let $\rho$ be a quasicellular morphism for $(X,A)$. Consider the filtration $\{X_p\}_{p\in \Z}$ of $X$ given by 
\begin{displaymath}
X_p=\left\{
\begin{array}{ll}
\varnothing & \textnormal{if $p\leq -1$} \\
A \cup \{x\in X-A:\rho(x)\leq p\} & \textnormal{if $p\geq 0$}
\end{array}
\right.
\end{displaymath}
Now, by remark 4.2 of \cite{BM3} there exists a $\pi_1(X)$--coloring $c$ of $X$ corresponding to the universal cover such that $c(a,b)$ is the identity element of $\pi_1(X)$ for all $a,b\in X-A$ with $a \leq b$. Note that $A$ and $\{x\in X-A:\rho(x)=0\}$ are disjoint open subsets and that $\{x\in X-A:\rho(x)=0\}$ is a discrete subspace.

Applying \ref{theo:principal_cover_version} yields a spectral sequence that converges to $H_\ast(\tilde X)$ whose first page is
\[
\xymatrix@R=5pt{&&&\\
&\vdots&\vdots&\vdots\\
&\displaystyle \bigoplus_{g\in\pi_1(X)}\!\!\! H_2(A) & 0 & 0 \\
&\displaystyle \bigoplus_{g\in\pi_1(X)}\!\!\! H_1(A) & 0 & 0 \\
&\displaystyle \bigoplus_{g\in\pi_1(X)}\!\!\! H_0(X_0) \ar[l]+<0pt,-25pt>;[rrr]+<20pt,-25pt>_>{p} \ar[ddd]+<-35pt,-10pt>;[uuuu]+<-35pt,0pt>^>{q} & 
\displaystyle\bigoplus\limits_{x\in D_1}\bigoplus\limits_{g\in \pi_1(X)}\!\!\!\tilde{H}_{0}(\hat{C}^{X_1}_x) \ar[l]_-{d^1_{1,0}} & 
\displaystyle\bigoplus\limits_{x\in D_2}\bigoplus\limits_{g\in \pi_1(X)}\!\!\!\tilde{H}_{1}(\hat{C}^{X_2}_x) \ar[l]_-{d^1_{2,0}} & 
\ldots \ar[l]_-{d^1_{3,0}} \\
\\
&  0 & 0 & 0 \\
&\vdots&\vdots&\vdots \\
}
\] 
Since $c(a,b)$ is the identity element of $\pi_1(X)$ for all $a,b\in X-A$ with $a \leq b$, by \ref{theo:principal_cover_version}, we obtain that  $d^1_{j,0}=\bigoplus\limits_{g\in \pi_1(X)} d_j$ for all $j\geq 2$, where $d_j$ is the differential defined in corollary \ref{coro_quasicel_relativo} for the relative quasicellular pair $(X,A)$. Hence $E^2_{p,0}\cong H_p(X,A)$ for all $p\geq 2$. And since $A$ is $n$--connected, from the spectral sequence above we get that $H_k(\tilde X)\cong \bigoplus\limits_{g\in \pi_1(X)} H_k(X,A)\cong \bigoplus\limits_{g\in \pi_1(X)} H_k(X)\cong H_k(X)\otimes \Z[\pi_1(X)]$ for all $2\leq k \leq n$. The result follows.
\end{proof}

The following remark analyses the hypothesis on the existence of the subspace $A$ of the previous theorem giving a necessary condition for it to hold.

\begin{rem}
If $X$ is a locally finite and connected T$_0$--space which satisfies the hypotheses of theorem \ref{theo_pi_2} then the fundamental group of $X$ is free.

Indeed, let $X$ be a locally finite T$_0$--space such that there exist $A\subsetneq X$ and $n\in\N$ with $n\geq 2$ such that all the connected components of $A$ are $n$--connected and such that the inclusion of each connected component of $X-A$ in $X$ induces the trivial morphism between the fundamental groups. For simplicity, assume that $X$ is a finite T$_0$--space and that $A$ is connected. With some technical considerations, the proof of this case can be extended to the general case.

By lemma \ref{lemma_subdiv_X_A}, taking the barycentric subdivisions of the posets $X$ and $A$ we may suppose that $A$ is an open subspace of $X$. Let $\{B_1,\ldots,B_r\}$ be the set of connected components of $X-A$. Note that $B_j$ is closed in $X$ for all $j\in\{1,\ldots,r\}$. Since $X$ is connected, the subsets $B_j$ can not be open in $X$ and thus, for each $j\in\{1,\ldots,r\}$ there exist $a_j\in X-B_j$ and $b_j\in B_j$ such that $a_j<b_j$. But if $a_j$ belongs to some $B_k$ with $k\neq j$ then $b_j\in B_k$ (since $B_k$ is closed in $X$) and thus $B_k\cap B_j\neq \varnothing$ which can not be possible. Hence, $a_j\in A$ for all $j\in\{1,\ldots,r\}$. Moreover, we may suppose that $(a_j,b_j)$ is an edge of the Hasse diagram of $X$. Now let $D$ be the subdiagram of the Hasse diagram of $X$ defined as the union of the Hasse diagram of $A$ with the Hasse diagrams of all the subsets $B_j$ for $j\in\{1,\ldots,r\}$ together with the edges $(a_j,b_j)$ for all $j$. Let $Z$ be the finite T$_0$--space whose Hasse diagram is $D$. Applying the description of the fundamental group of finite spaces given in \cite[section 2.4]{BarLN} it is easy to prove that the inclusion of $D$ in the Hasse diagram of $X$ induces the trivial map between the fundamental groups of $Z$ and $X$ (for any choice of the basepoint of $Z$).

Now, applying the proof of theorem 4.4 of \cite{BM3} we get a description of $\pi_1(X)$ by generators and relations, where the set of generators is the set of edges of the Hasse diagram of $X$ which are not in $D$. But since $A$ is an open subset of $X$, every monotonic edge-path in $X$ has at most one of such edges. Hence, the relations of the aforementioned description of $\pi_1(X)$ either identify two generators or identify a generator with the identity element of $\pi_1(X)$ (cf. \cite[corollary 4.9]{BM3}). Thus, $\pi_1(X)$ is a free group.
\end{rem}

Note that the converse of the implication stated in the previous remark does not hold. Indeed, take $X$ as a finite model of $S^1\times S^2$. Then $\pi_1(X)\cong \Z$, $\pi_2(X)\cong \Z$ and $H_2(X)\cong \Z$. Hence $\pi_2(X)\not\cong H_2(X)\otimes\Z[\pi_1(X)]$ and thus $X$ does not satisfy the hypotheses of theorem \ref{theo_pi_2}.

The following is a simple example of application of \ref{theo_pi_2}.

\begin{ex}
Consider the poset $Z$ defined in example \ref{ex_S2_v_S1}. We wish to compute $\pi_2(Z)$.

Clearly, the inclusion of each connected component of $Z-U_a$ in $Z$ induces the trivial morphism between the fundamental groups, and since $U_a$ is contractible, the previous theorem applies and yields
$$\pi_2(Z)=H_2(Z)\otimes\Z[\pi_1(Z)].$$

Now, in example \ref{ex_S2_v_S1} we obtained that $H_2(Z)\cong\Z$.

On the other hand, the fundamental group of $Z$ is easy to compute. Considering the open sets $U_a\cup U_c$ and $U_b$ and applying the van Kampen theorem yields 
$$\pi_1(Z) \cong \pi_1(U_a\cup U_c) \cong \pi_1(U_c\cup\{a\}) \cong \Z\ .$$
Here, the first isomorphism holds since the map $\pi_1(U_b\cap (U_a\cup U_c))\to \pi_1(U_a\cup U_c)$ induced by the inclusion is trivial as it can be factorized through $\pi_1(U_a)$. And the second isomorphism holds since $U_c\cup\{a\}$ can be obtained from $U_a\cup U_c$ by removing beat points.

Thus, $\pi_2(Z)=\Z[\Z]$ (indeed, as we mentioned in \ref{ex_S2_v_S1}, $\mathcal{K}(Z)$ is homeomorphic to $S^2\vee S^1$).
\end{ex}

Finally, we will apply \ref{theo_pi_2} to obtain a generalization of Hurewicz theorem for regular CW-complexes.

\begin{theo}
Let $X$ be a connected regular CW-complex. Suppose that there exist a nonempty subcomplex $A\subsetneq X$ and $n\in\N$ with $n\geq 2$ such that all the connected components of $A$ are $n$--connected and such that the inclusion of each connected component of $X-A$ in $X$ induces the trivial morphism between the fundamental groups.

If $l\in\N$ is such that $2\leq l\leq n$ and $H_j(X)=0$ for all $2\leq j\leq l-1$ then $\pi_j(X)=0$ for all $2\leq j\leq l-1$ and 
$\pi_l(X)=H_l(X)\otimes\Z[\pi_1(X)]$. In particular, $\pi_2(X)=H_2(X)\otimes\Z[\pi_1(X)]$.
\end{theo}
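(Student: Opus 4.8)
The plan is to reduce to Theorem~\ref{theo_pi_2} by passing from the regular CW--complex $X$ to its face poset $P=\mathcal{X}(X)$ and from the subcomplex $A$ to $Q=\mathcal{X}(A)$. First I would record the structural facts: $P$ is a locally finite $T_0$--space, since closure--finiteness of $X$ makes each set $U_\sigma$ of faces of a cell $\sigma$ finite; and $Q$ is an open subspace of $P$, since a subcomplex corresponds to a downward closed subset of the face poset. Because $X$ is regular, $\mathcal{K}(\mathcal{X}(X))$ is the barycentric subdivision of $X$, so $|\mathcal{K}(P)|$ is homeomorphic to $X$ and, by McCord's theorem, weakly equivalent to $P$; the same holds for $Q$ and $A$, compatibly with the inclusions $Q\subseteq P$ and $A\subseteq X$. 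Thus $P$ is a locally finite, connected $T_0$--model of $X$, the pair $(P,Q)$ models $(X,A)$, and $Q$ is nonempty and properly contained in $P$.

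Next I would verify the hypotheses of Theorem~\ref{theo_pi_2} for $(P,Q)$. The connected components of $Q$ are weakly equivalent to those of $A$, hence $n$--connected. For the remaining hypothesis I must identify the poset $P-Q=\mathcal{X}(X)-\mathcal{X}(A)$ with the topological complement $X-A$, up to weak equivalence and compatibly with the inclusion into $P$. Here Lemma~\ref{lemma_subdiv_X_A}, applied to the poset pair $(P,Q)$, does the main work: it shows that $(P-Q)'\hookrightarrow P'-Q'$ is a weak equivalence and that $Q'$ is open in $P'$. Combining this with the barycentric--subdivision weak equivalence $(P-Q)'\to P-Q$, with the restriction of the McCord map $|\mathcal{K}(P')|\to P'$ to the preimage of $P'-Q'$ --- which is again a weak equivalence, since the preimage of each minimal open set is the complement of a proper subcomplex of a simplex, hence contractible --- and with the identification of $|\mathcal{K}(P')|$ with $X$ under which $|\mathcal{K}(Q')|$ corresponds to $A$, one obtains a chain of weak equivalences connecting $X-A$ with $P-Q$. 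All the maps involved being natural, the chain is compatible with the inclusions into $X$ and $P$, so the triviality of $\pi_1(B)\to\pi_1(X)$ for every connected component $B$ of $X-A$ transfers to $P-Q$.

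Finally I would apply Theorem~\ref{theo_pi_2} to $(P,Q)$ with the same $n$ and the same $l$, using that $H_j(X)=0$ if and only if $H_j(P)=0$: this yields $\pi_j(P)=0$ for $2\le j\le l-1$ and $\pi_l(P)=H_l(P)\otimes\Z[\pi_1(P)]$. Transporting these identities back along the weak equivalence between $P$ and $X$ replaces $P$ by $X$ throughout and gives precisely the asserted conclusion, in particular $\pi_2(X)=H_2(X)\otimes\Z[\pi_1(X)]$.

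I expect the only real difficulty to be the identification in the second paragraph --- that the complement of the face poset of a subcomplex faithfully models the topological complement, and that the comparison maps are natural enough for the $\pi_1$--triviality condition to pass from $(X,A)$ to $(P,Q)$. Everything else --- the local finiteness and connectedness of $\mathcal{X}(X)$, the openness of $\mathcal{X}(A)$, the $n$--connectedness of the components of $Q$, and the final transfer of homotopy and homology groups --- is routine once one invokes invariance under weak homotopy equivalence together with the results already established in the paper.
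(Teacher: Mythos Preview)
Your high-level strategy is the same as the paper's: pass to face posets and invoke Theorem~\ref{theo_pi_2}. The difference lies entirely in how you carry out the identification of $P-Q$ with $X-A$ (compatibly with the inclusions), and here your route diverges from the paper's and also contains a small inaccuracy.

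The paper first barycentrically subdivides the CW-pair so that $X=|K|$ with $A=|B|$ a \emph{full} subcomplex, and then introduces the complementary full subcomplex $L$ on the vertex set $V_K-V_B$. Two explicit deformation retractions do all the work: the classical one $|K|-|B|\to |L|$, and a poset retraction $r:\mathcal{X}(K)-\mathcal{X}(B)\to\mathcal{X}(L)$, $r(\sigma)=\sigma\cap V_L$. Both are strong deformation retracts, so the $\pi_1$-triviality passes from $X-A$ to $\mathcal{X}(K)-\mathcal{X}(B)$ via $|L|\simeq\mathcal{X}(L)$ by one commutative square. This is short and concrete, and it is exactly here that fullness of $B$ is used.

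Your zigzag through $(P-Q)'\hookrightarrow P'-Q'$ and the restricted McCord map is a legitimate alternative, but two points need tightening. First, with the (continuous) McCord map $\mu(p)=\min(\mathrm{supp}\,p)$ one has $\mu^{-1}(P'-Q')=|\mathcal{K}(P'-Q')|$, which under $|\mathcal{K}(P')|\cong X$ is the complement of the \emph{open star} of $|\mathcal{K}(Q')|$, not $X-A$ itself; you still need the (easy) deformation retraction of $X-A$ onto this subcomplex, available because $\mathcal{K}(Q')$ is full in $\mathcal{K}(P')$. Second, your justification that the restricted map is a weak equivalence (``complement of a proper subcomplex of a simplex'') does not match the min-convention preimages; the clean reason is simply that $\mu|_{|\mathcal{K}(P'-Q')|}$ \emph{is} the McCord map of the poset $P'-Q'$. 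Finally, Lemma~\ref{lemma_subdiv_X_A} is stated for finite spaces, so either note that its proof goes through verbatim for locally finite $T_0$-spaces or restrict to finite $X$. With these adjustments your argument goes through; the paper's version is just more direct because the single retraction $r(\sigma)=\sigma\cap V_L$ replaces your entire zigzag.
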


\begin{proof}
Taking barycentric subdivision we may suppose that $X$ is the geometric realization of a simplicial complex $K$ and that $A$ is the geometric realization of a full subcomplex $B$ of $K$. Then $\mathcal{X}(K)$ is a connected and locally finite T$_0$--space and $\mathcal{X}(B)$ is $n$--connected.

We will prove now that the inclusion $j:\mathcal{X}(K)-\mathcal{X}(B) \to \mathcal{X}(K)$ induces the trivial morphism between the fundamental groups for any choice of the basepoint of $\mathcal{X}(K)-\mathcal{X}(B)$. Let $L$ be the subcomplex of $K$ defined by $V_L=V_K-V_B$ and $S_L=\{\sigma\in S_K \tq \sigma\subseteq V_L\}$.
Note that $X-A=|K|-|B|=\bigcup\limits_{v\in V_L} \textnormal{st}(v)$ (here $\textnormal{st}(v)$ denotes the open star of the vertex $v$) and since $L$ is a full subcomplex of $K$ we obtain that $|L|$ is a strong deformation retract of $|K|-|B|$ \cite[p. 124]{Spa}.

Let $i:L\to K$ be the inclusion. Since the inclusion of each connected component of $X-A$ in $X$ induces the trivial morphism between the fundamental groups we obtain that $|i|:|L| \to |K|$ induces the trivial morphism between the fundamental groups for any choice of the basepoint of $|L|$. Hence, $(\mathcal{X}(i))_\ast:\pi_1(\mathcal{X}(L))\to \pi_1(\mathcal{X}(K))$ is the trivial morphism for any choice of the basepoint of $\mathcal{X}(L)$. Let $i': \mathcal{X}(L) \to \mathcal{X}(K)-\mathcal{X}(B)$ be the inclusion map. Consider the following commutative diagram
\begin{displaymath}
\xymatrix{\pi_1(\mathcal{X}(K)-\mathcal{X}(B)) \ar[r]^-{j_\ast} & \pi_1(\mathcal{X}(K)) \\ \pi_1(\mathcal{X}(L)) \ar[u]^{i'_\ast} \ar[ru]_-{(\mathcal{X}(i))_\ast=0} }
\end{displaymath}
Let $r:\mathcal{X}(K)-\mathcal{X}(B) \to \mathcal{X}(L)$ be defined by $r(\sigma)=\sigma\cap V_L$ for all $\sigma\in \mathcal{X}(K)-\mathcal{X}(B)$. Note that $r$ is well defined since $B$ is a full subcomplex of $K$. And since $r$ is order-preserving, it is continuous. Clearly, $ri'=\id_{\mathcal{X}(L)}$. Also, $i'r(\sigma)\leq \sigma$ for all $\sigma\in \mathcal{X}(K)-\mathcal{X}(B)$ and the equality holds for all $\sigma\in \mathcal{X}(L)$. Hence $i'r\simeq \id_{\mathcal{X}(K)-\mathcal{X}(B)}\ \textnormal{rel }\mathcal{X}(L)$. Thus, the map $r$ is a strong deformation retract.

Hence, $i'_\ast$ is an isomorphism and thus $j_\ast:\pi_1(\mathcal{X}(K)-\mathcal{X}(B)) \to \pi_1(\mathcal{X}(K))$ is the trivial morphism if we choose a basepoint which belongs to $\mathcal{X}(L)$. But since $r$ is a strong deformation retract, any point of $\mathcal{X}(K)-\mathcal{X}(B)$ belongs to the same connected component of some point of $\mathcal{X}(L)$. Hence, the inclusion $j:\mathcal{X}(K)-\mathcal{X}(B)\to \mathcal{X}(K)$ induces the trivial morphism between the fundamental groups for any choice of the basepoint of $\mathcal{X}(K)-\mathcal{X}(B)$.

Therefore, theorem \ref{theo_pi_2} applies and the result follows.
\end{proof}

\begin{rem}
With these ideas into consideration, one can provide a proof of the previous theorem without the use of finite spaces. Indeed, let $X$ be a connected CW-complex and let $C\subseteq X$ be a connected subcomplex such that the morphism $i_\ast:\pi_1(C)\to\pi_1(X)$ induced by the inclusion is trivial. Let $p:\widetilde{X}\to X$ be the universal cover of $X$ and let $\widetilde{C}$ be a connected component of $p^{-1}(C)$. Let $p'=p|_{\widetilde{C}}:\widetilde{C}\to C$. Then $p'$ is a covering and $(p')_\ast(\pi_1(\widetilde{C}))=\ker(i_\ast)=\pi_1(C)$. Thus, $(p')_\ast:\pi_1(\widetilde{C})\to\pi_1(C)$ is an isomorphism and therefore $p':\widetilde{C}\to C$ is a homeomorphism. Applying this result to each connected component of $X-A$ and computing cellular homology one can verify that $H_k(\widetilde{X},p^{-1}(A))=\bigoplus\limits_{g\in\pi_1(X)}H_k(X,A)$ for all $k$. Since $A$ is $n$-connected, each connected component of $p^{-1}(A)$ is $n$-connected and the result follows.

Note that this proof is essentially the same as the proof of \ref{theo_pi_2}. Clearly, the finite space approach shows explicitly the key ideas of the result which were also needed for its formulation.
\end{rem}


\begin{thebibliography}{99}
\bibitem{Alex} Alexandroff, P. S. \textit{Diskrete R\"aume}. Mathematiceskii Sbornik (N.S.) 2 (1937) 501--518.
\bibitem{BarLN} Barmak, J. \textit{Algebraic Topology of Finite Topological Spaces and Applications}. Lecture Notes in Mathematics Vol. 2032. Springer. 2011. xviii+170 pp.
\bibitem{BM1} Barmak, J., Minian, G. \textit{Simple homotopy types and finite spaces}. Advances in Ma\-the\-matics 218 (2008) 87--104.
\bibitem{BM2} Barmak, J., Minian, G. \textit{One-point reductions of finite spaces, h-regular CW-complexes and collapsibility}. Algebraic and Geometric Topology 8 (2008) 1763--1780.
\bibitem{BM3} Barmak, J., Minian, G. \textit{G-colorings of posets, coverings and presentations of the fundamental group}. Submitted (2014). arXiv:1212.6442v2
\bibitem{BM4} Barmak, J., Minian, G. \textit{The second homotopy group in terms of colorings of locally finite models and new results on asphericity}. Submitted (2014). arXiv:1412.4835
\bibitem{BjoWal} Bj\"orner, A., Walker, J. \textit{A homotopy complementation formula for partially ordered sets}. European Journal of Combinatorics 4 (1983) 11--19.
\bibitem{For} Forman, R. \textit{Morse theory for cell complexes}. Advances in mathematics 134 (1998) 90--145.
\bibitem{Koz} Kozlov, D. \textit{Combinatorial Algebraic Topology}. Springer. 2008. xix+389 pp.
\bibitem{Lov} L\'ovasz, L. \textit{Kneser's conjecture, chromatic number and homotopy}. Journal of Combinatorial Theory, series A, 25 (1978) 319--324.
\bibitem{McC} McCord, M. C. \textit{Singular homology groups and homotopy groups of finite topological spaces}. Duke Mathematical Journal 33 (1966) 465--474.
\bibitem{Min} Minian, G. \textit{Some remarks on Morse theory for posets, homological Morse theory and finite manifolds}. Topology and its applications 159 (2012) 2860--2869 
\bibitem{Spa} Spanier, E. \textit{Algebraic Topology}. Springer. 1966. xiv+528 pp.
\bibitem{Sto} Stong, R. \textit{Finite topological spaces}. Transactions of the American Mathematical Society 123 (1966) 325--340.
\bibitem{Wal} Walker, J. \textit{Homotopy type and Euler characteristic of partially ordered sets}. European Journal of Combinatorics 2 (1981) 373--384.
\end{thebibliography}
\end{document}